\newcommand{\Z}{\mathbb{Z}}
\newcommand{\R}{\mathbb{R}}
\newcommand{\C}{\mathbb{C}}
\newcommand{\CP}{\mathbb{CP}}
\DeclareMathOperator{\Mod}{Mod}
\DeclareMathOperator{\SMod}{SMod}
\DeclareMathOperator{\Diff}{Diff}
\DeclareMathOperator{\Id}{Id}
\DeclareMathOperator{\sign}{sign}
\theoremstyle{definition}
\newtheorem{thm}{Theorem}[section]
\newtheorem{cor}[thm]{Corollary}
\newtheorem{prop}[thm]{Proposition}
\newtheorem{lem}[thm]{Lemma}
\newtheorem{example}[thm]{Example}
\newtheorem{question}[thm]{Question}
\theoremstyle{remark}
\newtheorem{rmk}[thm]{Remark}
\begin{document}
\title{Lefschetz fibrations with infinitely many sections}
\author{Seraphina Eun Bi Lee}
\address{Department of Mathematics, University of Chicago}
\email{seraphinalee@uchicago.edu}
\author{Carlos A. Serv\'an}
\address{Department of Mathematics, University of Chicago}
\email{cmarceloservan@uchicago.edu}
\maketitle
\vspace{-10mm}
\begin{abstract}
The \emph{Arakelov--Parshin} rigidity theorem implies that a holomorphic Lefschetz fibration $\pi: M \to S^2$ of genus $g \geq 2$ admits only finitely many \emph{holomorphic} sections $\sigma:S^2 \to M$. We show that an analogous finiteness theorem does not hold for smooth or for symplectic Lefschetz fibrations. We prove a general criterion for a symplectic Lefschetz fibration to admit infinitely many homologically distinct sections and give many examples satisfying such assumptions. Furthermore, we provide examples that show that finiteness is not necessarily recovered by considering a coarser count of sections up to the action of the (smooth) automorphism group of a Lefschetz fibration.
\end{abstract}

\section{Introduction}

Lefschetz pencils play a prominent role in the study of varieties, arising naturally in the projective setting by intersecting varieties with generic $1$-parameter families of hyperplanes \cite[Chapter 2]{Voisin-HT2}. Ever since Donaldson \cite{donaldson} showed that Lefschetz pencils
generalize to symplectic manifolds and Gompf \cite{gompf-symp} showed that any manifold admitting such a
structure is symplectic, Lefschetz pencils and fibrations have served as a
bridge between symplectic and K\"{a}hler 4-manifolds.
There exist symplectic Lefschetz fibrations that admit no complex structures; see \cite{ozbaci-stipsicz,korkmaz,baykur-noncomplex}. The study of differences between holomorphic and symplectic
Lefschetz fibrations
has been a fruitful area of research (e.g. ~\cite{stipsicz-chern-numbers,endo-nagami,baykur-korkmaz-simone,siebert-tian,stipsicz-indec}). In this paper we continue this study and examine the number of sections of symplectic and smooth $4$-dimensional Lefschetz fibrations. Our results contrast with the holomorphic setting.

\subsection{Counting sections of Lefschetz fibrations}
An important rigidity theorem in the
holomorphic case is the \emph{Arakelov--Parshin rigidity theorem} \cite{arakelov,parshin}, also called the
\emph{Geometric Shafarevich conjecture}, which states that there are only finitely many nontrivial families
of Riemann surfaces of genus $g \geq 2$ over a fixed Riemann surface of finite type.
The extra data of a \emph{section} of a Lefschetz fibration specifies a nontrivial family via the so called \emph{Parshin trick} (see \cref{intro:orbits}).
Thus Geometric Shafarevich implies the following finiteness theorem for holomorphic Lefschetz fibrations, which was originally proven independently by Manin and Grauert before the work of Arakelov and Parshin. See McMullen's survey \cite{mcmullen} for more details.
\begin{thm}[{(Special case of) Geometric Mordell Conjecture \cite{manin,grauert}}] A nontrivial, holomorphic Lefschetz fibration $M \to \CP^1$ of genus $g \geq 2$ admits finitely many holomorphic sections.
\end{thm}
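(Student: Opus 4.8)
The plan is to deduce the statement from the Geometric Shafarevich finiteness theorem — over a fixed Riemann surface of finite type there are only finitely many non-isotrivial families of smooth curves of a given genus $g' \geq 2$ — by way of Parshin's covering trick, as indicated in the introduction. First I would pass to the smooth locus: let $B_0 \subseteq \CP^1$ be the complement of the finitely many critical values of $\pi$, a Riemann surface of finite type, and set $f\colon M_0 := \pi^{-1}(B_0) \to B_0$, a proper holomorphic submersion with fibers $C_b := f^{-1}(b)$ smooth of genus $g$. This family is non-isotrivial: by the nontriviality hypothesis $\pi$ has at least one critical point (a Lefschetz fibration over $\CP^1$ with none is a trivial surface bundle, as $\CP^1$ is simply connected), and the Dehn-twist monodromy around a critical value has infinite order in the mapping class group. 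Since $B_0$ is dense in $\CP^1$, restriction $\sigma \mapsto \sigma|_{B_0}$ is injective on the set of holomorphic sections, so it suffices to show that $f$ has finitely many holomorphic sections.

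The heart of the matter is Parshin's trick, which converts a section into a family of higher-genus curves in a finite-to-one fashion. After a single fixed finite \'etale base change $B_0' \to B_0$ (independent of the section; needed only to rigidify some auxiliary data, such as a theta characteristic or a level-$n$ structure) one attaches to a holomorphic section $\sigma$ the family $\widehat f\colon \widehat M_0 \to B_0'$ whose fiber over $b$ is a connected branched cover $\widehat C_b \to C_b$ of a fixed combinatorial type, with branch locus assembled from the marked point $p_b := \sigma(b)$ together with the fixed auxiliary divisor. (One standard choice, when $\sigma$ avoids the relative Weierstrass divisor $W$, is the maximal abelian cover of exponent $n$ branched over $W_b \cup \{p_b\}$; in general a level-structure variant is used.) The construction is algebraic, so $\widehat f$ is a smooth proper family, and by Riemann--Hurwitz its fibers have genus $g' \geq 2$. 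It is moreover non-isotrivial: were $\widehat M_0$ trivial over a further finite cover, then — the automorphism group of $\widehat C_b$ being finite — the fiberwise action of the deck group of $\widehat C_b \to C_b$ would be locally constant, hence constant, and the quotient family $M_0$ would also be trivial, contradicting non-isotriviality of $f$.

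With this in hand the count is immediate. By Geometric Shafarevich over $B_0'$, the family $\widehat f$ lies in one of finitely many isomorphism classes, a list independent of $\sigma$. Conversely $\sigma$ is recoverable from the isomorphism class of $\widehat f$ up to finite ambiguity: the fiberwise automorphism group $\operatorname{Aut}(\widehat M_0 / B_0')$ is finite (it injects into $\operatorname{Aut}(\widehat C_b)$), so there are only finitely many subgroups that can serve as the deck group; for each such subgroup the quotient is one of finitely many families; and for each identification of such a quotient with $M_0$, the branch locus splits off the intrinsically determined Weierstrass (or auxiliary) part, leaving a multisection with only finitely many irreducible sections, one of which is $\sigma(B_0)$. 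Thus only finitely many sections produce each isomorphism class of $\widehat f$, and $f$ — hence $\pi$ — has only finitely many holomorphic sections.

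The main obstacle is making Parshin's construction genuinely work in families: exhibiting $\widehat f$ as an algebraic family so that its monodromy and the auxiliary base change are controlled and independent of $\sigma$; confirming the fibers have genus $\geq 2$; and, most delicately, verifying both that non-isotriviality is inherited by the covering family and that the original section is reconstructible from that family up to finitely many choices. Granting those inputs, the theorem follows formally from Geometric Shafarevich together with the finiteness of automorphism groups of curves of genus $\geq 2$.
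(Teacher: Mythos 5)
The paper does not prove this theorem itself---it cites Manin and Grauert and, in Section 1.2, sketches exactly the derivation you give: Parshin's covering trick turns a section into a higher-genus family in a finite-to-one way, Geometric Shafarevich bounds the number of such non-isotrivial families over the fixed base, and finiteness of automorphism groups of genus $\geq 2$ curves closes the loop. Your proposal fleshes out that same route (non-isotriviality via the infinite-order Dehn-twist monodromy, injectivity of restriction to the smooth locus, inheritance of non-isotriviality by the cover, and reconstruction of the section up to finite ambiguity) and is consistent with the paper's intended argument.
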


Imayoshi--Shiga \cite{imayoshi-shiga} proved an extension of Geometric Shafarevich to families of finite type. A key fact used in their proof is that the monodromy representation of a nontrivial family is irreducible \cite[Section 3]{mcmullen}. Smith \cite[Proposition 4.2]{smith} later proved that in fact the same holds for the monodromy
representation of a smooth or symplectic Lefschetz fibration over $S^2$. Despite Smith's generalization, we show that Geometric Mordell does not hold for symplectic Lefschetz fibrations, i.e. smooth Lefschetz fibrations $\pi: M^4 \to S^2$ paired with symplectic structures $(M, \omega)$ for which (the smooth loci of) the fibers of $\pi$ are symplectic.
\begin{thm} \label{thm:cor-infty}
    For any $g \geq 2$, there exists a genus-$g$ Lefschetz fibration $\pi: M^4 \to S^2$ that admits infinitely many homologically distinct smooth sections. In other words, there exist sections $\sigma_k: S^2 \to M$ for $k \in \Z$ such that if $i \neq j$ then
  \[
    [\sigma_i(S^2)] \neq [\sigma_j(S^2)] \in H_2(M;\Z).
  \]
  Furthermore, $M$ admits a symplectic structure for which the fibers of $\pi$ and any section $\sigma_k(S^2)$ are all symplectic.
\end{thm}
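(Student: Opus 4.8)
The plan is to split \Cref{thm:cor-infty} into a general criterion and a construction. The criterion I would isolate is: \emph{if $\pi:M\to S^2$ is a symplectic Lefschetz fibration of genus $g$ carrying a fiber-preserving symplectomorphism $\Phi$ (so $\pi\circ\Phi=\bar\Phi\circ\pi$ for some diffeomorphism $\bar\Phi$ of $S^2$) together with a symplectic section $\sigma_0$ whose class has infinite orbit $\{\Phi_*^k[\sigma_0(S^2)]\}_{k\in\Z}$ in $H_2(M;\Z)$, then $M$ admits infinitely many homologically distinct symplectic sections.} Granting the criterion, set $\sigma_k:=\Phi^k\circ\sigma_0\circ\bar\Phi^{-k}$; this is a section whose image is $\Phi^k(\sigma_0(S^2))$, so the classes $[\sigma_k(S^2)]=\Phi_*^k[\sigma_0(S^2)]$ are pairwise distinct. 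Each $\sigma_k(S^2)$ is symplectic because $\Phi^*\omega=\omega$, and in fact a single $\omega$ serves for all $k$ simultaneously: $\Phi^*\omega=\omega$ forces $[\omega]\cdot[\sigma_k(S^2)]=[\omega]\cdot\Phi_*^k[\sigma_0(S^2)]=[\omega]\cdot[\sigma_0(S^2)]>0$. Thus the entire theorem reduces to exhibiting, for each $g\geq 2$, one genus-$g$ Lefschetz fibration satisfying the hypotheses of the criterion.

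Constructing such an example is the heart of the matter, and it is here that one must abandon the holomorphic picture. One cannot take $\Phi$ to act \emph{fiberwise by a fixed mapping class}: for $g\geq 2$ the surface $\Sigma_g$ has finite automorphism group, and by the irreducibility of the monodromy of a Lefschetz fibration over $S^2$ (Smith, discussed above) no essential simple closed curve is monodromy-invariant, so the centralizer of the monodromy in $\Mod(\Sigma_g)$ contains no infinite-order element that would produce such a $\Phi$. Hence $\Phi$ must genuinely exploit the $4$-dimensional topology and will typically lie over a nontrivial $\bar\Phi$. I would build $M$ by an explicit cut-and-paste --- a fiber sum of a standard genus-$g$ holomorphic Lefschetz fibration with auxiliary pieces, or a relative construction from a mapping-class-group factorization --- arranged so that $M$ contains a distinguished configuration supporting a fiber-preserving self-symplectomorphism of infinite order on $H_2$: for example an essential square-zero torus with a geometric dual, along which a Luttinger/knot-surgery-type twist acts by an infinite-order parabolic isometry of the intersection lattice fixing $[F]$ and $[\omega]$; or a chain of Lagrangian spheres lying over arcs in $S^2$, some product of whose Dehn twists spans a degenerate rank-two sublattice on which the induced isometry has infinite order; or a rational-elliptic-surface region whose Mordell--Weil translations extend over the ambient fibration. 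In each model one arranges $\Phi$ to be a symplectomorphism for a Gompf--Thurston form making the fibers symplectic, and then two finite lattice computations remain: that $\Phi_*$ really has infinite order, and that some symplectic section $\sigma_0$ has non-periodic class under $\Phi_*$.

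The main obstacle is therefore producing this configuration \emph{inside a genus-$g$ Lefschetz fibration over $S^2$} while keeping the supporting self-map simultaneously fiber-preserving (so that the $\Phi^k\circ\sigma_0$ are again sections) and symplectic; the tension between fiber-compatibility, infinite homological order, and the rigidity constraints above is the delicate point, and I expect pinning down a model that meets all three is where most of the work goes. Once $\Phi$ and $\sigma_0$ are in hand, the conclusions of \Cref{thm:cor-infty} --- pairwise homological distinctness of the $\sigma_k(S^2)$ and their simultaneous symplecticity --- follow formally, as in the first paragraph. A final point is uniformity in $g$: the auxiliary piece carrying the twist should be chosen independently of $g$ and grafted onto a genus-$g$ building block by an operation (a fiber sum, or a surgery away from the relevant region) that leaves the ambient fiber genus unchanged, so that both the fibration and its infinite family of sections persist for every $g\geq 2$.
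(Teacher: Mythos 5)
Your first paragraph is a correct but essentially contentless reduction: if a fiber-preserving self-symplectomorphism $\Phi$ with infinite homological order on a section class existed, the theorem would follow formally. The genuine gap is that you never produce such a $\Phi$, and you yourself flag the construction as ``where most of the work goes'' while offering only a menu of unverified mechanisms (Luttinger-type twists along square-zero tori, products of Lagrangian Dehn twists, Mordell--Weil translations). None of these is shown to (i) preserve the fibration $\pi$, (ii) preserve a Gompf--Thurston form, (iii) act with infinite order on $H_2$, or (iv) move a section's class non-periodically, and arranging all four at once is exactly the hard part. There is also a structural obstruction to your framing that you do not address: a fiber-preserving twist supported over an annulus $A\subseteq S^2$ and covering $\Id_{S^2}$ would be built from a loop in $\Diff(\Sigma_g)$ based at the identity, but $\Diff_0(\Sigma_g)$ is contractible for $g\geq 2$, so such a twist is isotopic to the identity; to get something nontrivial one must instead end the twist at a point-push $P_\gamma$ and absorb it into the other half of the fibration, which requires a compatibility condition on the monodromy that your proposal never formulates. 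Indeed, the paper's Section \ref{sec:orbits} exhibits examples where the analogous sections are pairwise non-isomorphic, so no single global self-diffeomorphism $\Phi$ can generate them; your criterion is strictly narrower than what is needed in general.

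The paper's route avoids global self-maps entirely. It proves a checkable criterion (\cref{thm:infinite-sections}) on a monodromy factorization of $(\pi,s)$ in $\Mod(\Sigma_{g,1})$: one needs a subword whose product fixes a based loop $\gamma\in\pi_1(\Sigma_g,p)$ and a curve $\delta$ meeting $\gamma$ once whose homology class lies in the span of the vanishing cycles on \emph{both} sides of the subword. The sections $\sigma_k$ are then built directly (\cref{sec:construction}) by re-gluing $X_1\cup X_2$ along the $k$-th power of a fiberwise point-push $p_\gamma$ over an annulus, and homological distinctness is detected by intersecting the tori $[\sigma_k(S^2)]-[\sigma_0(S^2)]$ with a closed surface $T$ obtained by capping $\delta$ off in each half (\cref{lem:key-comp}, \cref{lem:intersection}); the symplectic statement is a localized Gompf--Thurston argument. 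The concrete example for each $g\geq2$ is then completely explicit: the untwisted fiber sum with itself of the hyperelliptic fibration given by the $(2g)$-chain relation $(T_{c_1}\cdots T_{c_{2g}})^{4g+2}=1$, where the monodromy over the gluing circle is trivial so any $\gamma$ works, and $\delta$ is taken to be a nonseparating vanishing cycle. To repair your argument you would need to supply an example of comparable concreteness and verify the four properties above; as written, the proof of the theorem has not been given.
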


We will deduce the existence of infinitely many symplectic sections from a more general criterion (Theorem \ref{thm:infinite-sections}) stated in \Cref{sec:lefschetz}.
In Section \ref{sec:construction} we describe a general procedure to smoothly construct these sections.
In \Cref{sec:symp}, we adapt the Gompf--Thurston construction to endow any Lefschetz fibration $\pi: M \to S^2$ with infinitely many sections $\sigma_k: S^2 \to M$ as constructed in \Cref{sec:construction}
with a symplectic form $\omega$ such that $\sigma_k(S^2)$ is a symplectic submanifold of $(M, \omega)$ for all $k\in \Z$.

\begin{rmk} It was claimed (\cite[Corollary 1.4, Theorem 5.1]{smith}) that Geometric Mordell also holds in the smooth category up to homotopy, i.e. that a Lefschetz fibration of genus $g \geq 2$ admits finitely many homotopy classes of smooth sections. There is a gap in the argument, as it claims (\cite[Line 12 of the proof of Theorem 5.1]{smith}) that the number of homotopy classes of sections is bounded above by the number of components of the complement of a fixed set of curves in minimal position in $\Sigma_{g,1}$ representing the vanishing cycles in a generic fiber. However, this presumes that \emph{any} lift of the monodromy representation from $\Mod(\Sigma_g)$ to $\Mod(\Sigma_{g,1})$ (where the marked point corresponds to the section in a regular fiber) is defined by Dehn twists about this fixed set of curves, which is not necessarily true (cf. Theorem \ref{thm:infinite-sections}\ref{thm:monodromy-factorization}).

On the other hand, Hayano \cite[Theorem 1.2]{hayano} showed that Geometric Mordell
does not hold in the more general setting of broken Lefschetz fibrations.
\end{rmk}

In Sections \ref{sec:apps} and \ref{sec:orbits} we apply Theorem \ref{thm:infinite-sections} to obtain many examples of Lefschetz fibrations admitting infinitely many homologically distinct sections. For example, the following corollary supplies many decomposable examples of such fibrations.
\begin{cor}[Decomposable examples]\label{cor:intro-untwisted}
  Let $\pi: M \to S^2$ be a nontrivial Lefschetz fibration of genus $g \geq 2$ that admits a section. The untwisted fiber sum $\pi \#_F \pi: M \#_FM \to S^2$ admits infinitely many homologically distinct smooth sections.
  Furthermore, $M\#_F M$ admits a symplectic structure for which the fibers of $\pi$ and infinitely many sections are all symplectic.
\end{cor}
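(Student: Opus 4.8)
The plan is to apply \Cref{thm:infinite-sections} to the untwisted fiber sum $\pi \#_F \pi$, so the work is to supply the data that \Cref{thm:infinite-sections} asks for and then to check that the resulting sections are pairwise homologically distinct. Fix a section $\sigma_0$ of $\pi$ and a regular fiber $F \cong \Sigma_g$. Since $\sigma_0$ marks a point of $F$, the monodromy of $\pi$ lifts to a factorization $t_{\tilde c_1}\cdots t_{\tilde c_n} = 1$ in $\Mod(\Sigma_{g,1})$, where $\Sigma_{g,1}$ is $F$ with the marked point and $\tilde c_i$ is a lift of the $i$-th vanishing cycle $c_i$. The untwisted fiber sum glues two copies of $\pi$ along $F$ by a fiberwise diffeomorphism fixing the marked point, so $\pi \#_F \pi$ inherits the section $\sigma_0 \# \sigma_0$ and its lifted monodromy factorization in $\Mod(\Sigma_{g,1})$ is the concatenation $t_{\tilde c_1}\cdots t_{\tilde c_n}\, t_{\tilde c_1}\cdots t_{\tilde c_n} = 1$.

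The first step is an algebraic observation. Because the first block $t_{\tilde c_1}\cdots t_{\tilde c_n}$ already equals the identity in $\Mod(\Sigma_{g,1})$ --- not merely in $\Mod(\Sigma_g)$ --- for every loop $\gamma$ in $F$ based at the marked point and every $k \in \Z$ the word
\[
  t_{\tilde c_1}\cdots t_{\tilde c_n}\; t_{P_\gamma^{k} \tilde c_1}\cdots t_{P_\gamma^{k} \tilde c_n}
\]
also equals the identity, where $P_\gamma \in \Mod(\Sigma_{g,1})$ is the point-push along $\gamma$; indeed the second block is $P_\gamma^{k}(t_{\tilde c_1}\cdots t_{\tilde c_n})P_\gamma^{-k} = 1$. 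Since $P_\gamma$ lies in the point-pushing subgroup $\pi_1(\Sigma_g) \leq \Mod(\Sigma_{g,1})$, all of these words are lifts of the \emph{same} factorization of the identity in $\Mod(\Sigma_g)$, so they present $\pi \#_F \pi$ but with varying sections; geometrically, $\sigma_0 \# \sigma_0$ is modified only over an annulus in $S^2$ containing no critical values, by dragging the marked point around $\gamma^{k}$. This is exactly the phenomenon flagged in the Remark preceding the corollary --- the lift associated to the $k$-th section is by Dehn twists about the curves $P_\gamma^{k}\tilde c_i$, which for $k \neq 0$ are not isotopic in $\Sigma_{g,1}$ to the $\tilde c_i$ --- and it furnishes the input \Cref{thm:infinite-sections} requires, producing sections $\sigma_k$ of $\pi \#_F \pi$ for $k \in \Z$ together with the relation $[\sigma_k(S^2)] - [\sigma_0(S^2)] = \pm k\,[R_\gamma] \in H_2(M \#_F M;\Z)$, where $R_\gamma$ is the rim torus $\gamma \times C$ and $C \subset S^2$ is the circle along which the summing is performed (cf.\ Theorem~\ref{thm:infinite-sections}\ref{thm:monodromy-factorization}).

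It remains to choose $\gamma$ so that $[R_\gamma]$ has infinite order, and this is where nontriviality of $\pi$ is used: it guarantees that the span $V \subseteq H_1(\Sigma_g;\Q)$ of the vanishing cycle classes of $\pi$ is nonzero, i.e.\ that $\pi$ has a nonseparating vanishing cycle. Choose a primitive class $v \in V \cap H_1(\Sigma_g;\Z)$, a nonseparating simple closed curve $\gamma \subset F$ with $\langle [\gamma], v\rangle = 1$, and a simple closed curve $\gamma^* \subset F$ with $[\gamma^*] = v$ meeting $\gamma$ exactly once. Since $\pi$ has a section, the meridian of $F$ bounds a disk in $M \setminus \nu F$ (the complement of a small disk in the section sphere $\sigma_0(S^2)$), so $H_1(M \setminus \nu F;\Q) \cong H_1(M;\Q) \cong H_1(\Sigma_g;\Q)/V$; as $[\gamma^*] = v \in V$, the class of $\gamma^* \times \{\mathrm{pt}\}$ is torsion in $H_1(M \setminus \nu F;\Z)$, so some multiple $m\,(\gamma^* \times \{\mathrm{pt}\})$ bounds a compact surface in each copy of $M \setminus \nu F$. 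Gluing two such surfaces $S_1, S_2$ along their common boundary in $F \times C$ produces a $2$-cycle $\Sigma^* \subset M \#_F M$, and a local intersection count in a collar of $F \times C$ gives $\Sigma^* \cdot R_\gamma = \pm m \neq 0$, since $\gamma^*$ meets $\gamma$ once. Hence $[R_\gamma]$ is non-torsion, the classes $[\sigma_k(S^2)]$ are pairwise distinct, and \Cref{thm:infinite-sections} applies, yielding both the infinitely many homologically distinct sections and --- via the Gompf--Thurston construction of \Cref{sec:symp} applied to $\pi \#_F \pi$ with the $\sigma_k$ --- a symplectic form for which the fibers of $\pi$ and infinitely many of the $\sigma_k$ are symplectic.

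The main obstacle is this last point: controlling $H_2(M \#_F M)$ enough to certify that the rim torus $R_\gamma$ has infinite order, by exhibiting the dual surface $\Sigma^*$. By contrast the doubling-and-conjugation observation, and the bookkeeping needed to feed it into \Cref{thm:infinite-sections}, are routine, and the symplectic conclusion is quoted. Two further points deserve care: the input that a nontrivial Lefschetz fibration over $S^2$ admits a nonseparating vanishing cycle (so that $V \neq 0$); and the orientation and transversality verification behind $\Sigma^* \cdot R_\gamma = \pm m$, including checking that $\Sigma^*$ is a genuine cycle once the boundary orientations of $S_1$ and $S_2$ are matched across the fiber sum.
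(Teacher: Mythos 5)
Your proposal is correct in substance and follows the same overall strategy as the paper: double the monodromy factorization of $(\pi,s)$ in $\Mod(\Sigma_{g,1})$, observe that the first block is already the identity there so that every loop $\gamma$ is fixed by it, and feed this into \Cref{thm:infinite-sections}. Where you diverge is in how you certify that the sections are homologically distinct. The paper's proof (Corollary \ref{cor:untwisted} and the remark after it) is shorter: it invokes Stipsicz's theorem \cite[Theorem 1.3]{stipsicz-vanishing-cycles} to get a nonseparating vanishing cycle $\ell_i$, takes $\delta = \ell_i$ (which trivially lies in the integral span of the vanishing cycles of \emph{both} blocks, since the blocks are identical), takes $\gamma$ dual to $\delta$ by change of coordinates, and then simply cites \Cref{thm:infinite-sections}\ref{thm:homology-distinct}. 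You instead re-derive part \ref{thm:homology-distinct} by hand: you identify $[\sigma_k(S^2)]-[\sigma_0(S^2)]$ with $\pm k$ times the rim torus $[R_\gamma]$ and build a dual surface $\Sigma^*$ by capping off a multiple of a curve $\gamma^*$ in each copy of $M-\nu F$. This is essentially the paper's own internal proof of \Cref{thm:infinite-sections}\ref{thm:homology-distinct} (the surface $T=T_1\cup T_2$ of \Cref{sec:main-proof} and the intersection count of Lemma \ref{lem:intersection}), specialized to the untwisted fiber sum, so the work is redundant but sound. Three small remarks: (i) your $\gamma^*$ is only required to have class in the \emph{rational} span $V$ of the vanishing cycles, so \Cref{thm:infinite-sections} as stated would not literally apply with $\delta=\gamma^*$; since you prove distinctness directly this does no harm, but the cleanest choice is $\gamma^*=\ell_i$ itself with $m=1$. (ii) The claim that nontriviality forces $V\neq 0$ is exactly Stipsicz's theorem on the existence of a nonseparating vanishing cycle, which you correctly flag as a needed input but should cite. (iii) The identity $[\sigma_k(S^2)]-[\sigma_0(S^2)]=\pm k[R_\gamma]$ is true but deserves the short justification that the difference torus $\sigma_k(A)\cup\overline{\sigma_0(A)}$ lies in $\pi^{-1}(\overline A)\cong \Sigma_g\times S^1\times[0,1]$ and wraps $k$ times around $\gamma$ in the fiber direction.
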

\Cref{cor:trivial-homology} shows that the same holds for fiber sums $M_1 \#_{F,\psi}M_2$ if each factor has trivial first homology. On the other hand, the following corollary shows that our results are not restricted to decomposable fibrations.
\begin{cor}[Indecomposable examples]\label{cor:intro-indec}
For every $g \geq 2$, there exists a genus-$g$, fiber sum indecomposable, symplectic Lefschetz fibration that admits infinitely many homologically distinct symplectic sections.
\end{cor}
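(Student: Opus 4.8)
To prove \Cref{cor:intro-indec} it is enough to exhibit, for each $g \geq 2$, a single genus-$g$ symplectic Lefschetz fibration $\pi \colon M \to S^2$ that satisfies the hypotheses of \Cref{thm:infinite-sections} and is fiber sum indecomposable: \Cref{thm:infinite-sections} then supplies the infinitely many homologically distinct sections, and the Gompf--Thurston argument of \Cref{sec:symp} upgrades infinitely many of them to symplectic sections for a suitable $\omega$. The decomposable examples of \Cref{cor:intro-untwisted} are of no use here, since an untwisted fiber sum is fiber sum decomposable by construction. The plan is instead to start from a genus-$g$ Lefschetz fibration $W \to S^2$ that admits a section and whose monodromy factorization already witnesses the hypotheses of \Cref{thm:infinite-sections}\ref{thm:monodromy-factorization} --- for instance the building block used to prove \Cref{thm:cor-infty} --- and then perturb $W$ into an indecomposable fibration \emph{without} disturbing the part of the monodromy factorization responsible for the sections.

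For the perturbation I would apply a substitution to the monodromy word of $W$ that is supported on a sub-relation disjoint from the vanishing cycles and the distinguished lift to $\Mod(\Sigma_{g,1})$ encoding the hypotheses of \Cref{thm:infinite-sections} --- e.g. a lantern substitution, or the insertion of a cancelling pair of Dehn twists followed by a braiding of nearby critical values. Because the criterion of \Cref{thm:infinite-sections} constrains only a sub-word of the factorization together with a single lift --- this is exactly the point of the Remark correcting \cite[Theorem~5.1]{smith} --- the perturbed fibration still satisfies the hypotheses and hence still has infinitely many homologically distinct symplectic sections. To force fiber sum indecomposability I would control the numerical invariants: since $b_1 = 0$ for any nontrivial Lefschetz fibration over $S^2$ by irreducibility of the monodromy \cite[Proposition~4.2]{smith}, a splitting $M = M_1 \#_F M_2$ into genus-$g$ Lefschetz fibrations over $S^2$ each with at least one critical point forces, by additivity of the Euler characteristic and Novikov additivity of the signature,
\[
e(M) = e(M_1) + e(M_2) + 4g - 4, \qquad \sign(M) = \sign(M_1) + \sign(M_2),
\]
while each factor $M_i$ is itself constrained (it has a positive number $c_i$ of vanishing cycles with $\lvert \sign(M_i)\rvert \le c_i$, of which at least $c(g) > 0$ may be taken nonseparating, and $b_1(M_i) = 0$). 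I would choose $W$ and the substitution so that $\bigl(e(M), \sign(M)\bigr)$ --- and, if this is not enough, the isometry type of $\bigl(H_2(M;\Z), Q_M\bigr)$ together with the classes $[F]$ and $[\sigma_0(S^2)]$ --- admits no such splitting; in any residual cases I would instead invoke an existing indecomposability criterion in the spirit of \cite{stipsicz-indec}.

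The main obstacle is precisely the reconciliation of these two features. Fiber sum indecomposability is a \emph{global} primality property of the entire monodromy factorization, whereas the criterion of \Cref{thm:infinite-sections} demands enough flexibility in lifting that factorization to $\Mod(\Sigma_{g,1})$ that one might worry every sufficiently flexible factorization decomposes. The resolution is that the two properties are supported on disjoint portions of the monodromy word, so the real work is bookkeeping: certifying simultaneously that the section-producing sub-word and its chosen lift survive the substitution verbatim, and that the resulting $4$-manifold's invariants (or the constraints above) genuinely obstruct every genus-$g$ fiber sum decomposition over $S^2$. A secondary, routine point --- that the symplectic form produced in \Cref{sec:symp} still makes infinitely many of the $\sigma_k$ symplectic after the substitution --- is automatic, since that construction uses only the Lefschetz fibration structure and the sections, not the particular positive factorization.
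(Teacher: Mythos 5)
Your high-level architecture --- exhibit one fibration per genus satisfying the hypotheses of \Cref{thm:infinite-sections}, certify indecomposability separately, and note that the symplectic upgrade from \Cref{sec:symp} is automatic --- matches the paper's, and your observation that \Cref{cor:intro-untwisted} cannot help is correct. The gap is in the indecomposability step, which is where all the content of the corollary lives. Your primary plan is to choose the monodromy word so that $\bigl(e(M),\sign(M)\bigr)$, possibly refined by the intersection form and the classes $[F]$ and $[\sigma_0(S^2)]$, admits no splitting compatible with additivity. This is not a workable criterion: for a genus-$g$ fibration with many critical points the Diophantine constraints you list admit a large set of candidate pairs $(e_i,\sign_i)$, and excluding each one would require knowing which pairs are realized by genus-$g$ Lefschetz fibrations over $S^2$ with a section --- which is out of reach. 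Your fallback, ``invoke an existing indecomposability criterion in the spirit of \cite{stipsicz-indec},'' is precisely the missing idea, left unidentified. (A secondary bookkeeping issue: the hypothesis on $\delta$ in \Cref{thm:infinite-sections} constrains \emph{both} subwords, since $[\delta]$ must lie in the span of the vanishing cycles of each; a substitution ``supported away from'' the distinguished subword can still change the homological span of the complementary one.)

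The paper's mechanism is concrete and quite different in its target: it massages the $(2g)$-chain relation, via $3$- and $(2g-2)$-chain-relation substitutions, into a positive factorization of $T_d^2\in\Mod(\Sigma_g^1)$ containing the subword $(T_{c_{2g+1}'}T_{c_{2g}}T_{c_{2g+1}})^4=T_{d_3}T_d$, which visibly fixes a loop $\gamma$ disjoint from $d_3$ and meets the hypotheses with $\delta=c_{2g}$. It then computes $\chi(M_g)=12g+2$ and, using Endo's hyperelliptic signature formula \cite{endo} together with the Endo--Nagami signature of the substitution \cite{endo-nagami}, $\sigma(M_g)=-8g-2$. Hence $c_1^2(M_g)=2\chi+3\sigma=-2<0$ while $b^+(M_g)\geq 2g-1\geq 2$, so Taubes \cite{taubes} produces an embedded symplectic $(-1)$-sphere, and Usher's theorem \cite{usher} that such non-minimal Lefschetz fibrations cannot be fiber sums gives indecomposability. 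In short, the engineering target is \emph{non-minimality} (forced by making $c_1^2$ negative), not a direct exclusion of all splittings by numerical invariants; since your proposal never identifies this mechanism, the key step of the proof is missing.
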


There also exist many holomorphic Lefschetz fibrations to which Theorem \ref{thm:infinite-sections} applies. For example, if $\pi: M \to S^2$ is a holomorphic Lefschetz fibration of genus $g \geq 2$ then the untwisted fiber sum $\pi \#_F \pi: M \#_FM \to S^2$ also admits a holomorphic structure. Thus, coupling \Cref{cor:intro-untwisted} with Geometric Mordell gives the following corollary.
\begin{cor}
  Let $\pi: M \to \CP^1$ be any nontrivial, holomorphic Lefschetz fibration of genus $g \geq 2$ that admits a section. For any holomorphic structure of the untwisted fiber sum $\pi \#_F \pi: M \#_F M \to \CP^1$, there exist infinitely many homologically distinct smooth sections that are not homologous to any holomorphic section.
\end{cor}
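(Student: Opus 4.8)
The plan is to combine Corollary \ref{cor:intro-untwisted} with the Geometric Mordell Conjecture. First, observe that the fiber sum operation is compatible with holomorphic structures: if $\pi: M \to \CP^1$ is a nontrivial holomorphic Lefschetz fibration of genus $g \geq 2$, then the untwisted fiber sum $\pi \#_F \pi: M \#_F M \to \CP^1$ admits holomorphic structures — indeed, one can perform the fiber sum holomorphically by gluing along a regular fiber (which is a genuine smooth complex curve) and using the $\mathbb{C}^\times$-action on a punctured neighborhood, a standard construction (see e.g. the references to holomorphic fiber sums cited in the introduction). Fix any such holomorphic structure. The fibration remains nontrivial after fiber summing, since its monodromy factorization strictly contains that of $\pi$.

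Next, apply Geometric Mordell (the Manin--Grauert theorem quoted above) to $\pi \#_F \pi$ in its chosen holomorphic structure: it admits only finitely many holomorphic sections. On the other hand, Corollary \ref{cor:intro-untwisted} produces infinitely many homologically distinct smooth sections $\sigma_k: \CP^1 \to M \#_F M$, $k \in \Z$, with $[\sigma_i(\CP^1)] \neq [\sigma_j(\CP^1)]$ in $H_2(M \#_F M; \Z)$ for $i \neq j$. Since the holomorphic sections are finite in number, they represent only finitely many homology classes. Therefore, after discarding the at most finitely many indices $k$ for which $[\sigma_k(\CP^1)]$ agrees with the class of some holomorphic section, infinitely many of the $\sigma_k$ remain, and each of these is homologically distinct from every holomorphic section. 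In particular none of them is homologous to any holomorphic section, which is the desired conclusion.

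The only point requiring care — and the one I would expect to be the main obstacle — is verifying that the untwisted fiber sum of a holomorphic Lefschetz fibration indeed carries a holomorphic structure, and more precisely that the smooth sections of Corollary \ref{cor:intro-untwisted} still exist after we have fixed such a holomorphic structure. The latter is immediate since Corollary \ref{cor:intro-untwisted} is a statement about the underlying smooth Lefschetz fibration, which does not depend on the complex structure; the former is the classical fact that fiber summing complex surfaces along a common fiber of a holomorphic fibration can be done in the holomorphic category (this is precisely the kind of holomorphic fiber sum that appears throughout the literature on holomorphic Lefschetz fibrations, e.g. \cite{stipsicz-chern-numbers,endo-nagami}). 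Once this is granted, the argument is a direct pigeonhole comparison between the finite holomorphic count and the infinite smooth count. I would therefore keep the proof short, remarking only that the existence of holomorphic structures on $\pi \#_F \pi$ is standard and citing the relevant construction, then invoking Corollary \ref{cor:intro-untwisted} and Geometric Mordell as above.
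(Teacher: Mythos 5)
Your proposal is correct and follows exactly the paper's (very brief) argument: the untwisted fiber sum of a holomorphic Lefschetz fibration admits a holomorphic structure, Geometric Mordell bounds the holomorphic sections (hence their homology classes) by a finite number, and Corollary \ref{cor:intro-untwisted} supplies infinitely many pairwise homologically distinct smooth sections, so all but finitely many of them avoid every holomorphic section's class. The extra care you take with the pigeonhole step and with the independence of the smooth construction from the chosen holomorphic structure is consistent with, and slightly more explicit than, what the paper records.
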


The construction of the sections given by Theorem \ref{thm:infinite-sections} requires the existence of at least one section. It is unknown whether every Lefschetz fibration admits a section. Our work thus prompts the following question.

\begin{question}
  Are there Lefschetz fibrations $\pi: M \to S^2$ with a finite, positive number of homotopically distinct smooth sections? If so, is there an effective bound on the number of sections?
\end{question}

Finally, we note that the existence of infinitely many homologically distinct symplectic sections generalizes to certain Lefschetz fibrations $\pi: M \to B$ over arbitrary surfaces $B$, including some surface bundles over surfaces, because the construction of the sections in Section \ref{sec:construction} is local. See Remark \ref{rmk:surface-bundle} for more details.

\subsection{Isomorphism classes of sections and the Parshin trick}\label{intro:orbits}

The \emph{Parshin trick} is another input to the proof of Geometric Mordell in the holomorphic category. It relates a holomorphic section $s:B\to C$ of a family $C \to B$ with fibers of genus $g$ to a family $D \to B$ with fibers of higher genus. Parshin's (and Kodaira's \cite{kodaira}) idea is
to use the section to create a branched cover $D \to C$ branched over $s(B)$ and form a family $D \to C \to  B$. For more details, see \cite[Section 4]{mcmullen}.

\begin{thm}[{Parshin trick \cite{parshin}}]
  Given a genus $g \geq 1$ and a base $B$ of finite type, there exists a genus $h \geq 2$ and a finite-to-one map
  \[ \left\{ \parbox{18em}{\centering Families $C \to B$ with fibers of genus $g$, equipped with sections $s:B \to C$} \right\} \biggr / \sim \,\,  \to \left\{ \parbox{10em}{\centering Families $D \to B$ with fibers of genus $h$}\right\} \biggr / \sim.
  \]
  Here, two families $\pi_1: C_1 \to B$ and $\pi_2: C_2 \to B$ are \emph{equivalent} if there is an isomorphism $f: C_1 \to C_2$ lying over the identity of $B$, i.e. $\pi_1 = \pi_2 \circ f$. Two families with sections $(C_1, s_1)$ and $(C_2, s_2)$ are equivalent if there is an isomorphism $C_1 \cong C_2$ of families  over $B$ sending $s_1$ to $s_2$.
\end{thm}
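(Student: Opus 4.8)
The plan is to run the classical Kodaira--Parshin construction in families: use the section $s$ to build a fiberwise branched cover $D \to C$, branched exactly over $s(B)$, and output the composite $D \to C \to B$. Fix an integer $n \geq 2$ and set $\Gamma = \pi_1(\Sigma_g \setminus \{\mathrm{pt}\}) \cong F_{2g}$, with distinguished boundary element $\delta = [a_1,b_1]\cdots[a_g,b_g]$ (for a symplectic generating set). Let $K = \gamma_3(\Gamma)\cdot \Gamma^n \triangleleft \Gamma$, a finite-index \emph{characteristic} subgroup; in $A := \Gamma/K$ the element $\delta$ lands in the center, where it equals the image of the symplectic form $\sum_i \bar a_i \wedge \bar b_i \in \wedge^2((\Z/n)^{2g})$ and hence has order exactly $n$ (here one uses that $\sum_i \bar a_i\wedge \bar b_i$ is primitive). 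Given $(C/B, s)$, over a regular fiber $C_b$ the connected branched cover $D_b \to C_b$ determined by the surjection $\pi_1(C_b \setminus \{s(b)\}) \cong \Gamma \twoheadrightarrow A$ is branched precisely over $s(b)$, with every ramification index equal to $n$; Riemann--Hurwitz gives
\[
2h - 2 \;=\; |A|\,(2g-2) + \frac{|A|}{n}(n-1) \;=\; |A|\Bigl(2g - 1 - \tfrac1n\Bigr),
\]
which depends only on $g$ and $n$, and is $> 0$ with $h \geq 2$ since $g \geq 1$ and $n \geq 2$. Since $K$ is characteristic it is preserved by the monodromy $\pi_1(B) \to \mathrm{Out}(\Gamma)$ of $C/B$, so the fiberwise covers glue to a single \'etale cover $D^\circ \to C \setminus s(B)$, and taking the fiberwise completion over $s(B)$ produces $\pi_D \colon D \to B$.

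If $C \to B$ is a Lefschetz fibration rather than a smooth family, one must further check that $D \to B$ is still fibered in curves of (arithmetic) genus $h$ near the critical values. Over a node the local model of $C \to B$ is $\{xy = t\}$, which misses $s(B)$; provided the covering extends across the node --- which it does once the vanishing cycles lie in $K$ (arrange this by enlarging $K$, or, in Parshin's original approach, by first replacing $B$ with one of finitely many base changes of bounded degree on which the construction globalizes) --- the local picture upstairs is a trivial cover of $\{xy=t\}$, so $D \to B$ acquires only new nodes and remains a Lefschetz fibration of genus $h$. In any case the assignment $(C/B, s) \mapsto D/B$ is visibly functorial for isomorphisms over $B$, hence descends to a well-defined map on isomorphism classes, and $D$ inherits the holomorphic, symplectic, or smooth structure carried by $C$.

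For the finite-to-one property I would reconstruct $(C,s)$ from $D$. By construction $D \to C$ is Galois with the \emph{fixed} finite group $A$, acting fiberwise and commuting with the projections, so $A$ embeds in $\mathrm{Aut}(D/B)$, and $C = D/A$ while $s(B)$ is the image of the ramification locus of $D \to C$. Now $\mathrm{Aut}(D/B)$ is finite: restriction to a regular fiber $D_{b_0}$ is injective (an automorphism trivial on $D_{b_0}$ is trivial on all nearby regular fibers, since their automorphism groups are finite and vary continuously, hence is trivial by connectedness of $D$), and $|\mathrm{Aut}(D_{b_0})| \leq 84(h-1)$ by Hurwitz. Therefore there are only finitely many subgroups $A' \leq \mathrm{Aut}(D/B)$, hence finitely many quotient families $D/A' \to B$, and for each only finitely many candidate sections --- namely the components of the ramification divisor of $D \to D/A'$ that happen to be sections. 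The preimage of $[D/B]$ under the map is contained in this finite set, so the map is finite-to-one.

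The step I expect to be the main obstacle is the in-families globalization: making the fiberwise covering compatible with the monodromy of $C/B$ (the reason for choosing $K$ characteristic) and, when $C \to B$ has singular fibers, ensuring the covering extends across them without creating worse-than-nodal singularities (the reason one may need to arrange the vanishing cycles to lie in $K$, at the cost of a controlled base change of $B$ as in Parshin's original argument). By contrast the Riemann--Hurwitz genus count, the functoriality, and the reconstruction argument for finiteness are essentially routine.
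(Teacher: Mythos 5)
The paper does not prove this theorem; it is quoted as background (attributed to Parshin, with the idea credited to Kodaira and Parshin and details deferred to McMullen's survey), and the only hint given is exactly the strategy you adopt: build a branched cover $D \to C$ branched over $s(B)$ and compose with $C \to B$. So your proposal is the standard Kodaira--Parshin argument, and its main ingredients are sound: the characteristic subgroup $K = \gamma_3(\Gamma)\Gamma^n$ does force the boundary word $\delta$ to have order exactly $n$ in $A = \Gamma/K$ (its image is the primitive vector $\sum_i \bar a_i \wedge \bar b_i$ in the central subgroup $[A,A]$), the Riemann--Hurwitz count is right and depends only on $(g,n)$, and the finiteness argument via $A \hookrightarrow \mathrm{Aut}(D/B)$ together with the Hurwitz bound $|\mathrm{Aut}(D_{b_0})| \le 84(h-1)$ is the standard reason the map is finite-to-one (this is the same finiteness of automorphism groups the paper invokes when it says the Parshin trick plus Geometric Shafarevich yields Geometric Mordell).

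The one step that is genuinely incomplete as written --- and which you correctly flag --- is the globalization. ``$K$ is characteristic, so the fiberwise covers glue'' is too quick: characteristicity only guarantees that the monodromy preserves the \emph{isomorphism class} of the fiberwise cover. To assemble these into an honest finite cover $D^\circ \to C \setminus s(B)$ over the regular locus you must produce a finite-index subgroup $H \le \pi_1(C \setminus s(B))$ with $H \cap \Gamma = K$, i.e.\ solve the extension problem for $1 \to A \to \pi_1(C\setminus s(B))/K \to \pi_1(B^\circ) \to 1$. This is automatic when $\pi_1(B^\circ)$ is free (cohomological dimension one, so the extension splits) --- which covers the case relevant to the paper, $B = \CP^1$ with punctures at the critical values --- but in general it is precisely where Parshin's finite base change of bounded degree enters, and your treatment of the extension across nodal fibers (vanishing cycles in $K$) is likewise a genuine condition to arrange rather than a formality. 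With that caveat made explicit, the proposal matches the intended proof.
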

The Parshin trick, in conjunction with Geometric Shafarevich, implies Geometric Mordell in the case of fiber genus $g \geq 2$ because the automorphism group of a family with fiber genus $g \geq 2$ is finite. In light of the Parshin trick, we incorporate the action of the smooth automorphism group of a Lefschetz fibration $\pi: M \to S^2$ in our count of its sections. For two sections $s_1, s_2$ of $\pi$, the pairs $(\pi, s_1)$ and $(\pi, s_2)$ are \emph{isomorphic} if there exist orientation-preserving diffeomorphisms $\Psi: M \to M$ and $\psi: S^2 \to S^2$ such that the following diagram commutes:
\begin{center}
\begin{tikzcd}
M \arrow[r, "\Psi"] \arrow[d, "\pi"]              & M \arrow[d, "\pi"']               \\
S^2 \arrow[r, "\psi"] \arrow[u, "s_1", bend left] & S^2 \arrow[u, "s_2"', bend right]
\end{tikzcd}
\end{center}
\noindent
The pair $(\Psi, \psi)$ is an \emph{automorphism} of the Lefschetz fibration $\pi: M \to S^2$.

In contrast to the holomorphic setting, we show through explicit examples that there is no finiteness result even for the number of sections up to isomorphism (not necessarily lying over the identity) in the smooth setting.
\begin{thm}\label{thm:intro-non-isomorphic}
For any $g \geq 2$, there exists a genus-$g$ Lefschetz fibration $\pi: M \to S^2$ with infinitely many homologically distinct smooth sections that are pairwise non-isomorphic.
\end{thm}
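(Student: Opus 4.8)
The plan is to feed an appropriately chosen Lefschetz fibration into the construction of \Cref{sec:construction} and then separate the resulting sections up to isomorphism using self-intersection numbers. The key reduction is this: if $(\Psi,\psi)$ is an isomorphism from $(\pi,\sigma_i)$ to $(\pi,\sigma_j)$, then $\Psi\colon M\to M$ is an orientation-preserving diffeomorphism with $\Psi(\sigma_i(S^2))=\sigma_j(S^2)$, so $\Psi_\ast$ is an isometry of $(H_2(M;\Z),\cdot)$ carrying $[\sigma_i(S^2)]$ to $[\sigma_j(S^2)]$; in particular $[\sigma_i(S^2)]^2=[\sigma_j(S^2)]^2$. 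Hence it suffices to exhibit, for each $g\geq 2$, a genus-$g$ Lefschetz fibration $\pi\colon M\to S^2$ together with sections $\sigma_k$ ($k\in\Z$) for which $k\mapsto [\sigma_k(S^2)]^2$ takes infinitely many values. Passing to an infinite subset of indices on which this function is injective then gives sections that are pairwise homologically distinct (distinct squares force distinct classes) and, by the above, pairwise non-isomorphic.

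To produce such a $\pi$, I would run the construction of \Cref{thm:infinite-sections} on a fibration chosen so that its output sections differ by a homology class of nonzero square. One checks from the construction that $[\sigma_k(S^2)]-[\sigma_0(S^2)]=k\alpha$ for a fixed class $\alpha\in H_2(M;\Z)$ with $\alpha\cdot[F]=0$ (each $\sigma_k$ being a section), whence
\[
  [\sigma_k(S^2)]^2=[\sigma_0(S^2)]^2+2k\bigl([\sigma_0(S^2)]\cdot\alpha\bigr)+k^2\alpha^2 ,
\]
a non-constant function of $k$ as soon as $\alpha^2\neq 0$ (or even just $[\sigma_0(S^2)]\cdot\alpha\neq 0$). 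I expect one can force $\alpha^2\neq 0$ by applying \Cref{cor:intro-untwisted} to a fibration $\pi'$ that admits a section of nonzero self-intersection, arranging the section-modification so that $\sigma_0(S^2)$ and $\sigma_1(S^2)$ are disjoint; then $\alpha^2=[\sigma_0(S^2)]^2+[\sigma_1(S^2)]^2$, which is nonzero whenever the modified sections retain the nonzero self-intersection of the section of $\pi'$.

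The main obstacle is exactly this last point: pinning down the square of the difference class $\alpha$. Point-pushing-type modifications of a section tend to change it by a ``rim-torus''-type class of square zero --- a case in which neither the intersection form nor any $\Psi_\ast$-invariant numerical datum distinguishes the $\sigma_k$ --- so the real work is in choosing the starting fibration, the configuration of vanishing cycles, and the loop along which one pushes so as to guarantee $\alpha^2\neq0$. If this proves too delicate, an alternative is to separate the $\sigma_k$ by a finer diffeomorphism invariant of the section complements $M\setminus\sigma_k(S^2)$: their boundaries are the circle bundles over $S^2$ of Euler number $[\sigma_k(S^2)]^2$, so any variation of self-intersection already distinguishes them, and Seiberg--Witten-type invariants of suitable closed-up models can be brought to bear even when the self-intersection numbers coincide.
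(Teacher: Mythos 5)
There is a genuine gap, and it is precisely the one you flag as the ``main obstacle'': for the sections $\sigma_k$ produced by the construction of \Cref{sec:construction}, the self-intersection number is \emph{independent of $k$}. This is proved in \Cref{rmk:self-intersection}: the boundary-twist exponent $a_k$ in the lift of the monodromy factorization to $\Mod(\Sigma_g^1)$ satisfies $a_k=a_0$ for all $k$, because any lift of $P_\gamma$ commutes with $T_\delta$ and with the untouched subword. Equivalently, the difference class $\alpha=[\sigma_1(S^2)]-[\sigma_0(S^2)]$ is represented by a rim torus contained in $\pi^{-1}(A)$, and constancy of $k\mapsto[\sigma_k(S^2)]^2=[\sigma_0(S^2)]^2+2k([\sigma_0(S^2)]\cdot\alpha)+k^2\alpha^2$ forces both $\alpha^2=0$ and $[\sigma_0(S^2)]\cdot\alpha=0$. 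So no choice of starting fibration, vanishing cycles, or pushing loop within this construction can make the square non-constant; your hoped-for arrangement with $\sigma_0(S^2)$ and $\sigma_1(S^2)$ disjoint is also unavailable, since the $\sigma_k$ all agree outside $\pi^{-1}(A)$. The fallback via invariants of the complements $M\setminus\sigma_k(S^2)$ is not developed and, as stated, its first-line invariant (the Euler number of the boundary circle bundle) is again $[\sigma_k(S^2)]^2$, hence constant.

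The paper's proof uses a genuinely different, non-homological invariant: the conjugacy class in $\Mod(\Sigma_{g,1})$ of the image $G_k$ of the monodromy representation $\rho_{(\pi,\sigma_k)}$. For the untwisted fiber sum of the $(2g)$-chain-relation fibration (Example \ref{ex:infty-orbits}), \Cref{lem:not-conjugate-monodromy} shows the $G_k$ are pairwise non-conjugate for $\lvert k_1\rvert\neq\lvert k_2\rvert$ --- the key inputs being the Birman exact sequence, the Brendle--Margalit injectivity theorem for the hyperelliptic mapping class group (to identify $\ker(G_k\to\Mod(\Sigma_g))$), and an abelianization argument showing the point-pushing part of $G_k$ lands in $kH_1(\Sigma_g;\Z)$. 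Non-conjugate monodromy groups give non-Hurwitz-equivalent positive factorizations, hence non-isomorphic pairs $(\pi,\sigma_k)$ by the Baykur--Hayano correspondence. If you want to salvage your outline, you would need to replace the intersection-form invariant with something of this monodromy-theoretic flavor.
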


We also show for an abundant family of examples that there is no finiteness result for the number of sections up to isomorphisms covering the identity in the smooth category, in direct contrast with the Parshin trick. (See Proposition \ref{prop:covering-identity}.) The following questions arise naturally.

\begin{question}
  Are there Lefschetz fibrations with finitely many isomorphism classes of smooth sections despite admitting infinitely many homotopically distinct sections? Are there any such Lefschetz fibrations with a unique isomorphism class of smooth sections?
\end{question}

We give examples in which the sections arising from our construction lie in a \emph{single} isomorphism class of sections (Example \ref{ex:surj-monodromy}).
Yet this does not address the last question, as we cannot rule out the presence of sections unrelated to our construction.

\subsection{Organization of the paper}
In \Cref{sec:lefschetz} we recall basic facts about Lefschetz fibrations, set up notation, and state our most general theorem (\cref{thm:infinite-sections}).
In \Cref{sec:construction} we describe our construction of sections $\sigma_k:S^2 \to M$, to be used in the proof of \Cref{thm:infinite-sections}. In \Cref{sec:proof-main-thm} we show that the sections $\sigma_k$ are homologically distinct. In \Cref{sec:symp}, we extend our results to the symplectic category by a slight modification of the Gompf--Thurston construction and conclude the proof of \Cref{thm:infinite-sections}. \Cref{sec:apps} contains many examples of Lefschetz fibrations (both decomposable and indecomposable) satisfying the assumptions of \Cref{thm:infinite-sections} and proves \Cref{thm:cor-infty} and Corollaries \ref{cor:intro-untwisted} and \ref{cor:intro-indec}.
Finally, in \Cref{sec:orbits} we study the action of the automorphism group of $\pi:M \to S^2$
on our construction and prove \Cref{thm:intro-non-isomorphic}.

\subsection{Acknowledgments.}
We are grateful to our advisor Benson Farb for his invaluable guidance and support throughout this project and for his numerous comments on earlier drafts of this paper. We would like to thank Ivan Smith for helpful email correspondences regarding his work \cite{smith}, \.{I}nanç Baykur for many insightful suggestions, including some that led us to the proof of Corollary \ref{cor:intro-indec}, and Linus Setiabrata for suggesting the generalization to Lefschetz fibrations over arbitrary bases (Remark \ref{rmk:surface-bundle}). We also thank Nick Salter for a helpful email correspondence and comments on an earlier draft and Bena Tshishiku for an insightful conversation.

\section{Lefschetz fibrations and sections}\label{sec:lefschetz}

The goal of this section is to state our main tool (Theorem \ref{thm:infinite-sections}) to detect the existence of infinitely many sections of certain genus-$g$ Lefschetz fibrations $\pi: M \to S^2$. Before doing so, we fix notation and state our standing assumptions.

\subsection{Lefschetz fibrations and monodromy}
Let $M^4$ be a closed, connected, oriented, smooth $4$-manifold. A surjective smooth map $\pi: M \to S^2$ is a \emph{Lefschetz fibration} if $\pi$ has finitely many critical points $q_1, \dots, q_r \in M$ and for each critical point $q_i$, there are smooth, orientation-compatible charts $U_i \cong \C^2$ around $q_i \in M$ and $V_i \cong \C$ around $\pi(q_i) \in S^2$ such that relative to these charts, $\pi$ takes the form
\[
	\pi(z_1, z_2) = z_1^2 + z_2^2.
\]
A Lefschetz fibration is \emph{nontrivial} if it has a positive number of critical points. We assume that Lefschetz fibrations are injective on the set of their critical points and that Lefschetz fibrations are \emph{relatively minimal}, meaning that no fiber of $\pi$ contains an embedded $(-1)$-sphere. The \emph{genus} of a Lefschetz fibration $\pi: M \to S^2$ is the genus of the surface $\pi^{-1}(b)$ for any regular value $b \in S^2$. Note that the orientations of $M$ and $S^2$ determine an orientation on any regular fiber $\pi^{-1}(b)$. In particular, if $s: S^2 \to M$ is a section of $\pi$ then
\[
	Q_{M}([\pi^{-1}(b)], [s(S^2)]) = 1
\]
where $Q_{M}: H_2(M; \Z) \times H_2(M; \Z) \to \Z$ denotes the algebraic intersection form of $M$.

Let $\pi_1: M_1 \to S^2$ and $\pi_2: M_2 \to S^2$ be genus-$g$ Lefschetz fibrations admitting sections $s_1: S^2 \to M_1$ and $s_2: S^2 \to M_2$ respectively. For each $i = 1, 2$, let $\nu_i \subseteq M_i$ be a fiberwise tubular neighborhood of a regular fiber of $\pi_i$. For any fiberwise orientation-reversing diffeomorphism of pairs $\psi: (\partial \nu_1, s_1(\partial \nu_1)) \to (\partial \nu_2, s_2(\partial \nu_2))$, consider the \emph{fiber sum}
\[
	\pi_1 \#_{F, \psi} \pi_2: M_1 \#_{F, \psi} M_2 \to S^2
\]
which is a genus-$g$ Lefschetz fibration obtained by gluing $M_1 - \nu_1$ and $M_2 - \nu_2$ along their boundaries via $\psi$. The fiber sum $\pi_1 \#_{F, \psi} \pi_2$ naturally also admits a section $s_1 \#_{F, \psi} s_2: S^2 \to M_1 \#_{F, \psi} M_2$ defined piecewise as $s_i$ on the image of $M_i - \nu_i$ in $S^2$ for each $i = 1, 2$. If $(M_1, \nu_1) = (M_2, \nu_2)$ and $\psi$ restricts to the identity map on some fiber $\pi_1^{-1}(q) = \pi_2^{-1}(q)$ then we omit $\psi$ from the notation and $\pi_1 \#_F \pi_2: M_1 \#_F M_2 \to S^2$ is called an \emph{untwisted} fiber sum.

The data $(\pi: M \to S^2, s: S^2 \to M)$ of a Lefschetz fibration and section can be encoded in an antihomomorphism called the \emph{monodromy representation} \cite[p.~291]{gompf-stipsicz}. The choices of a regular value $b \in S^2$ and a diffeomorphism of pairs $\Phi_b: (\pi^{-1}(b), s(b)) \xrightarrow{\sim} (\Sigma_g, p)$ for a marked point $p \in \Sigma_g$ determine a monodromy representation of $(\pi, s)$
\[
	\rho_{(\pi, s)}: \pi_1(S^2 - \{q_1, \dots, q_r\}, b) \to \Mod(\Sigma_{g,1}),
\]
where $q_1, \dots, q_r \in S^2$ denote the singular values of $\pi$ and $\Sigma_{g,1}$ is a genus-$g$ surface with one marked point $p$. If $\gamma_i \in \pi_1(S^2 - \{q_1, \dots, q_r\}, b)$ is a loop obtained from a small, counterclockwise loop around $q_i$ connected to $b$ by a path in $S^2 - \{q_1, \dots, q_r\}$, the monodromy $\rho_{(\pi, s)}(\gamma_i)$ is a right-handed Dehn twist $T_{\ell_i} \in \Mod(\Sigma_{g,1})$ about a \emph{vanishing cycle} $\ell_i$, which is an isotopy class $\ell_i$ of some essential simple closed curve in $\Sigma_{g,1}$. Fixing a choice of generators $\gamma_1, \dots, \gamma_r \in \pi_1(S^2 - \{q_1, \dots, q_r\}, b)$ whose composition gives a contractible loop
encircling all singular values determines a \emph{monodromy factorization} of the pair $(\pi, s)$, which is a relation in $\Mod(\Sigma_{g,1})$ of the form
\[
	T_{\ell_r} \dots T_{\ell_1} = 1 \in \Mod(\Sigma_{g, 1}).
\]

The group $\Mod(\Sigma_{g,1})$ fits into the \emph{Birman exact sequence} \cite[Section 4.2]{farb-margalit}
\begin{equation}\label{eqn:birman}
	1 \to \pi_1(\Sigma_g, p) \xrightarrow{\mathrm{Push}} \Mod(\Sigma_{g,1}) \xrightarrow{\mathrm{Forget}} \Mod(\Sigma_g) \to 1.
\end{equation}
For any $\gamma \in \pi_1(\Sigma_g, p)$, denote
\[
	P_\gamma := \mathrm{Push}(\gamma^{-1}) \in \Mod(\Sigma_{g,1})
\]
so that $P_\gamma$ is represented by a diffeomorphism of $\Sigma_{g,1}$ obtained by ``pushing $p$ along $\gamma$.'' We note that the map $\gamma \mapsto P_\gamma$ is an antihomomorphism of groups.
\subsection{Statement of the main theorem}
We are now ready to state a sufficient condition for a symplectic Lefschetz fibration to admit infinitely many homologically distinct symplectic sections, which can be checked directly from a monodromy factorization of a section.
\begin{thm}\label{thm:infinite-sections}
Let $\pi: M \to S^2$ be a genus-$g$ Lefschetz fibration with $g \geq 2$ and let $s: S^2 \to M$ be a section of $\pi$. Fix a subword $(T_{\ell_{r_1}} \dots T_{\ell_1})$ of a monodromy factorization
\[
	T_{\ell_{r_1+r_2}} \dots T_{\ell_{r_1+1}} T_{\ell_{r_1}} \dots T_{\ell_1} = 1 \in \Mod(\Sigma_{g,1})
\]
of the pair $(\pi, s)$. Suppose that the following conditions hold:
\begin{itemize}
\item There exists a nonseparating simple closed curve $\delta \subseteq \Sigma_{g, 1}$ such that $[\delta] \in H_1(\Sigma_g; \Z)$ is an element of
\[
	\Z\{[\ell_1], \dots, [\ell_{r_1}]\} \cap \Z\{[\ell_{r_1+1}], \dots, [\ell_{r_1+r_2}]\} \subseteq H_1(\Sigma_g; \Z).
\]
\item There exists a simple loop $\gamma: \R/\Z \to \Sigma_g$ with $\gamma(0) = p \in \Sigma_g$ that intersects $\delta$ transversely exactly once in $\Sigma_g$ and
\[
	(T_{\ell_{r_1}} \dots T_{\ell_{1}})([\gamma]) = [\gamma] \in \pi_1(\Sigma_g, p).
\]
\end{itemize}
Then the Lefschetz fibration $\pi: M \to S^2$ admits infinitely many sections
\[
	\{\sigma_k : S^2 \to M : k \in \Z\}
\]
such that
\begin{enumerate}[(a)]
\item the sections $\sigma_k$ are pairwise homologically distinct, i.e. if $k_1 \neq k_2 \in \Z$ then
\[
	[\sigma_{k_1}(S^2)]\neq [\sigma_{k_2}(S^2)] \in H_2(M; \Z);\label{thm:homology-distinct}
\]
\item a monodromy factorization of $(\pi, \sigma_k)$ is
\[
	\left(T_{\ell_{r_1+r_2}} \dots T_{\ell_{r_1 + 1}}\right) \left(T_{P_\gamma^k(\ell_{r_1})}\dots T_{P_\gamma^k(\ell_1)}\right)= 1 \in \Mod(\Sigma_{g,1});\label{thm:monodromy-factorization}
\]
\item there exists a symplectic form $\omega$ of $M$ such that (the smooth loci of) the fibers of $\pi$ and any section $\sigma_k(S^2)$ are all symplectic submanifolds of $(M, \omega)$. \label{thm:symplectic-form}
\end{enumerate}
\end{thm}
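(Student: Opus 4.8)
The plan is to build the sections $\sigma_k$ explicitly by modifying the section $s$ inside a neighborhood of a single regular fiber, exploiting the curve $\gamma$ and the homological condition on $\delta$. Concretely, I would first reinterpret the subword $(T_{\ell_{r_1}} \cdots T_{\ell_1})$ geometrically: it describes the monodromy of $(\pi,s)$ restricted to a disk $D \subseteq S^2$ containing the first $r_1$ singular values, with $\pi^{-1}(D) \cong D \times \Sigma_{g,1}$ trivialized so that the return map along $\partial D$ is $T_{\ell_{r_1}} \cdots T_{\ell_1}$. Over $\partial D$, the total space restricted to this boundary circle is a $\Sigma_{g,1}$-bundle whose mapping class is fixed by this product; the second hypothesis says $[\gamma]$ is preserved by this mapping class, so the ``push along $\gamma$'' maps $P_\gamma^k$ commute with the return map up to the relation we need. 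To produce $\sigma_k$, I would modify the trivialization over an annular collar of $\partial D$ by inserting the automorphism $P_\gamma^k$ of $\Sigma_{g,1}$ (which fixes the marked point, hence preserves the section), absorbing this change by conjugating the vanishing cycles $\ell_1, \dots, \ell_{r_1}$ to $P_\gamma^k(\ell_1), \dots, P_\gamma^k(\ell_{r_1})$. This simultaneously yields the new section $\sigma_k$ and establishes item \ref{thm:monodromy-factorization}; the details of this cut-and-reglue are exactly the content of \Cref{sec:construction}, so here I would just cite that construction and record the resulting monodromy factorization.

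The main work is item \ref{thm:homology-distinct}: showing the classes $[\sigma_k(S^2)] \in H_2(M;\Z)$ are pairwise distinct. The idea is that the difference $\sigma_k(S^2) - \sigma_0(S^2)$, realized by the homotopy that slides the section across the fiber while performing $P_\gamma^k$, should be detected by pairing against a carefully chosen $2$-cycle. The curve $\delta$ is the key: since $[\delta]$ lies in both $\Z\{[\ell_1],\dots,[\ell_{r_1}]\}$ and $\Z\{[\ell_{r_1+1}],\dots,[\ell_{r_1+r_2}]\}$, one can build a genuine $2$-cycle $Z$ in $M$ by capping a $\delta$-fiber-family over a sub-disk with surfaces bounded by the vanishing cycles on either side (a standard construction producing ``thimble'' surfaces). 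Because $\gamma$ meets $\delta$ transversely once, each application of $P_\gamma$ changes the intersection number $Q_M([\sigma_k(S^2)], [Z])$ by exactly $\pm 1$; iterating, $Q_M([\sigma_k(S^2)],[Z]) = Q_M([\sigma_0(S^2)],[Z]) \pm k$, which is injective in $k$. I expect this intersection-number bookkeeping — verifying that the capping surfaces can be chosen disjoint from the pushed sections except for the one controlled intersection coming from $\gamma \cap \delta$ — to be the technical heart of the argument, and it is presumably carried out in \Cref{sec:proof-main-thm}.

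Finally, for item \ref{thm:symplectic-form} I would invoke the Gompf--Thurston construction: any Lefschetz fibration over $S^2$ admits a symplectic form making all fibers symplectic, built from a fiberwise area form plus a large multiple of the pullback of an area form on $S^2$. The subtlety is that we need a \emph{single} $\omega$ for which \emph{all} the $\sigma_k(S^2)$ are simultaneously symplectic. Since each $\sigma_k$ is a smooth section, $\pi \circ \sigma_k = \mathrm{id}$, so $\sigma_k^*(\pi^* \omega_{S^2})$ is already an area form on $S^2$; choosing the $S^2$-factor coefficient large enough relative to the fixed fiber form makes $\sigma_k^*\omega$ positive for that particular $k$. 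The point is that the sections $\sigma_k$ differ only inside a \emph{fixed} neighborhood of one regular fiber — over a fixed disk $D' \subseteq S^2$ — and outside $D'$ they all coincide with $s$; hence one only needs the coefficient large enough to dominate the fiberwise form on the compact region $\pi^{-1}(D')$, uniformly in $k$, which is possible because the construction of \Cref{sec:construction} can be arranged to keep the $\sigma_k$ uniformly $C^1$-close to a fibered model there (or, alternatively, by a more careful Thurston-type argument adapting the $2$-form to the $\sigma_k$ inside $\pi^{-1}(D')$). I would defer this to \Cref{sec:symp}, where the modification of Gompf--Thurston is spelled out, and simply note here that the uniformity in $k$ is what makes a single $\omega$ work.
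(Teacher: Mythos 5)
Your outline for parts \ref{thm:homology-distinct} and \ref{thm:monodromy-factorization} is essentially the paper's argument: the paper decomposes $S^2 = D_1 \cup A \cup D_2$ with the two groups of critical values separated by the annulus $A$, spins the section along $\gamma$ over $A$ (using that $\rho_{(\pi,s)}(\partial D_2)$ fixes $[\gamma]$, so a representative $\varphi$ can be chosen fixing a neighborhood of $\gamma$ pointwise), obtains the factorization in \ref{thm:monodromy-factorization} by exhibiting $(\pi,\sigma_k)$ as a regluing $X_1 \cup_{p_\gamma^k} X_2$, and detects the homology classes by pairing against a closed surface $T = T_1 \cup T_2$ built from the hypothesis that $[\delta]$ bounds in both $X_1$ and $X_2$; the local computation gives $Q_M([T],[\sigma_k(S^2)]-[\sigma_0(S^2)]) = k$. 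Your sketch of this matches in all essentials.

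There is, however, a genuine gap in your argument for part \ref{thm:symplectic-form}. The claim that the $\sigma_k$ can be kept ``uniformly $C^1$-close to a fibered model'' over the modification region, so that a single scaling constant dominates the fiberwise form for all $k$ at once, is false: over the annulus $A$ the section $\sigma_k$ traverses $\gamma$ a total of $k$ times, so its derivative in the fiber direction grows linearly in $k$ (the tangent vector is $(k\gamma'(kt),0,\partial_t)$). Consequently, if the fiberwise $2$-form $\eta$ is nonzero on the $2$-planes spanned by $\gamma'$ and the radial base direction, then $\sigma_k^*\eta$ grows without bound in $k$, and no choice of $t$ in $\omega_t = t\eta + \pi^*\omega_{S^2}$ (equivalently, no single large multiple of $\pi^*\omega_{S^2}$) works simultaneously for all $k$. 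Compactness gives uniformity over the region, not over the infinite family of sections. The correct fix --- which is what the paper carries out in \Cref{sec:symp}, and which you mention only parenthetically as an alternative --- is to modify the Gompf--Thurston form so that $\eta$ restricts to \emph{zero} on a neighborhood of $\gamma \times A$ containing every $\sigma_k(A)$: one pulls back a fiberwise area form from the mapping torus $M_\varphi$ that vanishes identically on $\gamma \times S^1$ and patches it into $\eta$ with cutoff functions whose correction terms are arranged to vanish there as well. Then $\omega_t|_{\sigma_k(A)} = \pi^*\omega_{S^2}|_{\sigma_k(A)}$, which is positive on any section, independently of $k$. This adaptation is the real content of part \ref{thm:symplectic-form} and cannot be replaced by a uniform-domination argument.
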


\begin{rmk}
  The loop $\gamma$ will be used to construct the sections $\sigma_k$, and
  because of smoothness we will assume that $\gamma(t) = p$ for $t$ near $0 \in \R/\Z$. The curve $\delta \subseteq \Sigma_{g, 1}$ will be used to detect that the sections $\sigma_k$ are pairwise homologically distinct. Also see Remark \ref{rmk:delta-geom} for a topological interpretation of the assumption on $[\delta]$.
\end{rmk}

The proofs of Theorem \ref{thm:infinite-sections}\ref{thm:homology-distinct}, \ref{thm:monodromy-factorization}, and \ref{thm:symplectic-form} can be found in Sections \ref{sec:main-proof}, \ref{sec:monodromy-factorization}, and \ref{sec:symp} respectively. Each part is proven independently from the rest.

\section{Construction of sections $\sigma_k$ for each $k \in \Z$}\label{sec:construction}

In this section we construct a section $\sigma_k: S^2 \to M$ of $\pi: M \to S^2$ for each integer $k \in \Z$, to be used in the proof of Theorem \ref{thm:infinite-sections}, emphasizing a special case in which $\pi$ is a fiber sum $\pi_1 \#_{F,\psi} \pi_2: M_1 \#_{F, \psi} M_2 \to S^2$ which will be used extensively in later sections.
\subsection{A decomposition of $S^2$}
Fix the notation and assumptions of Theorem \ref{thm:infinite-sections}. Let $q_1, \dots, q_{r_1+r_2} \in S^2$ denote the singular values of $\pi$ corresponding to the vanishing cycles $\ell_1, \dots, \ell_{r_1+r_2}$ respectively.

The base $S^2$ can be written as a union of three sets
\[
	S^2 = D_1 \cup A \cup D_2,
\]
such that (cf. Figure \ref{fig:s2-decomposition})
\begin{enumerate}[(a)]
\item $D_1, D_2 \cong D^2$ are closed $2$-disks and $A = S^1 \times (0, 1)$ is an open annulus so that $\partial D_1 = S^1 \times \{0\}$ and $\partial D_2 = S^1 \times \{1\}$ in the closure $\overline A = S^1 \times [0, 1]$,
\item the base point $b$ is contained in $\partial D_2$,
\item $q_1, \dots, q_{r_1}$ are contained in $D_1$ and the singular values $q_{r_1+1}, \dots, q_{r_1+r_2}$ is contained in $D_2$, and
\item the based loop $(C, b) := (\partial D_2, b) \subseteq (S^2, b)$ can be oriented so that when considered as an element of $\pi_1(S^2 - \{q_1, \dots, q_{r_1+r_2}\}, b)$,
\[
	\rho_{(\pi,s)}(C) = T_{\ell_{r_1}} \dots T_{\ell_1} \in \Mod(\Sigma_{g, 1})
\]
where $\rho_{(\pi,s)}: \pi_1(S^2 - \{q_1, \dots, q_{r_1+r_2}\}, b) \to \Mod(\Sigma_{g,1})$ is the monodromy representation of $(\pi, s)$ with respect to a diffeomorphism of pairs $\Phi_b: (\pi^{-1}(b), s(b)) \to (\Sigma_g, p)$.
\end{enumerate}
\begin{figure}
\centering
\includegraphics[width=0.25\textwidth]{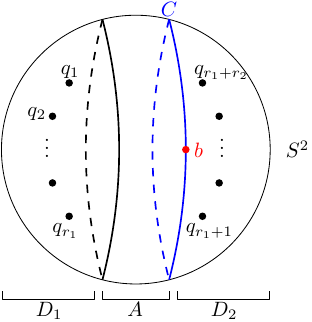}
\caption{This figure summarizes the notation for various parts of $S^2$.}\label{fig:s2-decomposition}
\end{figure}
Let
\[
	X_1 := \pi^{-1}(D_1\cup \overline A), \qquad X_2 := \pi^{-1}(D_2).
\]
\begin{rmk}\label{rmk:delta-geom}
In the statement of Theorem \ref{thm:infinite-sections}, $[\delta] \in H_1(\Sigma_g; \Z)$ is assumed to be contained in the intersection $\Z\{[\ell_1], \dots, [\ell_{r_1}]\} \cap \Z\{[\ell_{r_1+1}], \dots, [\ell_{r_1+r_2}]\}$. This is is equivalent to the condition that $[\delta] = 0\in H_1(X_i; \Z)$ for both $i = 1, 2$, which can be seen by viewing $X_i$ as $\Sigma_g \times D^2$ with a $2$-handle attached along each vanishing cycle $\ell_k$ of $X_i$ (\cite[Section 8.2]{gompf-stipsicz}) and applying Mayer--Vietoris.
\end{rmk}

\begin{example}\label{ex:untwisted}
  Let $\pi_1: M_1 \to S^2$ and $\pi_2: M_2 \to S^2$ be genus-$g$ Lefschetz fibrations with sections $s_1: S^2 \to M_1$ and $s_2: S^2 \to M_2$. Consider any fiber sum $\pi_1 \#_{F,\psi} \pi_2: M_1 \#_{F,\psi} M_2 \to S^2$ and the corresponding section $s_1 \#_{F,\psi} s_2$; denote this pair by $(\pi, s)$ and its monodromy representation by $\rho_{(\pi,s)}$. Let
\[
	\left(T_{\ell_{r_1+r_2}} \dots T_{\ell_{r_1+1}}\right) \left(T_{\ell_{r_1}} \dots T_{\ell_1}\right)  = 1\in \Mod(\Sigma_{g,1})
\]
denote a monodromy factorization of $(\pi, s)$ such that
\[
	T_{\ell_{r_1}} \dots T_{\ell_1} = T_{\ell_{r_1+r_2}} \dots T_{\ell_{r_1+1}} = 1 \in \Mod(\Sigma_{g,1})
\]
are factorizations of $(\pi_1,s_1)$ and $(\pi_2, s_2)$ respectively.

For the subword $T_{\ell_{r_1}} \dots T_{\ell_1}$, arrange so that
\[
	X_1 = M_1 - \nu_1, \qquad X_2 = M_2 - \nu_2
\]
where $\nu_i \subseteq M_i$ is a fiberwise neighborhood of a regular fiber of $\pi_i$ for both $i = 1, 2$ so that $M_1 \#_{F,\psi} M_2 = X_1 \cup_\psi X_2$.
In particular, $\{\ell_1,\ldots, \ell_{r_1}\}$ is a set of vanishing cycles of $M_1$ and $\{\ell_{r_1+1}, \ldots, \ell_{r_1 + r_2}\}$ is a set of vanishing cycles of $M_2$.

It follows that for $C$ as above, $\rho_{(\pi,s)}(C) = 1 \in \Mod(\Sigma_{g,1})$. Thus, any $\gamma \in \pi_1(\Sigma_g, p)$ is fixed by $\rho_{(\pi, s)}(C) = T_{\ell_{r_1}} \dots T_{\ell_1}$.
\end{example}

\subsection{Construction of the sections $\sigma_k$}\label{sec:sections}

In this section we construct the sections $\sigma_k: S^2 \to M$ of $\pi: M \to S^2$. The following example first illustrates a special case in which $\pi = \pi_1 \#_{F,\psi} \pi_2: M_1 \#_{F,\psi} M_2 \to S^2$ is a fiber sum, extending the setup described in Example \ref{ex:untwisted}.

\begin{example}\label{ex:sections-fiber}
Consider the fiber sum $\pi = \pi_1 \#_{F,\psi} \pi_2: M_1 \#_{F,\psi} M_2 \to S^2$ as in Example \ref{ex:untwisted}. Because $\rho_{(\pi, s)}(C) = 1$, there is a diffeomorphism $\Phi: \pi^{-1}(\overline A)\to (\Sigma_g \times S^1) \times [0, 1]$ such that the following diagram commutes:
\begin{figure}[H]
\centering
\begin{tikzcd}
\pi^{-1}(\overline A) \arrow[d, "\pi"] \arrow[rr, "\Phi"] &  & \Sigma_g \times S^1 \times [0, 1] \arrow[d, "{(x, \theta, t) \mapsto (\theta, t)}"] \\
\overline A \arrow[rr, Rightarrow, no head] \arrow[u, "s", bend left]       &  & S^1 \times [0, 1] \arrow[u, "{(\theta, t) \mapsto (p, \theta, t)}", bend left]
\end{tikzcd}
\end{figure}
\noindent
Moreover, we may arrange so that $\Phi$ extends $\Phi_b$, i.e. $\Phi|_{\pi^{-1}(b)} = \Phi_b \times \{b\}$ where $b \in \overline A = S^1 \times [0, 1]$ is the fixed regular value of $\pi$.

For all $k \in \Z$, let $\sigma_k: S^2 \to M_1 \#_{F,\psi} M_2$ be defined by
\[
	\sigma_k(x) = \begin{cases}
	s(x) & \text{if }x \in D_1 \cup D_2, \\
	\Phi^{-1}(\gamma(tk), \theta, t) & \text{if }x = (\theta, t) \in \overline A.
	\end{cases}
\]
See Figure \ref{fig:section}. Because $(\gamma(tk), \theta, t) = (p, \theta, t)$ for any $(\theta, t) \in \partial A$, the map $\sigma_k$ is well-defined.
\end{example}
\begin{figure}
\centering
\includegraphics[width=0.7\textwidth]{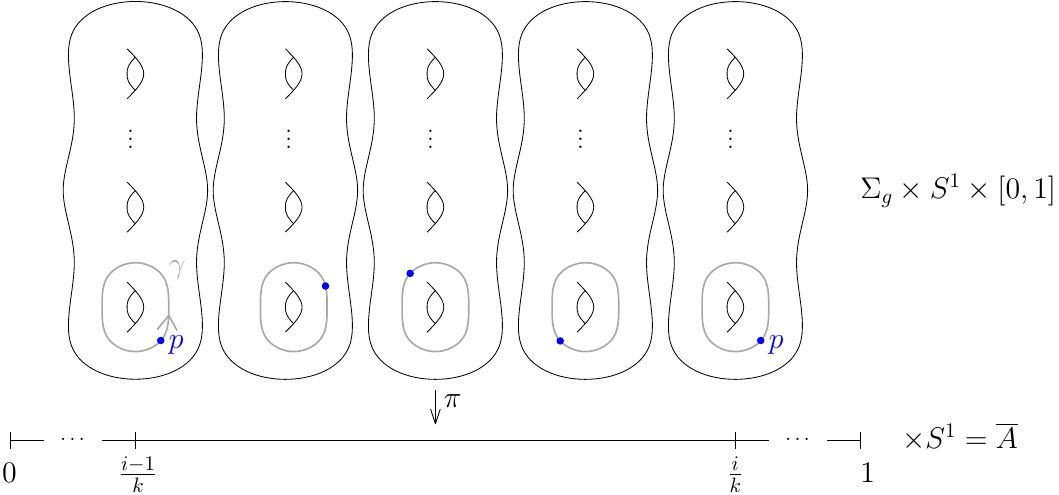}
\caption{Let $k > 0$. The blue point traces out the section $\sigma_k$ over the fibers of $\pi$ over $(\theta, t) \in A$ as $t \in [0, 1]$ varies. This picture is independent of the $S^1$-coordinate $\theta$.}\label{fig:section}
\end{figure}

More generally, consider a Lefschetz fibration $\pi: M \to S^2$ of genus $g \geq 2$. Let\footnote{
  To find $\varphi$, recall that we can always find a representative $\varphi_0$ of $\rho_{(\pi, s)}(C)$
  fixing a neighborhood of $p$. Now the proof of \cite[Prop 1.10]{farb-margalit} shows that
  $\varphi_0(\gamma)$ is isotopic to $\gamma$ rel $p$. Hence, we can isotope (rel $p$) $\varphi_0$ to fix
  $\gamma$ pointwise \cite[Prop 1.11]{farb-margalit}. Finally, $\varphi_0$ can be isotoped to
fix a tubular neighborhood $\alpha$ of $\gamma$ by the uniqueness of tubular neighborhoods up to isotopy.} $\varphi \in \Diff^+(\Sigma_g, p)$ be a representative of $\rho_{(\pi,s)}(C) \in \Mod(\Sigma_{g, 1})$ that fixes pointwise an annular neighborhood $\alpha \subseteq \Sigma_{g}$ of the loop $\gamma: \R/\Z \to \Sigma_g$. Let $M_\varphi$ denote the mapping torus
\[
	M_\varphi = (\Sigma_g \times [0, 1]) / (x,0) \sim (\varphi(x), 1).
\]
There is a diffeomorphism $\Phi: \pi^{-1}(\overline A) \to M_\varphi \times [0, 1]$ such that $\Phi|_{\pi^{-1}(b)} = \Phi_b \times \{b\}$ (for the fixed regular value $b \in \overline A$ of $\pi$) and such that the following diagram commutes:
\begin{figure}[H]
\centering
\begin{tikzcd}
\pi^{-1}(\overline A) \arrow[d, "\pi"] \arrow[rr, "\Phi"] &  & M_\varphi \times [0, 1] \arrow[d, "{(x, \theta, t) \mapsto (\theta, t)}"] \\
\overline A \arrow[rr, Rightarrow, no head] \arrow[u, "s", bend left]    &  & S^1 \times [0, 1] \arrow[u, "{(\theta, t) \mapsto (p, \theta, t)}", bend left]
\end{tikzcd}
\end{figure}

For all $k \in \Z$, let $\sigma_k: M \to S^2$ be defined by
\[
	\sigma_k(x) = \begin{cases}
	s(x) & \text{if }x \in D_1 \cup D_2, \\
	\Phi^{-1}(\gamma(tk), \theta, t) & \text{if }x = (\theta, t) \in \overline A.
	\end{cases}
\]
The map $\sigma_k$ is a section of $\pi$ by construction. To see that $\sigma_k$ is well-defined, it is enough to check this over $\overline A$. We have for any $t \in [0, 1]$ and $x = (0, t) = (1, t)$,
\[
	\sigma_k(0, t) = \Phi^{-1}(\gamma(tk), 0, t) = \Phi^{-1}(\varphi(\gamma(tk)), 1, t) =\Phi^{-1}(\gamma(tk), 1, t) = \sigma_k(1, t).
\]
Moreover, $tk$ is an integer for any $(\theta, t) \in \partial A$, so
\[
	\Phi^{-1}(\gamma(tk), \theta, t) = \Phi^{-1}(p, \theta, t) = s(\theta, t).
\]

\subsection{Monodromy factorizations}\label{sec:monodromy-factorization}

The next proof describes monodromy factorizations of the sections $\sigma_k: S^2 \to M$ using a different description of these sections.
\begin{proof}[Proof of Theorem \ref{thm:infinite-sections}\ref{thm:monodromy-factorization}]
Take a diffeomorphism $h_\gamma \in \Diff^+(\Sigma_{g, 1})$ representing the mapping class $P_\gamma \in \Mod(\Sigma_{g,1})$ and an isotopy $h_\gamma^t: \Sigma_{g} \to \Sigma_g$ with $h_\gamma^0 =\Id_{\Sigma_g}$ and $h_\gamma^1 = h_\gamma$ so that
\begin{enumerate}[(a)]
\item $h_{\gamma}^t$ is supported in the fixed neighborhood $\alpha \subseteq \Sigma_g$ of $\gamma$ for all $t$, and
\item $h_\gamma^t(p) = \gamma(t) \in \Sigma_g$ for all $t \in [0, 1]$.
\end{enumerate}
For any $k \in \Z$, let $h_{\gamma,k}^t: \Sigma_g \to \Sigma_g$ denote the isotopy with $h_{\gamma,k}^0 =\Id_{\Sigma_g}$ and $h_{\gamma,k}^1 = h_\gamma^k$ obtained by concatenating $\lvert k \rvert$-many copies of the isotopy $h_{\gamma}^t$ or $(h_\gamma^t)^{-1}$. Define a diffeomorphism of pairs $p_\gamma: (\partial X_1, s(\partial(D_1 \cup \overline A))) \to (\partial X_2, s(\partial D_2))$ by
\[
	p_\gamma := \Phi^{-1}\circ (h_\gamma \times \Id) \circ \Phi .
\]
Let $N_k := X_1 \cup_{p_\gamma^{k}} X_2$. The naturally defined map $\pi_k: N_k \to S^2$ is a genus-$g$ Lefschetz fibration with a section $S_k: S^2 \to N_k$ obtained by gluing together the restrictions $s|_{X_1}$ and $s|_{X_2}$ accordingly.

The monodromy factorization $(\pi_k, S_k)$ with respect to the identification $\Phi_b: (\pi_k^{-1}(b), S_k(b)) \to \Sigma_{g,1}$ via the identification $\pi_k^{-1}(b) = \pi^{-1}(b) \subseteq \partial X_1$ and a fixed choice of generators $\gamma_i$ of $\pi_1(S^2 - \{q_1, \dots, q_{r_1+r_2}\}, b)$ is
\[
  \left(T_{P_{\gamma}^{-k}(\ell_{r_1+r_2})} \dots T_{P_{\gamma}^{-k}(\ell_{r_1+1})}\right)
  \left(T_{\ell_{r_1}} \dots T_{\ell_1}\right)= 1 \in \Mod(\Sigma_{g,1}),
\]
where we note that the equality of mapping classes holds because the product $T_{\ell_{r_1}} \dots T_{\ell_1}$ commutes with $P_\gamma$ and because $T_{P_\gamma(\ell_i)} = P_\gamma T_{\ell_i} P_\gamma^{-1}$ for any curve $\ell_i$.

On the other hand, one can check that the diffeomorphism $\Psi_k: N_k \to M$ defined by
\[
	\Psi_k(x) := \begin{cases}
	x & \text{ if }x \in \pi^{-1}(D_1 \cup D_2), \\
	\Phi^{-1}(h_{\gamma, k}^t(y), \theta, t) & \text{ if }x = \Phi^{-1}(y, \theta, t) \in \pi^{-1}(A)
	\end{cases}
\]
induces an isomorphism of pairs $(\pi_k, S_k)\cong (\pi, \sigma_k)$ lying over the identity $\Id_{S^2}: S^2 \to S^2$. The restriction $\Psi_k: \pi_k^{-1}(b) = \pi^{-1}(b) \to \pi^{-1}(b)$ is the map $\Phi_b^{-1} \circ h_\gamma^k \circ \Phi_b$, and hence the monodromy factorization with respect to the identification $\Phi_b: (\pi^{-1}(b), \sigma_k(b)) \to \Sigma_{g,1}$ of $(\pi, \sigma_k)$ is the $h_\gamma^k$-conjugate of the factorization of $(\pi_k, S_k)$, i.e.
\[
	\left(T_{\ell_{r_1+r_2}} \dots T_{\ell_{r_1+1}}\right) \left(T_{P_\gamma^{k}(\ell_{r_1})} \dots T_{P_\gamma^{k}(\ell_{1})}\right)  = 1 \in \Mod(\Sigma_{g,1}). \qedhere
\]
\end{proof}

\begin{rmk}[Self-intersection number of $\sigma_k$]\label{rmk:self-intersection}
Consider the monodromy factorization of $(\pi, \sigma_k)$ in $\Mod(\Sigma_{g,1})$ as a factorization instead in $\Mod(\Sigma_g^1)$ of the form
\[
	\left(T_{\ell_{r_1+r_2}} \dots T_{\ell_{r_1+1}}\right) \left(T_{P_\gamma^{k}(\ell_{r_1})} \dots T_{P_\gamma^{k}(\ell_{1})}\right) = T_\delta^{a_k} \in \Mod(\Sigma_g^1)
\]
for some $a_k \in \Z$, where $\delta$ denotes the boundary component of $\Sigma_g^1$. Then the self-intersection $Q_M([\sigma_k(S^2)], [\sigma_k(S^2)]) = -a_k$ (\cite[Lemma 2.3]{smith}). Because any lift of $P_\gamma$ to $\Mod(\Sigma_{g}^1)$ commutes with $T_\delta$ and with $(T_{\ell_{r_1+r_2}} \dots T_{\ell_{r_1+1}})$ in $\Mod(\Sigma_g^1)$, there is an equality of mapping classes
\[
	T_\delta^{a_k} = \left(T_{\ell_{r_1+r_2}} \dots T_{\ell_{r_1+1}}\right) \left(T_{P_\gamma^{k}(\ell_{r_1})} \dots T_{P_\gamma^{k}(\ell_{1})}\right) = (T_{\ell_{r_1+r_2}} \dots T_{\ell_{r_1+1}}) (T_{\ell_{r_1}} \dots T_{\ell_1})  = T_\delta^{a_0} \in \Mod(\Sigma_g^1)
\]
for all $k \in \Z$. So for all $k \in \Z$, the section $\sigma_k$ has self-intersection number $-a_0$.
\end{rmk}

\section{Homology classes of $\sigma_k(S^2)$}\label{sec:proof-main-thm}

The goal of this section is to prove Theorem \ref{thm:infinite-sections}\ref{thm:homology-distinct}. In Section \ref{sec:coords} we record a key computation of signed intersection numbers in $\Sigma_g \times (0, 1)^2$, and use it to prove Theorem \ref{thm:infinite-sections}\ref{thm:homology-distinct} in Section \ref{sec:main-proof}.

\subsection{Signed intersections in $\Sigma_g \times (0, 1)^2$}\label{sec:coords}
Fix the natural inclusion
\[
	\Sigma_g \times (0, 1) \times (0, 1) \subseteq M_\varphi \times (0, 1) = (\Sigma_g \times [0, 1])/\sim) \times (0, 1).
\]
Below, we record the signed intersection number in $\Sigma_g \times (0,1) \times (0, 1)$ of the submanifolds
\[
	\mathcal S_k = \Phi\circ\sigma_k((0, 1) \times (0, 1)) \qquad \text{ and } \qquad \mathcal T = \delta \times \{\theta_0\} \times (0, 1).
\]
Here, $\theta_0 \in (0, 1)$ is the $S^1$-coordinate of the point $b = (\theta_0, 1) \in S^1 \times \{1\} \subseteq \partial A$.

Orient the submanifold $\mathcal S_k \subseteq \Sigma_g \times (0, 1)^2$ via the orientation of $(0, 1) \times (0, 1)$. To orient $\mathcal T$, recall that $\gamma \subseteq \Sigma_g$ is an oriented loop based at $p \in \Sigma_g$ intersecting $\delta$ transversely and exactly once; denote this intersection point by $x \in \Sigma_g$. Fix an orientation on $\delta \subseteq \Sigma_g$ so that $\hat i([\gamma], [\delta]) > 0$, where $\hat i$ denotes the algebraic intersection form of $\Sigma_g$. This in turn defines the product orientation on $\mathcal T$.

\begin{lem}\label{lem:key-comp}
The submanifolds $\mathcal T$ and $\mathcal S_k$ intersect transversely $\lvert k \rvert$-times in $\Sigma_g \times (0,1) \times (0, 1)$ at
\[
	\mathcal T \cap \mathcal S_k = \{(x, \theta_0, t_i) : \gamma(kt_i) = x, \, \, 1 \leq i \leq \lvert k\rvert\}.
\]
The signed intersection number of the oriented submanifolds $\mathcal T$ and $\mathcal S_k$ in $\Sigma_g \times (0, 1) \times (0, 1)$ is $k$.
\end{lem}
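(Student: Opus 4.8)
The plan is to compute the intersection $\mathcal{T} \cap \mathcal{S}_k$ set-theoretically first, then orient everything carefully and add up the local signs. Recall $\mathcal{S}_k = \Phi \circ \sigma_k((0,1)\times(0,1))$, and by the construction in \Cref{sec:sections} we have $\Phi\circ\sigma_k(\theta,t) = (\gamma(tk), \theta, t) \in \Sigma_g \times (0,1) \times (0,1)$, where I write $\gamma(tk)$ for the point of $\Sigma_g$ obtained by following $\gamma$ (wrapped $|k|$ times); since $\gamma(0)=p$ and $\gamma$ is a loop, $\gamma(tk)$ depends only on $tk \bmod 1$. So $\mathcal{S}_k = \{(\gamma(tk), \theta, t) : \theta, t \in (0,1)\}$, and $\mathcal{T} = \delta \times \{\theta_0\} \times (0,1)$. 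A point lies in the intersection iff its $S^1$-coordinate is $\theta_0$, its $\Sigma_g$-coordinate lies on $\delta$, and the two $(0,1)$-coordinates agree; i.e. iff it equals $(\gamma(tk), \theta_0, t)$ with $\gamma(tk) \in \delta$. Since $\gamma$ meets $\delta$ transversely in the single point $x = \gamma(t_0)$ for a unique $t_0 \in \R/\Z$, the condition $\gamma(tk) \in \delta$ becomes $tk \equiv t_0 \pmod 1$, i.e. $t = (t_0 + j)/k$ for $j \in \Z$; exactly $|k|$ of these lie in $(0,1)$. This gives the asserted description of $\mathcal{T}\cap\mathcal{S}_k$ with $|k|$ points $(x, \theta_0, t_i)$, $\gamma(kt_i)=x$.

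Next I would verify transversality and compute the local sign at each intersection point, which is the crux. At $(x,\theta_0,t_i)$, the tangent space $T\mathcal{S}_k$ is spanned by $\partial_\theta$ and $\big(k\,\gamma'(kt_i),\, 0,\, \partial_t\big)$ (the image of $\partial_\theta$ and $\partial_t$ under $D(\Phi\circ\sigma_k)$), while $T\mathcal{T}$ is spanned by $\big(v,\,0,\,0\big)$, where $v$ is the positively-oriented tangent to $\delta$ at $x$, together with $\partial_t$. The spans intersect only in the line through $\partial_t$ — wait, that is not transverse; I must instead observe that $T\mathcal{S}_k + T\mathcal{T}$ is spanned by $\partial_\theta$, $\partial_t$, $k\gamma'(kt_i)$ in the $\Sigma_g$-factor, and $v$ in the $\Sigma_g$-factor. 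Since $\gamma$ meets $\delta$ transversely at $x$, $\{\gamma'(t_0), v\}$ is a basis of $T_x\Sigma_g$, so the sum is all of $T_x\Sigma_g \oplus \R\partial_\theta \oplus \R\partial_t$, which is $4$-dimensional: the intersection is transverse. To get the sign, concatenate an oriented basis of $T\mathcal{S}_k$ followed by an oriented basis of $T\mathcal{T}$ and compare to the product orientation on $\Sigma_g \times (0,1) \times (0,1)$. Using the orientation convention that $\mathcal{S}_k$ is oriented by $(\partial_\theta, \partial_t)$ (pulled back from $(0,1)\times(0,1)$) and $\mathcal{T}$ is oriented by $(v, \partial_t)$ with $v$ chosen so that $\hat\imath([\gamma],[\delta])>0$, i.e. so that $(\gamma'(t_0), v)$ is a positively-oriented basis of $T_x\Sigma_g$, the ordered basis $\big(\partial_\theta;\ k\gamma'(kt_i)+(\text{terms in }\partial_t);\ v;\ \partial_t\big)$ of the ambient space has the same orientation as the product one up to the sign of $k$ (the factor $k$ in $k\gamma'(kt_i) = k\gamma'(t_0)$ carries the sign, since $\gamma(kt_i)=x$ forces $kt_i \equiv t_0$ and hence $\gamma'$ evaluated there is $\gamma'(t_0)$). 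A careful bookkeeping of the permutation needed to bring this basis to the standard order $(v_{\Sigma_g\text{-frame}}, \partial_\theta, \partial_t)$ shows every intersection point contributes $+\operatorname{sign}(k)$, so the total signed count is $|k|\cdot\operatorname{sign}(k) = k$.

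I would organize the write-up as: (1) substitute the explicit formula for $\Phi\circ\sigma_k$ and solve $\gamma(tk)\in\delta$, getting the point set and its cardinality $|k|$; (2) at each point exhibit the two tangent planes, invoke transversality of $\gamma$ and $\delta$ to get transversality of $\mathcal{T}$ and $\mathcal{S}_k$; (3) compute one local sign by an explicit determinant/permutation computation, noting all points give the same sign because $\gamma'(kt_i)$ is always $\gamma'(t_0)$ scaled by $k$; (4) sum. The main obstacle is purely the orientation bookkeeping in step (3): one must be scrupulous about the identification $\mathcal{S}_k = \operatorname{graph}$ of $t\mapsto\gamma(tk)$, about which tangent vector is ``$k\gamma'$'' versus ``$\gamma'$'', and about the ordering conventions tying $\hat\imath([\gamma],[\delta])>0$ to the product orientation of $\mathcal{T}$; a sign error here would flip the answer to $-k$ and potentially still give distinctness but break downstream identities. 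The argument involves no hard machinery — it is entirely a local, elementary transversality-plus-sign computation, so the risk is entirely in the careful choice of conventions, which I would pin down once at the start of \Cref{sec:coords} and reuse.
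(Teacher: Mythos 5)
Your proposal is correct and follows essentially the same route as the paper's proof: the same explicit parametrization $\mathcal S_k = \{(\gamma(kt),\theta,t)\}$ giving the $\lvert k\rvert$ intersection points, the same tangent vector $(k\gamma'(kt_i),0,\partial_t)$ carrying the factor of $k$, and the same reduction of the local sign to the sign of the planar basis $(k\gamma',\partial_\delta)$ via $\hat{i}([\gamma],[\delta])>0$. (The only cosmetic difference is the order in which you concatenate the two tangent planes, which is immaterial here since both are $2$-dimensional.)
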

\begin{proof}
By definitions of $\mathcal S_k$, the section $\sigma_k$, and the diffeomorphism $\Phi$,
\[
	\mathcal S_k = \Phi \circ \sigma_k((0, 1) \times (0, 1)) = \{(\gamma(kt), \theta, t) : \theta \in (0, 1), \, t \in (0, 1)\}.
\]
Because $\delta$ and $\gamma$ intersect only at the point $x \in \Sigma_g$, the intersection $\mathcal T \cap \mathcal S_k$ is contained in $\{(x, \theta_0)\} \times (0, 1)$. There exist exactly $\lvert k \rvert$-many values $t_1, \dots, t_{\lvert k\rvert} \in (0, 1)$ so that $\gamma(kt_i) = x$. This determines the $\lvert k \rvert$-many points in $\mathcal T \cap \mathcal S_k$ as claimed. It now suffices to show that the sign of each intersection point is $\sign(k)$. We may assume that $k \neq 0$ because $\mathcal T \cap \mathcal S_0 = \emptyset$.

The curve $\widehat{\gamma}: (0, 1) \to \Sigma_g \times (0, 1) \times (0, 1)$ defined as
\[
	\widehat{\gamma}: t \mapsto (\gamma(kt), \theta_0, t)
\]
passes through each point $(x, \theta_0, t_i)$ and is contained in $\mathcal S_k$. See Figure \ref{fig:section-tangent}. The tangent space $T_{(x, \theta_0, t_i)}\mathcal S_k$ contains
\[
	\widehat \gamma'(t_i) = (k \gamma'(kt_i), 0, \partial_t) \in T_x \Sigma_g \times T_{\theta_0} (0, 1) \times T_{t_i} (0, 1),
\]
the tangent vector to $\widehat{\gamma}$ at $(x, \theta_0, t_i)$. Let $\partial_\theta \in T_{\theta_0}(0, 1)$ be a tangent vector so that the vectors $(0, \partial_\theta, 0)$ and $(k \gamma'(kt_i), 0, \partial_t)$ forms a positive basis of $T_{(x, \theta_0, t_i)} \mathcal S_k$.

The sign of the intersection point $(x, \theta_0, t_i)$ is the sign of the ordered basis
\[
	(\partial_\delta, 0, 0), \, (0, 0, \partial_t), \, (0, \partial_{\theta}, 0), \, (k \gamma'(kt_i), 0, \partial_t) \in T_{x} \Sigma_g \times T_{\theta_0}(0,1) \times T_{t_i}(0, 1),
\]
where $\partial_\delta$ denotes a positive tangent vector of $T_{x}\delta$. It suffices to compute the sign of the equivalent ordered basis
\[
	(k \gamma'(kt_i), 0, 0), \,(\partial_\delta, 0, 0),\, (0, \partial_{\theta}, 0), \, (k \gamma'(kt_i), 0, \partial_t).
\]
Because $\hat i([\gamma], [\delta]) > 0$ in $\Sigma_g$, the ordered basis formed by $k \gamma'(kt_i)$ and $\partial_\delta$ of $T_x\Sigma_g$ has sign equal to $\sign(k)$. Because $(0, \partial_\theta,0), (k \gamma'(kt_i), 0, \partial_t)$ forms a positive basis of $T_{(x, \theta_0, t_i)}\mathcal S_k$, and the oriented submanifolds $\Sigma_g$ and $\mathcal S_k$ intersect positively in $\Sigma_g \times (0,1) \times (0, 1)$, the sign of $(x, \theta_0, t_i)$ is also equal to $\sign(k)$.
\end{proof}

\begin{figure}
\centering
\includegraphics[width=0.9\textwidth]{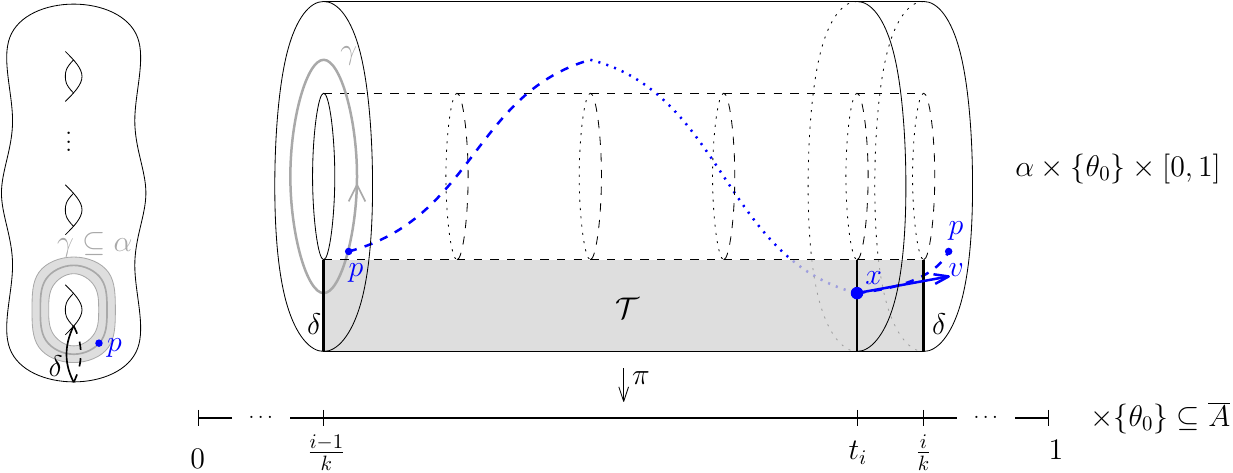}
\caption{Left: The grey annulus in $\Sigma_g$ depicts the neighborhood $\alpha$ of $\gamma$ which is pointwise fixed by the monodromy $\varphi$ over $C$. Middle: The cylinder depicts $\alpha \times \{\theta_0\} \times [0, 1] \subseteq M_\varphi \times [0, 1]$ lying over $\{\theta_0\} \times [0, 1] \subseteq \overline A$. The dotted blue curve depicts the curve $\widehat{\gamma}$ and the vector $v$ denotes its tangent vector $\widehat \gamma'(t_i)$ at the point $(x, \theta_0, t_i)$. The grey rectangle depicts the intersection $\mathcal T \cap (\alpha \times \{\theta_0\}\times [0, 1])$.}\label{fig:section-tangent}
\end{figure}

\subsection{Proof of Theorem \ref{thm:infinite-sections}\ref{thm:homology-distinct}}\label{sec:main-proof}

Recall that $[\delta] = 0 \in H_1(X_i; \Z)$ for both $i = 1, 2$ by assumption in Theorem \ref{thm:infinite-sections} (cf. Remark \ref{rmk:delta-geom}). There exist oriented, $2$-dimensional submanifolds $T_i \subseteq X_i$ with boundary such that
\[
	\Phi(\partial T_1) = \Phi(\overline{\partial T_2}) = \delta \qquad \text{ and } \qquad \Phi(T_1 \cap \pi^{-1}(A)) = \mathcal T \subseteq \Sigma_g \times (0,1) \times (0, 1) \subseteq M_\varphi \times [0, 1].
\]
Here, $\overline{\partial T_2}$ denotes $\partial T_2$ with the opposite orientation. Let $T := T_1 \cup T_2 \subseteq M$, so that $T$ defines a class in $H_2(M; \Z)$. See Figure \ref{fig:T1T2}.
\begin{figure}
\centering
\includegraphics[width=0.6\textwidth]{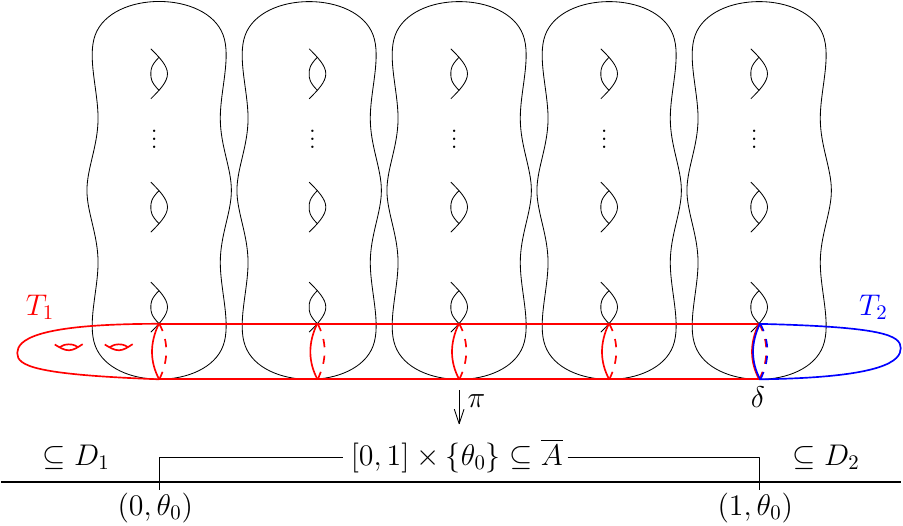}
\caption{The red surface depicts $T_1$. The blue surface depicts $T_2$. The surface $T$ is defined to be the union $T_1 \cup T_2$. If $\delta$ is a vanishing cycle corresponding to a singular value in $D_i$ (for some $i = 1, 2$) then $T_i$ may be chosen to be its thimble contained in $X_i$.}\label{fig:T1T2}
\end{figure}

\begin{lem}\label{lem:intersection}
For all $k \in \Z$,
\[
	Q_{M}([T], [\sigma_k(S^2)] - [\sigma_0(S^2)]) = k.
\]
\end{lem}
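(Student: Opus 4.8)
The plan is to compute the intersection number $Q_M([T], [\sigma_k(S^2)])$ by splitting it according to the decomposition $M = X_1 \cup X_2$ and the decomposition $T = T_1 \cup T_2$, $\sigma_k(S^2) = \sigma_k(D_1) \cup \sigma_k(\overline A) \cup \sigma_k(D_2)$, and then taking the difference with the $k=0$ case to cancel all the contributions away from the annular region. First I would observe that for any $k$, the section $\sigma_k$ agrees with $s = \sigma_0$ over $D_1 \cup D_2$ by construction, so $\sigma_k(S^2)$ and $\sigma_0(S^2)$ differ only over the open annulus $A$; after a small perturbation making everything transverse, the signed count of intersections of $T$ with $\sigma_k(S^2)$ and with $\sigma_0(S^2)$ agree over $\pi^{-1}(D_1 \cup D_2)$, so that
\[
  Q_M([T], [\sigma_k(S^2)] - [\sigma_0(S^2)]) = \#\bigl(T \cap \sigma_k(\overline A)\bigr) - \#\bigl(T \cap \sigma_0(\overline A)\bigr),
\]
where the counts are signed and taken inside $\pi^{-1}(\overline A)$ (one must be slightly careful near $\partial A$, but since $\sigma_k$ and $\sigma_0$ coincide with $s$ on $\partial A$ and $T$ meets $\pi^{-1}(\partial A)$ only along the fixed curve $\delta \subseteq \pi^{-1}(b)$, the boundary contributions cancel, and one can push the relevant intersections into the interior).

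Next I would transport the whole computation through the diffeomorphism $\Phi: \pi^{-1}(\overline A) \to M_\varphi \times [0,1]$, restricting to $\pi^{-1}(A) \cong M_\varphi \times (0,1) \supseteq \Sigma_g \times (0,1) \times (0,1)$. By the defining property $\Phi(T_1 \cap \pi^{-1}(A)) = \mathcal T$ and the fact that $\Phi \circ \sigma_k((0,1)^2) = \mathcal S_k$ (as recorded in the proof of Lemma \ref{lem:key-comp}), the signed count $\#(T \cap \sigma_k(\overline A))$ inside $\pi^{-1}(A)$ becomes exactly the signed intersection number of $\mathcal T$ and $\mathcal S_k$ in $\Sigma_g \times (0,1) \times (0,1)$, which Lemma \ref{lem:key-comp} computes to be $k$ — provided the orientations match up. So the main bookkeeping is to check that the orientation conventions used to orient $\mathcal S_k$ (via $(0,1)\times(0,1)$) and $\mathcal T$ (via the sign condition $\hat i([\gamma],[\delta]) > 0$) are the ones induced from the ambient orientations of $M$, $S^2$, and the fiber, so that the sign coming out of Lemma \ref{lem:key-comp} is the one relevant to $Q_M$. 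Subtracting the $k = 0$ case, which contributes $0$ since $\mathcal S_0 \cap \mathcal T = \emptyset$, yields
\[
  Q_M([T], [\sigma_k(S^2)] - [\sigma_0(S^2)]) = k - 0 = k.
\]

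The main obstacle I anticipate is the orientation/boundary analysis rather than anything deep: one has to verify that the intersections of $T$ with $\sigma_k(S^2)$ genuinely localize to $\pi^{-1}(A)$ after an appropriate small isotopy of $T_1$ rel $\partial T_1$ (so that $T_1$ meets $\sigma_k$ transversely and, over $\pi^{-1}(\partial A)$, not at all beyond the fixed fiber data), and that the identification of signed counts with the signed intersection number of $\mathcal T$ and $\mathcal S_k$ respects all four relevant orientation choices (the product orientation on $\Sigma_g \times (0,1)^2$, the induced orientation on the fiber, the orientation of $\mathcal T$, and the orientation of $\mathcal S_k$). None of this is hard, but it is the part where a sign error would change $k$ to $-k$; the key nontrivial input — that the count over the annulus equals $k$ — is already done in Lemma \ref{lem:key-comp}.
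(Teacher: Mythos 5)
Your proposal is correct and takes essentially the same approach as the paper: localize the intersection computation to $\pi^{-1}(A)$, transport it through $\Phi$, and invoke Lemma \ref{lem:key-comp}, whose count of $k$ (and the emptiness of $\mathcal T \cap \mathcal S_0$) gives the result. The only difference is presentational: the paper represents the difference class $[\sigma_k(S^2)] - [\sigma_0(S^2)]$ by the explicit closed torus $\sigma_k(A) \cup \overline{\sigma_0(A)}$ contained in $\pi^{-1}(A)$, with $\sigma_0(A)$ perturbed off $T$, which makes the localization automatic and sidesteps the boundary bookkeeping near $\partial A$ that you flag as the main obstacle.
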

\begin{proof}
The homology class $[\sigma_k(S^2)] - [\sigma_0(S^2)] \in H_2(M; \Z)$ is represented by the (oriented) torus formed by gluing two annuli
\[
	\Sigma := \sigma_k(A) \cup \overline{\sigma_0(A)},
\]
where we recall that by construction, $\sigma_k|_{\partial A}  = \sigma_0|_{\partial A}$ and $\overline{\sigma_0(A)}$ denotes $\sigma_0(A)$ with the opposite orientation. By perturbing $\sigma_0(A)$ and $\sigma_k(A)$ in an open neighborhood of $\pi^{-1}(A)$, we may arrange so that $\Sigma$ is a smoothly embedded torus and so that $\sigma_0(A)$ is disjoint from $T$. See Figure \ref{fig:section-torus}.
\begin{figure}
\centering
\includegraphics[width=0.45\textwidth]{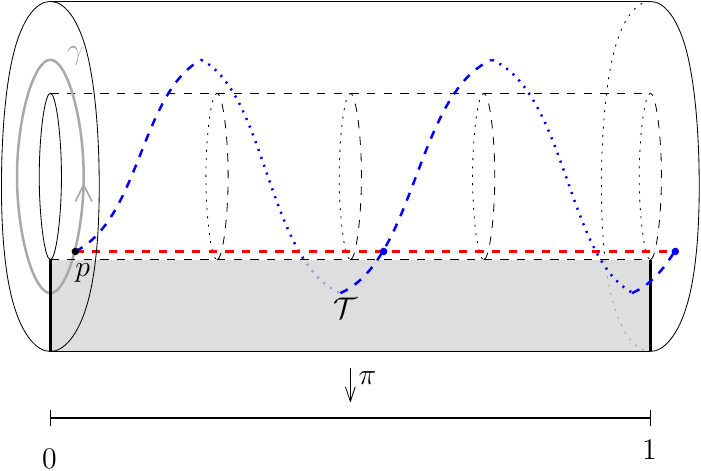} \qquad\qquad
\includegraphics[width=0.45\textwidth]{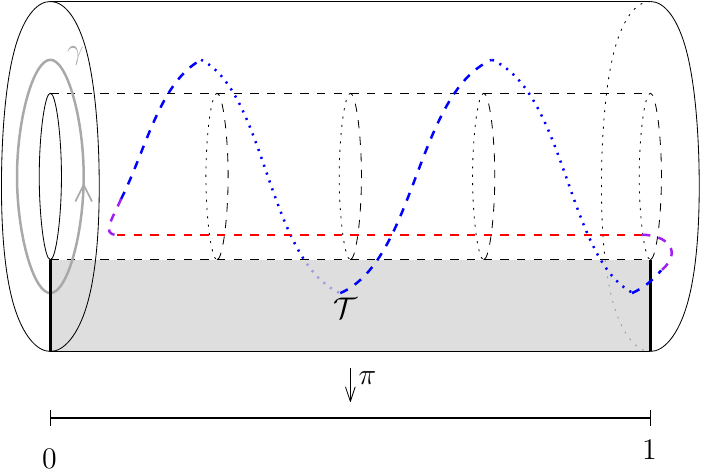}
\caption{The case of $k = 2$. Both cylinders depict $\alpha \times \{\theta_0\} \times [0, 1]$ lying over $\{\theta_0\} \times [0, 1] \subseteq \overline A$ as in Figure \ref{fig:section-tangent}. Left: The red and blue dotted curves depict $\mathcal S_0$ and $\mathcal S_k$ respectively. Right: The curves $\mathcal S_0$ and $\mathcal S_k$ are perturbed in the cylinder $\alpha \times \{\theta_0\} \times [0, 1]$ so that their union $\beta$ forms a smooth circle. The product $\Phi^{-1}(\beta \times S^1) \subseteq \Phi^{-1}(\alpha \times S^1 \times [0, 1])$ is the torus $\Sigma$ representing the class $[\sigma_1(S^2)] - [\sigma_0(S^2)]$.}\label{fig:section-torus}
\end{figure}

Because $\Sigma$ is contained in $\pi^{-1}(A)$, the intersection number $Q_{M}([T], [\Sigma])$ is equal to the signed intersection number between $T_1$ and $\sigma_k(A)$. It suffices to compute the signed intersection number of $\Phi(T_1 \cap \pi^{-1}(A))$ and $\Phi(\sigma_k(A))$ in $M_\varphi \times [0, 1]$ because $\Sigma \subseteq \pi_1^{-1}(A)$ and $\Phi$ is orientation-preserving. Lemma \ref{lem:key-comp} implies that $Q_{M}([T], [\Sigma]) = k$.
\end{proof}

\begin{proof}[Proof of Theorem \ref{thm:infinite-sections}\ref{thm:homology-distinct}]
Suppose that $[\sigma_{k_1}(S^2)] = [\sigma_{k_2}(S^2)]$ in $H_2(M; \Z)$ for some $k_1, k_2 \in \Z$. By Lemma \ref{lem:intersection},
\[
	k_1-k_2 = Q_{M}([T], [\sigma_{k_1}(S^2)] - [\sigma_{k_2}(S^2)]) = 0. \qedhere
\]
\end{proof}

\section{Compatibility with the Gompf--Thurston symplectic form}\label{sec:symp}

The goal of this section is to prove Theorem \ref{thm:infinite-sections}\ref{thm:symplectic-form} by appropriately modifying the Gompf--Thurston construction to produce a symplectic form $\omega$ for which each $\sigma_k(S^2)$ is symplectic. To do so, we closely follow Gompf's argument \cite[Theorem 10.2.18]{gompf-stipsicz}.
Although it may be immediate to the experts, we record how to extract the desired conclusions from Gompf's proofs for the sake of completeness.
The main observation is that the sections $\sigma_k$ differ
only over the annulus $A$. Moreover, the images of all $\sigma_k|_A$
lie on $\gamma \times A$. So, we only need
to control the behavior of a chosen symplectic form over a neighborhood
of $\gamma \times A$.

\begin{proof}[{Proof of Theorem \ref{thm:infinite-sections}\ref{thm:symplectic-form}}]
  Throughout this proof, $H^*$ denotes de Rham cohomology. Let $K \subseteq M$ denote the set of critical points of $\pi$. Let $e \in H^2(M-K; \R) \cong H^2(M; \R)$ denote the Euler class of the oriented $2$-plane bundle $TF \to (M-K)$ of tangent vectors to the fibers of $\pi$.
For any regular fiber $F_y \subseteq M$, the class $(2-2g)^{-1} e \in H^2(M; \R)$ satisfies
\[
	\langle (2-2g)^{-1} e, [F_y] \rangle = 1,
\]
because the restriction of $e$ to $F_y$ is the Euler class of $TF_y \to F_y$. For any closed, smooth surface $S \subseteq M$ of genus $h$ contained in a singular fiber of $\pi$, one can prove using the Poincar\'e--Hopf theorem that $\langle e, [S] \rangle = 2-2h-1$ and hence $\langle (2-2g)^{-1}e, [S] \rangle > 0$. (Recall that by assumption, no singular fiber of $\pi: M \to S^2$ contains an embedded sphere.) Let $\zeta_0$ be a closed $2$-form of $M$ so that \[[\zeta_0] = (2-2g)^{-1}e \in H^2(M; \R).\]

\medskip\noindent
\emph{Step 1: Constructing the $2$-forms $\eta_i$ and the $1$-forms $\beta_i$.} By the proof of \cite[Theorem 10.2.18]{gompf-stipsicz}, there exists a closed $2$-form $\eta_0$ on $M$ with $[\eta_0] = [\zeta_0]$ such that
\begin{enumerate}[(1)]
	\item $\eta_0|_{S}$ is symplectic for any closed surface $S$ contained in a fiber of $\pi$, and
	\item for any $p \in K$ and for some charts $U_p \subseteq \C^2$ and $V_{\pi(p)} \subseteq \C$ on which $\pi$ takes the form $(z_1, z_2) \mapsto z_1^2 + z_2^2$, the restriction $\eta_0|_{U_p}$ is the standard symplectic form $dx_1 \wedge dy_1 + dx_2 \wedge dy_2$.
\end{enumerate}
Also see the proof of \cite[Theorem 2.7]{gompf-symp} for a description of $\eta_0$ around the points of $K$.

We will modify $\eta_0$ by cohomologous $2$-forms on an open cover of $\pi^{-1}(A)$. Let $A' \subseteq A'' \subseteq S^2$ be small open neighborhoods of $A$ containing no critical values of $\pi$ so that $\overline{A} \subseteq A'$ and $\overline{A'} \subseteq A'' = S^1 \times (-\varepsilon, 1 + \varepsilon)$ for some $\varepsilon > 0$. Extend the identification $\Phi$ to $\pi^{-1}(A'') \to M_\varphi \times (-\varepsilon, 1 + \varepsilon)$. Let $A'' = V_1 \cup V_2$ where $V_i$ is an open ball for each $i = 1, 2$.

Fix a unit-area symplectic form $\omega_{\Sigma_g}$ of $\Sigma_g$. Recall that $\varphi$ is chosen so that $\rho_{(\pi, s)}(C) = [\varphi]$ and so that $\varphi$ fixes $\alpha$ pointwise. By Moser's trick, there is a smooth isotopy $\varphi_t: \Sigma_g \to \Sigma_g$ for $t \in [0, 1]$ with $\varphi_0^*\omega_{\Sigma_g} = \omega_{\Sigma_g}$ and $\varphi_1 = \varphi$. By a relative form of Moser's trick \cite[Exercise 3.2.6]{mcduff-salamon}, we may arrange that $\varphi_t$ fixes $\alpha$ pointwise for all $t \in [0, 1]$, after possibly shrinking $\alpha$. Let $\theta_0'$ be the pullback of $\omega_{\Sigma_g}$ on $\Sigma_g \times \R$ via the projection $\Sigma_g \times \R \to \Sigma_g$. Then $\theta_0'$ induces a $2$-form $\theta_0$ on $M_{\varphi_0}$ via the covering map $\Sigma_g \times \R \to M_{\varphi_0}$ because $\theta_0'$ is invariant under deck transformations. 

Because there is an isotopy $\varphi_t$ from $\varphi_0$ to $\varphi$ that fixes $\alpha$ pointwise for every $t$, there is an isomorphism $F: M_{\varphi} \to M_{\varphi_0}$ of $\Sigma_g$-bundles over $S^1$ so that $F$ maps $\gamma \times S^1\subseteq M_{\varphi}$ diffeomorphically onto $\gamma \times S^1 \subseteq M_{\varphi_0}$. The restriction of the $2$-form $\theta := F^*\theta_0$ of $M_{\varphi}$ to $\gamma \times S^1 \subseteq M_\varphi$ satisfies
\[
	\theta|_{\gamma \times S^1 \subseteq M_\varphi} = 0. 
\]

Consider the composition
\[
	\Phi_0: \pi^{-1}(A'') \xrightarrow{\Phi} M_{\varphi} \times (-\varepsilon , 1 + \varepsilon) \xrightarrow{\mathrm{pr}_1} M_{\varphi}
\]
so that $\Phi_0^*\theta$ is a closed $2$-form on $\pi^{-1}(A'')$ that is symplectic along the fibers of $\pi$. On $\pi^{-1}(V_i)$ for each $i = 1, 2$, define the $2$-form
\[
	\eta_i := \Phi_0^*\theta|_{\pi^{-1}(V_i)}.
\]

Now we construct a $1$-form $\beta_i$ on $\pi^{-1}(V_i)$. Recall that $\alpha \subseteq \Sigma_g$ is an annular neighborhood of $\gamma$ fixed pointwise by $\varphi$. By possibly shrinking $\alpha$, let $\alpha \subseteq \alpha' \subseteq \alpha'' \subseteq \Sigma_g$ denote open, annular neighborhoods of $\gamma$ that are pointwise fixed by $\varphi$ so that $\overline{\alpha} \subseteq \alpha'$ and $\overline{\alpha'}\subseteq \alpha''$. There are submanifolds $T$ and $N''$ of $M_\varphi \times (-\varepsilon, 1 + \varepsilon)$, where
\[
	N'' := \alpha'' \times A'' \simeq \gamma \times S^1 \times \{0\}  =: T.
\]
Note that $\Phi_0^*\theta|_{\Phi^{-1}(T)} = 0$, and so $\Phi_0^*\theta|_{\Phi^{-1}(N'')}$ is an exact $2$-form. As $TF|_{\Phi^{-1}(N'')}$ is a trivial bundle, $\zeta_0|_{\Phi^{-1}(N'')}$ is also exact.

By modifying $\zeta_0$ only over $\Phi^{-1}(N'')$, we may find a closed $2$-form $\zeta$ of $M$ so that $[\zeta] = [\zeta_0] \in H^2(M; \R)$ and $\zeta|_{\Phi^{-1}(N')} = \Phi_0^*\theta|_{\Phi^{-1}(N')}$ where $N' := \alpha' \times A'$ is an open submanifold of $N''$. Because $\pi^{-1}(V_i) \simeq \Sigma_g$ and $\langle [\zeta|_{\pi^{-1}(V_i)}], [F_y] \rangle =\langle [\Phi_0^*\theta|_{\pi^{-1}(V_i)}], [F_y]\rangle = 1$ for any regular fiber $F_y$,
\[
	\Phi_0^*\theta|_{\pi^{-1}(V_i)}  - \zeta|_{\pi^{-1}(V_i)} = d\beta_i'
\]
for some $1$-form $\beta_i'$ of $\pi^{-1}(V_i)$. Note that $\beta_i'|_{N_i'}$ is a closed $1$-form restricted to $N_i' := \pi^{-1}(V_i) \cap \Phi^{-1}(N')$.
As $N_i' \simeq \gamma$, there is some closed $1$-form $\tau_i$ on $\pi^{-1}(V_i)$ so that $[\beta_i'|_{N_i'} - \tau_i|_{N_i'}] = 0$.
Thus, by modifying the $1$-form $\beta_i' - \tau_i$ only over $N_i'$, we may obtain a $1$-form $\beta_i$ of $\pi^{-1}(V_i)$ so that
$d\beta_i = d\beta_i'$ and $\beta_i|_{N_i} = 0$, where $N_i:= (\alpha \times A) \cap \pi^{-1}(V_i)$ is an open submanifold of $N_i'$.

\medskip\noindent
\emph{Step 2: Constructing the symplectic form $\omega_{t}$.}
Let $B\subseteq S^2$ be an open set such that $S^2 = A'' \cup B$ and $B \cap A = \emptyset$. There exists a $1$-form $\beta_0$ on $\pi^{-1}(B)$ so that
\[
	\eta_0|_{\pi^{-1}(B)} = \zeta|_{\pi^{-1}(B)} + d\beta_0.
\]
Let $\{\rho_0, \rho_1, \rho_2\}$ be a partition of unity subordinate to the open cover $S^2 = B \cup V_1 \cup V_2$ and consider the $2$-form
\[
	\eta := \zeta + \sum_{i=0}^2 d((\rho_i \circ \pi)\beta_i) = \sum_{i=0}^2 (\rho_i \circ \pi) \eta_i + \sum_{i=0}^2 d(\rho_i \circ \pi) \wedge \beta_i.
\]
The form $\eta$ is symplectic along the smooth loci of the fibers of $\pi$. For any symplectic form $\omega_{S^2}$ of $S^2$ and for any $t > 0$ small enough, the closed $2$-form
\[
	\omega_t := t \eta + \pi^* \omega_{S^2}
\]
is symplectic on $M$ and $\sigma_0(S^2)$ is a symplectic submanifold of $(M, \omega_t)$ by compactness \cite[Proposition 10.2.20]{gompf-stipsicz}. The smooth locus of any fiber of $\pi$ is a symplectic submanifold of $(M, \omega_t)$ by construction of $\eta$.

\medskip\noindent
\emph{Step 3: Restricting to $\sigma_k(S^2)$.} Let $k \neq 0 \in \Z$. The sections $\sigma_0$ and $\sigma_k$ agree on a neighborhood $B_0$ of $S^2 - A$, so the restriction $\omega_{t}|_{\sigma_k(B_0)}$ is symplectic. Recall that $\sigma_k(A) \subseteq N_1 \cup N_2$ and compute that the restriction $\eta|_{N_1 \cup N_2}$ is
\[
	\eta|_{N_1 \cup N_2} = \sum_{i=1}^2 (\rho_i \circ \pi) \eta_i|_{N_i} + \sum_{i=1}^2d(\rho_i \circ \pi)|_{N_i} \wedge \beta_i|_{N_i} =  \sum_{i=1}^2 (\rho_i \circ \pi) \eta_i|_{N_i} = \Phi_0^*\theta|_{N_1 \cup N_2}
\]
by construction of the forms $\beta_i$ and $\eta_i$. The form $\pi^*(\omega_{S^2})$ restricts to a symplectic form on $\sigma_k(A) \subseteq \pi^{-1}(A)$. On the other hand, $\Phi_0(\sigma_k(A)) = \gamma \times S^1 \subseteq M_\varphi$. Because $\theta|_{\gamma \times S^1} = 0$, the restriction $\omega_t|_{\sigma_k(A)}$ is symplectic.
\end{proof}

\section{Proof of Theorem \ref{thm:cor-infty} and other applications of Theorem \ref{thm:infinite-sections}}\label{sec:apps}
The goal of this section is to prove some corollaries of Theorem \ref{thm:infinite-sections}. In Section \ref{sec:cor-infty} we prove that certain fiber sums of Lefschetz fibrations admit infinitely many homologically distinct sections and deduce Theorem \ref{thm:cor-infty}. In Section \ref{sec:indec} we provide an explicit example of a genus-$g$, fiber sum indecomposable Lefschetz fibration satisfying the assumptions of Theorem \ref{thm:infinite-sections} to prove Corollary \ref{cor:intro-indec}.

\subsection{Proof of Theorem \ref{thm:cor-infty} and other corollaries}\label{sec:cor-infty}

Before proving Theorem \ref{thm:cor-infty}, we record some corollaries of Theorem \ref{thm:infinite-sections}.
\begin{cor}\label{cor:untwisted}
Let $\pi: M \to S^2$ be a nontrivial Lefschetz fibration of genus $g \geq 2$ admitting a section $s: S^2 \to M$. Suppose that $T_{\ell_r} \dots T_{\ell_1} = 1 \in \Mod(\Sigma_{g,1})$ is a monodromy factorization of $(\pi, s)$. Fix a loop $\gamma \in \pi_1(\Sigma_g, p)$ such that $\gamma$ intersects some vanishing cycle $\ell_i$ transversely once in $\Sigma_g$. The untwisted fiber sum $\pi \#_F \pi: M \#_F M \to S^2$ admits infinitely many homologically distinct smooth sections $\{\sigma_k: S^2 \to M \#_F M: k \in \Z\}$. A monodromy factorization for $(\pi \#_F \pi, \sigma_k)$ is
\[
	\left(T_{\ell_r} \dots T_{\ell_1}\right)\left(T_{P_\gamma^{k}(\ell_r)}\dots T_{P_\gamma^{k}(\ell_1)}\right) = 1 \in \Mod(\Sigma_{g,1}).
\]
Furthermore, $M \#_F M$ admits a symplectic structure for which the fibers of $\pi \#_F \pi$ and any section $\sigma_k(S^2)$ are all symplectic.
\end{cor}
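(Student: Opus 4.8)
The plan is to obtain Corollary \ref{cor:untwisted} as a direct application of Theorem \ref{thm:infinite-sections} to the self-fiber-sum $\pi\#_F\pi$, so that essentially all the work reduces to checking the two hypotheses of that theorem. First I would record the monodromy data: since $T_{\ell_r}\dots T_{\ell_1} = 1 \in \Mod(\Sigma_{g,1})$ is a monodromy factorization of $(\pi,s)$, the untwisted fiber sum $\pi\#_F\pi$ carries the concatenated factorization
\[
(T_{\ell_{2r}}\dots T_{\ell_{r+1}})(T_{\ell_r}\dots T_{\ell_1}) = 1 \in \Mod(\Sigma_{g,1}), \quad \ell_{r+j} := \ell_j,
\]
and I would take the subword consisting of the first $r$ Dehn twists, so $r_1 = r_2 = r$ in the notation of Theorem \ref{thm:infinite-sections}. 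The feature of the untwisted case that makes everything run is that the two collections $\{\ell_1,\dots,\ell_{r_1}\}$ and $\{\ell_{r_1+1},\dots,\ell_{r_1+r_2}\}$ literally coincide, so the intersection of homology spans in the first hypothesis is simply $\Z\{[\ell_1],\dots,[\ell_r]\} \subseteq H_1(\Sigma_g;\Z)$.

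To verify the first hypothesis I would set $\delta := \ell_i$. Because $\gamma$ meets $\ell_i$ transversely in one point, $\hat i([\gamma],[\ell_i]) = \pm 1$, so $[\ell_i] \ne 0$ and $\ell_i$ is nonseparating; and trivially $[\delta] = [\ell_i] \in \Z\{[\ell_1],\dots,[\ell_r]\}$, which by the previous paragraph is the required intersection. For the second hypothesis, Example \ref{ex:untwisted} shows that with respect to the standard decomposition $S^2 = D_1 \cup A \cup D_2$ adapted to our subword one has $\rho_{(\pi\#_F\pi,\,\sigma)}(C) = T_{\ell_r}\dots T_{\ell_1} = 1 \in \Mod(\Sigma_{g,1})$; hence $(T_{\ell_r}\dots T_{\ell_1})([\gamma]) = [\gamma]$ holds automatically, and it remains only to take $\gamma$ to be an embedded loop based at $p$ meeting $\delta = \ell_i$ transversely once (such loops exist since $\ell_i$ is nonseparating, and this is exactly the datum named in the corollary). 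Feeding $\delta$ and $\gamma$ into Theorem \ref{thm:infinite-sections} then produces the sections $\sigma_k$: part \ref{thm:homology-distinct} gives that they are pairwise homologically distinct, part \ref{thm:symplectic-form} gives the symplectic form for which all fibers and all $\sigma_k(S^2)$ are symplectic, and part \ref{thm:monodromy-factorization} gives $(T_{\ell_{2r}}\dots T_{\ell_{r+1}})(T_{P_\gamma^k(\ell_r)}\dots T_{P_\gamma^k(\ell_1)}) = 1$, which is the asserted factorization once we substitute $\ell_{r+j} = \ell_j$.

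Since Theorem \ref{thm:infinite-sections} does all of the geometric and symplectic work, I do not expect a genuine obstacle; the proof is a matter of bookkeeping. The one point that deserves care is the interplay between the \emph{untwisted} hypothesis and the two conditions of Theorem \ref{thm:infinite-sections}: it is precisely untwistedness that forces the two homology spans to coincide (so that any nonseparating vanishing cycle, in particular $\ell_i$, is an admissible $\delta$) and that forces $\rho(C)$ to be trivial (so that the fixed-loop condition on $\gamma$ is vacuous). For a general twisted fiber sum $M_1\#_{F,\psi}M_2$ neither simplification is available, which is why that case is handled separately.
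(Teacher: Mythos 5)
Your proposal is correct and follows essentially the same route as the paper: take the concatenated factorization $(T_{\ell_r}\dots T_{\ell_1})(T_{\ell_r}\dots T_{\ell_1})=1$ for $s\#_F s$, set $\delta=\ell_i$ (nonseparating since $\hat i([\gamma],[\ell_i])=\pm 1$, and trivially in the intersection of spans because the two collections of vanishing cycles coincide), note via Example \ref{ex:untwisted} that $\rho(C)=1$ makes the fixed-loop condition on $\gamma$ automatic, and invoke Theorem \ref{thm:infinite-sections}. The paper's proof is just a terser version of this same verification.
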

\begin{proof}
The section $s \#_F s: S^2 \to M \#_F M$ of $\pi \#_F \pi: M \#_F M \to S^2$ has a monodromy factorization
\[
	\left(T_{\ell_r} \dots T_{\ell_1}\right)\left(T_{\ell_r} \dots T_{\ell_1}\right) = 1 \in \Mod(\Sigma_{g,1}).
\]
Let $\delta = \ell_i \subseteq \Sigma_{g,1}$. Then $\delta \subseteq \Sigma_g$ is nonseparating because $\hat i([\delta], [\gamma]) = \pm 1$, and hence $[\delta] \neq 0 \in H_1(\Sigma_g; \Z)$. Theorem \ref{thm:infinite-sections} applies to $\pi \#_F \pi: M \#_F M\to S^2$ with the subword $T_{\ell_r} \dots T_{\ell_1}$ and the loop $\gamma \in \pi_1(\Sigma_g, p)$ intersecting $\delta = \ell_i$ transversely once in $\Sigma_g$.
\end{proof}

\begin{rmk}
Any nontrivial Lefschetz fibration $\pi: M \to S^2$ admits a nonseparating vanishing cycle $\ell$ by a theorem of Stipsicz \cite[Theorem 1.3]{stipsicz-vanishing-cycles}. There exists a loop $\gamma \in \pi_1(\Sigma_g, p)$ intersecting $\ell$ transversely once by the change-of-coordinates principle \cite[Section 1.3]{farb-margalit}, so Corollary \ref{cor:untwisted} implies Corollary \ref{cor:intro-untwisted}.
\end{rmk}

\begin{cor}\label{cor:trivial-homology}
For $i = 1, 2$, let $\pi_i: M_i \to S^2$ be a Lefschetz fibration of genus $g \geq 2$ that admits  a section $s_i$. Suppose that $H_1(M_i; \Z) = 0$ for each $i = 1, 2$. Then any fiber sum $\pi_1 \#_{F,\psi} \pi_2: M_1\#_{F,\psi} M_2 \to S^2$ admits infinitely many homologically distinct sections $\{\sigma_k: M_1 \#_{F, \psi}M_2 \to S^2: k \in \Z\}$ and a symplectic structure for which the fibers of $\pi_1 \#_{F,\psi}\pi_2$ and any section $\sigma_k(S^2)$ are all symplectic.
\end{cor}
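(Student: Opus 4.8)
The plan is to verify that the pair $(\pi, s) := (\pi_1 \#_{F,\psi} \pi_2,\, s_1 \#_{F,\psi} s_2)$ satisfies the hypotheses of Theorem \ref{thm:infinite-sections} and then invoke that theorem. By Example \ref{ex:untwisted}, $(\pi, s)$ admits a monodromy factorization
\[
	\left(T_{\ell_{r_1+r_2}} \dots T_{\ell_{r_1+1}}\right) \left(T_{\ell_{r_1}} \dots T_{\ell_1}\right) = 1 \in \Mod(\Sigma_{g,1})
\]
in which $T_{\ell_{r_1}} \dots T_{\ell_1} = 1$ is a monodromy factorization of $(\pi_1, s_1)$ — so $\{\ell_1, \dots, \ell_{r_1}\}$ is a set of vanishing cycles of $\pi_1$ — and $\{\ell_{r_1+1}, \dots, \ell_{r_1+r_2}\}$ is a set of vanishing cycles of $\pi_2$ (read off in the fixed identification of the central fiber with $\Sigma_{g,1}$). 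I take as the subword in Theorem \ref{thm:infinite-sections} the product $T_{\ell_{r_1}} \dots T_{\ell_1}$. Since this product is the trivial mapping class, the relation $(T_{\ell_{r_1}} \dots T_{\ell_1})([\gamma]) = [\gamma]$ holds automatically for \emph{every} $\gamma \in \pi_1(\Sigma_g, p)$, so the second bullet of the theorem imposes no condition beyond producing a suitable pair $(\delta, \gamma)$.

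The first bullet is where the hypothesis $H_1(M_i;\Z) = 0$ enters. I would use the standard computation of the first homology of a Lefschetz fibration over $S^2$ — as in the handle description recalled in Remark \ref{rmk:delta-geom} (cf. \cite[Section 8.2]{gompf-stipsicz}) — namely $H_1(M_i;\Z) \cong H_1(\Sigma_g;\Z)/\langle[\text{vanishing cycles of }\pi_i]\rangle$. Hence $H_1(M_i;\Z) = 0$ forces the vanishing-cycle classes of $\pi_i$ to generate all of $H_1(\Sigma_g;\Z)$; since the collections above are vanishing cycles of $\pi_1$ and of $\pi_2$ respectively (and "spanning $H_1$" is unaffected by the diffeomorphism absorbing $\psi$), this gives
\[
	\Z\{[\ell_1], \dots, [\ell_{r_1}]\} \,=\, \Z\{[\ell_{r_1+1}], \dots, [\ell_{r_1+r_2}]\} \,=\, H_1(\Sigma_g;\Z),
\]
so their intersection is all of $H_1(\Sigma_g;\Z)$. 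Consequently \emph{any} nonseparating simple closed curve $\delta \subseteq \Sigma_{g,1}$ has $[\delta]$ in this intersection. (Equivalently, via Remark \ref{rmk:delta-geom}: $H_1(M_i;\Z) = 0$ forces $H_1(X_i;\Z) = 0$, so every class dies in both $X_1$ and $X_2$.)

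To finish, I fix any nonseparating simple closed curve $\delta \subseteq \Sigma_{g,1}$ disjoint from the marked point $p$, and then — by the change-of-coordinates principle (cf. \cite[Section 1.3]{farb-margalit}) — a simple loop $\gamma: \R/\Z \to \Sigma_g$ with $\gamma(0) = p$ meeting $\delta$ transversely exactly once. Both hypotheses of Theorem \ref{thm:infinite-sections} now hold for the subword $T_{\ell_{r_1}} \dots T_{\ell_1}$ and the pair $(\delta, \gamma)$, and the theorem yields infinitely many pairwise homologically distinct sections $\sigma_k$ of $\pi_1 \#_{F,\psi} \pi_2$ together with a symplectic form for which the fibers and every $\sigma_k(S^2)$ are symplectic, as claimed. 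I expect no essential obstacle: the only point needing care is the bookkeeping in Example \ref{ex:untwisted}, i.e. arranging the two monodromy factorizations in a single fixed identification of the central fiber so that the two spans above genuinely live in one copy of $H_1(\Sigma_g;\Z)$; no estimates or additional geometric input are required.
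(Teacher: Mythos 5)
Your proposal is correct and follows essentially the same route as the paper's proof: both reduce to Example \ref{ex:untwisted}, use the handle-decomposition computation of $H_1$ to show that $H_1(M_i;\Z)=0$ forces the vanishing cycles of each factor to span $H_1(\Sigma_g;\Z)$, and then feed a nonseparating $\delta$ and a dual loop $\gamma$ (via change of coordinates) into Theorem \ref{thm:infinite-sections}. The only cosmetic difference is that the paper takes $\delta$ to be a nonseparating vanishing cycle while you allow any nonseparating simple closed curve; both choices are valid since the relevant intersection of spans is all of $H_1(\Sigma_g;\Z)$.
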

\begin{proof}
  Recall the construction in Example \ref{ex:untwisted}. The monodromy factorization of $(\pi_1 \#_{F,\psi} \pi_2, s_1 \#_{F, \psi} s_2)$ is given by
  \[ (T_{\ell_{r_1 + r_2}}\ldots T_{\ell_{r_1 + 1}}) (T_{\ell_{r_1}} \ldots T_{\ell_{1}})  = 1 \in \Mod(\Sigma_{g,1}), \]
  so that $\{\ell_1,\ldots, \ell_{r_1}\}$ is a set of vanishing cycles of $M_1$ and $\{\ell_{r_1+ 1},\ldots, \ell_{r_1 + r_2} \}$ is a set of vanishing cycles of $M_2$.
  The vanishing cycles of any Lefschetz fibration with at least one section span the kernel of
  the map $H_1(\Sigma_g;\Z) \to H_1(M;\Z)$ induced by inclusion of the regular fiber $\pi^{-1}(b) \cong \Sigma_g$ (e.g. by Mayer--Vietoris and Remark \ref{rmk:delta-geom}). As $H_1(M_i;\Z) = 0$ for $i=1,2$,
  \[ \Z\{[\ell_1],\ldots, [\ell_{r_1}]\} = \Z\{[\ell_{r_1+1}],\ldots, [\ell_{r_1 + r_2}]\} = H_1(\Sigma_g;\Z). \]
  Apply \Cref{thm:infinite-sections} with $\delta = \ell_i$, for any nonseparating vanishing cycle $\ell_i$,
    and $\gamma$ any loop intersecting $\ell_i$ once, which exists by the change-of-coordinates principle~\cite[Section 1.3]{farb-margalit}.
\end{proof}

Theorem \ref{thm:cor-infty} now follows from Corollary \ref{cor:untwisted}.
\begin{proof}[Proof of Theorem \ref{thm:cor-infty}]
For any $g \geq 2$, consider a chain of simple closed curves $c_1, \dots, c_{2g}$ in $\Sigma_g$, so that $c_i$ and $c_{i+1}$ intersect transversely once and $c_i \cap c_j = \emptyset$ if $\lvert i - j \rvert > 2$. By the $(2g)$-chain relation \cite[Proposition 4.12]{farb-margalit}
\[
	(T_{c_1}\dots T_{c_{2g}} )^{4g+2} = 1 \in \Mod(\Sigma_{g,1}).
\]
Let $\pi: M \to S^2$ and $s: S^2 \to M$ be a genus-$g$ Lefschetz fibration and section corresponding to this positive factorization. Applying Corollary \ref{cor:untwisted} to the pair $(\pi, s)$ shows that the untwisted fiber sum $\pi \#_F \pi:  M \#_F M \to S^2$ admits infinitely many homologically distinct sections $\sigma_k$ and $M \#_F M$ admits a symplectic structure for which the fibers of $\pi \#_F \pi$ and any section $\sigma_k(S^2)$ are symplectic.
\end{proof}

\begin{rmk}[Applications to surface bundles and general Lefschetz fibrations]\label{rmk:surface-bundle}
  Similar constructions and arguments can be used to show that certain genus-$g$ Lefschetz fibrations $\pi: M^4 \to \Sigma_h$ (admitting at least one section $s: \Sigma_h \to M$ and possibly with no critical points) have infinitely many homologically distinct sections. For example, let $h= h_1+h_2$ with $h_1, h_2 > 0$ and suppose that for some separating curve $C \subseteq \Sigma_{h_1+h_2} \cong \Sigma_{h_1} \# \Sigma_{h_2}$ along which the connected sum is formed, there exists some $\gamma \in \pi_1(\Sigma_g, p)$ that is fixed by the monodromy $\rho_{(\pi, s)}(C) \in \Mod(\Sigma_{g,1})$ along $C$. Let $A \subseteq \Sigma_{h_1+h_2}$ be an annular neighborhood of $C$. One can modify the section $s$ of $\pi$ over the annulus $A$ as in Section \ref{sec:sections} by twisting along $\gamma \in \pi_1(\Sigma_g, p)$ to create new sections $\sigma_k: \Sigma_{h_1+h_2} \to M$ for each $k \in \Z$. If there exists some $\delta \subseteq \Sigma_{g,1}$ that intersects $\gamma$ once transversely in $\Sigma_g$ and vanishes in $H_1(X_1; \Z)$ and $H_1(X_2; \Z)$ where $X_1$ and $X_2$ are the two connected components of $M - \pi^{-1}(C)$ then the arguments of Section \ref{sec:main-proof} apply directly to show that the sections $\sigma_k$ are pairwise homologically distinct. The Gompf--Thurston construction of Section \ref{sec:symp} also works for Lefschetz fibrations over arbitrary bases $\Sigma_h$, $h \geq 0$, possibly with no critical points, to show that the sections $\sigma_k$ can be made to be symplectic as well. 
\end{rmk}

\subsection{Indecomposable examples}\label{sec:indec}

A Lefschetz fibration is called \emph{indecomposable} if it cannot be written as a fiber sum of two nontrivial Lefschetz fibrations. In this section we provide an example of a genus-$g$, indecomposable Lefschetz fibration admitting infinitely many, homologically distinct symplectic sections for every $g \geq 2$.

\begin{figure}
\centering
\includegraphics[width=0.8\textwidth]{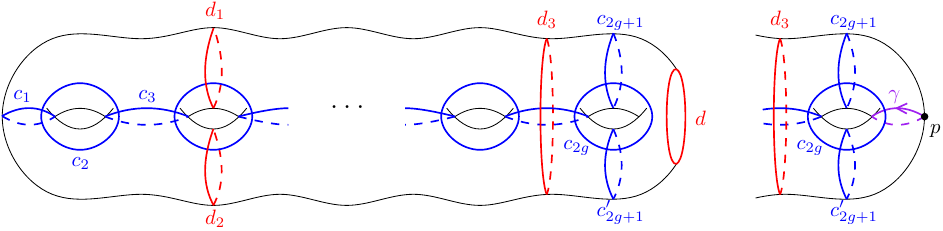}
\caption{The curves that are used in the construction of $(\pi: M_g \to S^2, s: S^2 \to M_g)$. Left: The red curve $d$ is the boundary component of $\Sigma_g^1$. Right: The purple loop $\gamma \in \pi_1(\Sigma_{g}, p)$ is based at the marked point $p$ of $\Sigma_{g,1}$ which is the result of capping $\Sigma_g^1$ by a punctured disk.}\label{fig:indec-curves}
\end{figure}

Consider the curves of $\Sigma_g^1$ depicted in Figure \ref{fig:indec-curves}. The $(2g)$-chain relation \cite[Proposition 4.12]{farb-margalit} gives a factorization of $T_d^2 \in \Mod(\Sigma_g^1)$ by positive Dehn twists
\[
	T_d^2 =(T_{c_1} \dots T_{c_{2g}})^{4g+2}T_d \in \Mod(\Sigma_g^1). 
\]
In what follows, we repeatedly apply the identity (for any $T_\ell$ and $f \in \Mod(\Sigma_g^1)$)
\begin{equation}\label{eqn:moving-twists}
	T_\ell f = f T_{f^{-1}(\ell)} \in \Mod(\Sigma_g^1)
\end{equation}
to obtain new factorizations of $T_d^2$ into positive Dehn twists. By noting that $T_d$ commutes with each twist $T_{c_i}$, $1 \leq i \leq 2g$, and by applying (\ref{eqn:moving-twists}) with $\ell = c_{2g-1}, c_{2g}$ and $f = (T_{c_1} \dots T_{c_{2g-2}})^i$ for $1 \leq i \leq 4g-2$, obtain the factorizations
\begin{align*}
	T_d^2 &= (T_{c_1} \dots T_{c_{2g}})^{4}(T_{c_1} \dots T_{c_{2g}})^{4g-3}(T_{c_1} \dots T_{c_{2g-2}})T_d (T_{c_{2g-1}}T_{c_{2g}}) \\
	 &= (T_{c_1} \dots T_{c_{2g}})^{4}(T_{c_1} \dots T_{c_{2g-2}})^{4g-2} T_d( W T_{c_{2g-1}}T_{c_{2g}}).
\end{align*}
Above, $W$ is a product $2(4g-3)$-many positive Dehn twists about nonseparating curves, each of which is contained in the subgroup $\langle T_{c_i} : 1 \leq i \leq 2g \rangle \leq \Mod(\Sigma_g^1)$. 

By $(2g-2)$- and $3$-chain relations in $\Mod(\Sigma_g^1)$, there is an equality of mapping classes
\begin{align*}
(T_{c_1} \dots T_{c_{2g-2}})^{4g-2} T_d = T_{d_3} T_d = (T_{c_{2g+1}'} T_{c_{2g}} T_{c_{2g+1}})^4.
\end{align*}
Applying this substitution to the factorization of $T_d^2$ obtained above yields a new factorization
\[
T_d^2 = (T_{c_1} \dots T_{c_{2g}})^{4}(T_{c_{2g+1}'} T_{c_{2g}} T_{c_{2g+1}})^4( W T_{c_{2g-1}}T_{c_{2g}}) \in \Mod(\Sigma_g^1).
\]
By applying (\ref{eqn:moving-twists}) repeatedly with $\ell = c_1, c_2, c_3$ and $f = (T_{c_4} \dots T_{c_{2g}})^i$ for $1 \leq i \leq 3$, obtain the factorization
\[
	T_d^2 = (T_{c_1} T_{c_2} T_{c_3})^4 V (T_{c_{2g+1}'} T_{c_{2g}} T_{c_{2g+1}})^4 (W T_{c_{2g-1}} T_{c_{2g}}) \in \Mod(\Sigma_g^1)
\]
where $V$ is a product of $(8g-12)$-many positive Dehn twists about nonseparating curves, each of which is contained in the subgroup $\langle T_{c_i} : 1 \leq i \leq 2g \rangle \leq \Mod(\Sigma_g^1)$. Because $WT_{c_{2g-1}}T_{c_{2g}}$ commutes with $T_d$, we may rearrange so that
\begin{equation}\label{eqn:indec-1}
	T_d^2 = (W T_{c_{2g-1}} T_{c_{2g}})(T_{c_1} T_{c_2} T_{c_3})^4 V (T_{c_{2g+1}'} T_{c_{2g}} T_{c_{2g+1}})^4 \in \Mod(\Sigma_g^1)
\end{equation}
By a $3$-chain relation,
\[
	(T_{c_1} T_{c_2} T_{c_3})^4 = T_{d_1} T_{d_2} \in \Mod(\Sigma_g^1),
\]
which applied to (\ref{eqn:indec-1}) gives the factorization
\begin{equation}\label{eqn:indec-2}
	T_d^2 = (W T_{c_{2g-1}} T_{c_{2g}})(T_{d_1} T_{d_2}) V (T_{c_{2g+1}'} T_{c_{2g}} T_{c_{2g+1}})^4 \in \Mod(\Sigma_g^1).
\end{equation}
Let $(\pi: M_g \to S^2, s: S^2 \to M_g)$ denote the Lefschetz fibration and $(-2)$-section (\cite[Lemma 2.3]{smith}) corresponding to (\ref{eqn:indec-2}).

\begin{lem}\label{lem:Mg-invariants}
The $4$-manifold $M_g$ has Euler characteristic and signature
\[
	\chi(M_g) = 12g + 2, \qquad \sigma(M_g) = -8g-2.
\]
\end{lem}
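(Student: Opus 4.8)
The plan is to obtain $\chi(M_g)$ by counting the critical points of $\pi$, and $\sigma(M_g)$ by tracking how the signature of a Lefschetz fibration changes along the chain of relations used to produce the factorization (\ref{eqn:indec-2}).

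\emph{Euler characteristic.} A genus-$g$ Lefschetz fibration over $S^2$ with $r$ critical points has $\chi = 2\chi(\Sigma_g) + r = (4 - 4g) + r$, since a nodal fiber contributes one more to the Euler characteristic than a regular fiber. The number $r$ equals the number of positive Dehn twists on the right-hand side of (\ref{eqn:indec-2}): the word $W$ accounts for $2(4g-3)$ of them, the displayed words $T_{c_{2g-1}}T_{c_{2g}}$ and $T_{d_1}T_{d_2}$ for $4$, the word $V$ for $8g-12$, and $(T_{c_{2g+1}'}T_{c_{2g}}T_{c_{2g+1}})^4$ for $12$. Hence $r = (8g-6)+4+(8g-12)+12 = 16g-2$ and $\chi(M_g) = (4-4g)+(16g-2) = 12g+2$.

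\emph{Signature.} Here I would use the additivity of the signature of Lefschetz fibrations under substitution of relations, in the sense of Endo--Nagami \cite{endo-nagami} (alternatively, one can compute $\sigma(M_g)$ directly from (\ref{eqn:indec-2}) via Meyer's signature cocycle, at the cost of a longer calculation). The signature is unchanged by Hurwitz moves, by cyclic permutations, and by applications of the identity $T_\ell f = f\,T_{f^{-1}(\ell)}$; and replacing one positive subword by another representing the same mapping class alters $\sigma$ by a ``defect'' that depends only on the relation involved, is additive under composing relations, and changes sign when the relation is inverted. Reading off the derivation of (\ref{eqn:indec-2}) from the $(2g)$-chain relation $T_d^2 = (T_{c_1}\cdots T_{c_{2g}})^{4g+2}T_d$, the only nontrivial modifications are: one application of the $(2g-2)$-chain relation $(T_{c_1}\cdots T_{c_{2g-2}})^{4g-2}=T_{d_3}$, and two applications of the $3$-chain relation $(T_aT_bT_c)^4=T_{\partial_1}T_{\partial_2}$ --- one ``backwards'' (passing from $T_{d_3}T_d$ to $(T_{c_{2g+1}'}T_{c_{2g}}T_{c_{2g+1}})^4$) and one ``forwards'' (passing from $(T_{c_1}T_{c_2}T_{c_3})^4$ to $T_{d_1}T_{d_2}$). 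Since these two uses of the $3$-chain relation occur in opposite directions and the defect depends only on the relation, their contributions cancel, so $\sigma(M_g)$ is governed by the $(2g)$- and $(2g-2)$-chain relations alone.

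It then remains to evaluate the signature of the Lefschetz fibration attached to the $(2g)$-chain relation (the capped relation $(T_{c_1}\cdots T_{c_{2g}})^{4g+2}=1\in\Mod(\Sigma_{g,1})$) and the defect of the $(2g-2)$-chain relation. For this I would invoke the known values of these chain-relation invariants, or compute them from Meyer's signature cocycle; the contributions are linear in $g$ and should assemble to $\sigma(M_g) = -8g-2$. As a consistency check, for $g=2$ the fibration has $30$ critical points and these formulas give $\chi(M_2) = 26$ and $\sigma(M_2) = -18$. I expect this last step to be the main obstacle: pinning down the precise chain-relation defects, handling the effect of capping the boundary curve $d$, and --- above all --- fixing all sign and orientation conventions so that the pieces combine correctly. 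By contrast the Euler characteristic count is routine, and the structural reduction of $\sigma$ to the $(2g)$- and $(2g-2)$-chain contributions, via the cancellation of the two $3$-chain moves, is the key observation I would rely on.
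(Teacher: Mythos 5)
Your Euler characteristic computation is correct and is exactly the paper's: count the $16g-2$ positive Dehn twists in (\ref{eqn:indec-2}) and use $\chi = 4-4g+n$.

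The signature half, however, has a genuine gap: you reduce the problem to ``the signature of the fibration attached to the $(2g)$-chain relation plus the defect of the $(2g-2)$-chain relation'' and then stop, explicitly deferring the evaluation of those two quantities. That deferred step is the actual content of the lemma, and your guiding heuristic for it is wrong: neither contribution is linear in $g$. The capped $(2g)$-chain relation $(T_{c_1}\cdots T_{c_{2g}})^{4g+2}=1$ has $2g(4g+2)$ nonseparating vanishing cycles, so by Endo's hyperelliptic signature formula its fibration has signature $-4g(g+1)$, which is quadratic in $g$; likewise the $(2g-2)$-chain substitution replaces $(2g-2)(4g-2)$ twists by one, so its Endo--Nagami defect is also quadratic in $g$. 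Only the sum is linear, so without actually computing both defects (or at least their sum) you cannot conclude $\sigma(M_g)=-8g-2$. Your structural observation that the two uses of the $3$-chain relation occur in opposite directions and cancel (each has $|I_g|=6$ by Endo--Nagami) is correct and consistent with the paper, but it does not by itself pin down the answer.

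The paper sidesteps this entire bookkeeping with a shortcut you missed: the intermediate factorization (\ref{eqn:indec-1}) is already a positive factorization of the identity in $\Mod(\Sigma_g)$ by $16g+8$ twists about nonseparating curves, all lying in $\langle T_{c_1},\dots,T_{c_{2g}}\rangle$, so the corresponding auxiliary fibration $M_g'$ is hyperelliptic and Endo's formula gives $\sigma(M_g')=(16g+8)\bigl(-\tfrac{g+1}{2g+1}\bigr)=-8g-8$ directly, with no chain-relation defects to track. Passing from (\ref{eqn:indec-1}) to (\ref{eqn:indec-2}) is then a single $\rho$-substitution with $\rho=(T_{c_1}T_{c_2}T_{c_3})^{-4}(T_{d_1}T_{d_2})$ and $I_g(\rho)=6$, whence $\sigma(M_g)=-8g-8+6=-8g-2$. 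If you want to salvage your route, you would need to compute the signature $-4g(g+1)$ of the $(2g)$-chain fibration and the $(2g-2)$-chain defect explicitly and check that everything assembles; the paper's choice of intermediate fibration makes all of that unnecessary.
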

\begin{proof}
The factorization (\ref{eqn:indec-2}) is a product of $(16g -2)$-many positive Dehn twists. The Euler characteristic of a genus-$g$ Lefschetz fibration with $n$-many vanishing cycles is $4 - 4g + n$; letting $n = 16g -2$ shows that $\chi(M_g) = 12g + 2$.

Next, we compute the signature $\sigma(M_g')$ of an auxiliary Lefschetz fibration $\pi': M_g' \to S^2$ defined by (\ref{eqn:indec-1}) viewed as a positive factorization of the identity in $\Mod(\Sigma_g)$. This factorization is a product of $(16g+ 8)$-many Dehn twists about nonseparating curves, each of which lies in the subgroup $\langle T_{c_i} : 1 \leq i \leq 2g \rangle \leq \Mod(\Sigma_g)$. In particular, $\pi: M_g' \to S^2$ is a hyperelliptic Lefschetz fibation, and Endo's signature formula for hyperelliptic Lefschetz fibrations (\cite[Theorems 4.4(2), 4.8]{endo}) shows that
\[
	\sigma(M_g') = (16g+ 8) \left(-\frac{g+1}{2g+1}\right) = -8g-8.
\]

Using the terminology of Endo--Nagami (\cite{endo-nagami}), the factorization (\ref{eqn:indec-2}) is obtained from (\ref{eqn:indec-1}) via a $\rho$-substitution in $\Mod(\Sigma_g)$ with
\[
	\rho = (T_{c_1} T_{c_2} T_{c_3})^{-4}(T_{d_1}T_{d_2}) \in \Mod(\Sigma_{g})
\]
which has signature $I_g(\rho) = 6$ by \cite[Lemma 3.5(1), Proposition 3.10]{endo-nagami}. By \cite[Theorem 4.3]{endo-nagami},
\[
	\sigma(M_g) = \sigma(M_g') + I_g(\rho) = -8g - 2. \qedhere
\]
\end{proof}

The next proposition deduces that $M_g$ must be minimal from Lemma \ref{lem:Mg-invariants} to conclude that $\pi: M_g \to S^2$ is indecomposable.
\begin{prop}\label{prop:Mg-is-indec}
The Lefschetz fibration $\pi: M_g \to S^2$ is indecomposable.
\end{prop}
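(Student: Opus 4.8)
The plan is to use the standard criterion relating fiber-sum indecomposability to minimality: by a theorem of Stipsicz (and independently Smith), a relatively minimal genus-$g$ Lefschetz fibration $\pi: M \to S^2$ with $g \geq 2$ that is minimal (i.e. $M$ contains no embedded $(-1)$-sphere) is fiber-sum indecomposable. Actually, more is true — if a Lefschetz fibration decomposes as a nontrivial fiber sum then $b_2^+(M) \geq 2$ and $M$ is not minimal in a suitable sense, or one can argue via the fact that fiber sums have vanishing Seiberg--Witten invariants while certain minimal symplectic $4$-manifolds do not. So the first step is to reduce the proposition to showing that $M_g$ is minimal, i.e. contains no smoothly embedded sphere of square $-1$.

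The second step is to compute enough characteristic numbers of $M_g$ to rule out the existence of such a sphere, using \Cref{lem:Mg-invariants}. From $\chi(M_g) = 12g+2$ and $\sigma(M_g) = -8g-2$ one extracts $b_2^+(M_g)$ and $c_1^2(M_g) = 2\chi + 3\sigma = 2(12g+2) + 3(-8g-2) = 24g + 4 - 24g - 6 = -2 < 0$, while for the symplectic structure from \Cref{thm:infinite-sections}\ref{thm:symplectic-form} (or the Gompf--Thurston form on $M_g$ directly) we can check $b_2^+(M_g) \geq 2$. By work of Taubes and the structure theory of symplectic $4$-manifolds, a minimal symplectic $4$-manifold with $b_2^+ \geq 2$ has $c_1^2 \geq 0$; since $c_1^2(M_g) = -2 < 0$, the manifold $M_g$ cannot be minimal as it stands. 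Hmm — this runs the wrong way. The correct route is instead: if $M_g$ were \emph{non}-minimal, blowing down $(-1)$-spheres would produce a minimal symplectic manifold $M_g'$ with $c_1^2(M_g') = c_1^2(M_g) + (\text{number blown down}) = -2 + n$; but a genus-$g$ Lefschetz fibration fiber-summed from two nontrivial pieces would force the Euler characteristic and signature to satisfy additivity constraints that are incompatible with the computed values of $\chi(M_g)$ and $\sigma(M_g)$ unless the summands are trivial. So the real content is a numerical incompatibility argument.

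The cleanest approach, and the one I expect the authors take: suppose $\pi: M_g \to S^2$ decomposes as $\pi_1 \#_F \pi_2$ with each $\pi_i: N_i \to S^2$ a nontrivial genus-$g$ Lefschetz fibration. Then $\sigma(M_g) = \sigma(N_1) + \sigma(N_2)$ and the number of vanishing cycles adds, so $\chi(M_g) = \chi(N_1) + \chi(N_2) - \chi(\Sigma_g \times S^2) = \chi(N_1) + \chi(N_2) - (4 - 4g)$. Each $\pi_i$ has at least one vanishing cycle, and relative minimality together with $g \geq 2$ forces $\sigma(N_i) < 0$ (using, e.g., the fact that a genus $\geq 2$ Lefschetz fibration over $S^2$ has negative signature unless it is trivial — or a more careful estimate). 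One then derives a contradiction with $\sigma(M_g) = -8g-2$ being too small in absolute value to split, or with $\chi(M_g) = 12g + 2$ being too small. The main obstacle I anticipate is pinning down exactly which inequality gives the contradiction — I would want the sharp bound that any nontrivial genus-$g$ ($g \geq 2$) Lefschetz fibration over $S^2$ has $c_1^2 < 0$ and Euler characteristic at least some explicit function of $g$, so that two such summands overshoot the computed invariants of $M_g$; alternatively, invoke directly that $M_g$ is shown minimal via its Seiberg--Witten or Gromov invariants (nonvanishing because the fibration admits a section and $b_2^+ > 1$, by Taubes), and then apply Usher's theorem that minimal Lefschetz fibrations over $S^2$ of genus $\geq 2$ are fiber-sum indecomposable. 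I would lead with the latter, as it is the most robust: \textbf{Step 1:} $M_g$ is minimal (Taubes: a symplectic $4$-manifold with $b_2^+ \geq 2$ has nonzero Gromov invariant in the canonical class, hence is minimal once one checks $c_1^2$ and the section obstruct the existence of $(-1)$-spheres, or cite Stipsicz's criterion via the signature bound of \Cref{lem:Mg-invariants}); \textbf{Step 2:} apply Usher's theorem \cite{usher} that a relatively minimal, minimal Lefschetz fibration over $S^2$ is fiber-sum indecomposable. The delicate point is verifying minimality rather than quoting it, and that is where the explicit values $\chi(M_g) = 12g+2$, $\sigma(M_g) = -8g-2$ from \Cref{lem:Mg-invariants} — giving $c_1^2(M_g) = -2$ and forcing, after any hypothetical blow-down and fiber-sum splitting, a contradiction — do the work.
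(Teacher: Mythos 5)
You have the right numerical ingredients but the logic of the final step is reversed, and this is a genuine gap: your plan culminates in ``\textbf{Step 1:} $M_g$ is minimal \dots\ \textbf{Step 2:} apply Usher's theorem that a \emph{minimal} Lefschetz fibration is fiber-sum indecomposable,'' but Step 1 is false and Step 2 misquotes Usher. Usher's result \cite[Corollary 1.2]{usher} runs the other way: a nontrivial fiber sum of relatively minimal Lefschetz fibrations has \emph{minimal} total space, so the contrapositive is that a \emph{non-minimal} total space forces indecomposability. The paper's proof is exactly this: from Lemma \ref{lem:Mg-invariants} one gets $c_1^2(M_g) = 2\chi + 3\sigma = -2 < 0$ and $b^+(M_g) \geq 2g-1 \geq 2$, whence Taubes \cite[Theorem 0.2(3)]{taubes} produces an embedded symplectic $(-1)$-sphere in $M_g$ --- so $M_g$ is \emph{not} minimal --- and Usher then gives indecomposability. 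You actually reached the correct intermediate conclusion (``$M_g$ cannot be minimal'') via the same Taubes-type reasoning, but you flagged it as running ``the wrong way'' and abandoned it, when in fact non-minimality is precisely the hypothesis Usher's theorem needs.

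The fallback you sketch --- additivity of $\sigma$ and $\chi$ under fiber sum plus sharp per-summand bounds to derive a numerical contradiction --- is not carried out and is unlikely to close easily: there is no obvious lower bound on the invariants of an arbitrary nontrivial genus-$g$ summand that would make $\chi(M_g) = 12g+2$, $\sigma(M_g) = -8g-2$ impossible to split. To repair the proof, delete Step 1 as stated, replace it with ``$M_g$ is not minimal by Taubes since $c_1^2(M_g) < 0$ and $b^+(M_g) \geq 2$,'' and cite Usher in the correct direction.
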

\begin{proof}
Lemma \ref{lem:Mg-invariants} shows that $c_1^2(M_g) = 2\chi(M_g) + 3\sigma(M_g) = -2$. It also shows that $b^+(M_g) \geq 2$ because
\[
	b^+(M_g) = \frac{(b^+(M_g) + b^-(M_g)) + \sigma(M_g)}{2} \geq \frac{(\chi(M_g) - 2) + \sigma(M_g)}{2} = 2g-1.
\]
A theorem of Taubes (\cite[Theorem 0.2(3)]{taubes}) implies that $M_g$ equipped with any symplectic form contains an embedded, symplectic $(-1)$-sphere. Usher (\cite[Corollary 1.2]{usher}) showed that any such Lefschetz fibration $\pi: M_g \to S^2$ is indecomposable.
\end{proof}

Finally, we apply Theorem \ref{thm:infinite-sections} to the pair $(\pi: M_g \to S^2, s: S^2 \to M_g)$.
\begin{proof}[Proof of Corollary \ref{cor:intro-indec}]
Take the monodromy factorization (\ref{eqn:indec-2}) of the pair $(\pi, s)$ consider the subword
\[
	(T_{c_{2g+1}'} T_{c_{2g}} T_{c_{2g+1}})^4.
\]
Let $\gamma \in \pi_1(\Sigma_g, p)$ be as shown in Figure \ref{fig:indec-curves}. Then
\[
	(T_{c_{2g+1}'} T_{c_{2g}} T_{c_{2g+1}})^4(\gamma) = T_{d_3}(\gamma)= \gamma \in \pi_1(\Sigma_g, p)
\]
by the $3$-chain relation. Let $\delta = c_{2g}$ so that the Dehn twist $T_{c_{2g}}$ appears in both subwords $(T_{c_{2g+1}'} T_{c_{2g}} T_{c_{2g+1}})^4$ and $(W T_{c_{2g-1}} T_{c_{2g}} )(T_{d_1} T_{d_2}) V$ of (\ref{eqn:indec-2}). Theorem \ref{thm:infinite-sections} now applies to the pair $(\pi: M_g \to S^2, s: S^2 \to M_g)$ with these choices of monodromy subword and curves $\gamma$ and $\delta$. Finally, $\pi: M_g \to S^2$ is indecomposable by Proposition \ref{prop:Mg-is-indec}.
\end{proof}

\section{Isomorphism classes of sections}\label{sec:orbits}

In this section we exhibit examples showing that the number of isomorphism classes of the sections $\{\sigma_k: S^2 \to M: k \in \Z\}$ of $\pi: M \to S^2$ found in Theorem \ref{thm:infinite-sections} varies depending on the choices made in the construction.
Recall from the introduction that for two sections $s_1, s_2: S^2 \to M$ of $\pi: M \to S^2$, the pairs $(\pi, s_1)$ and $(\pi, s_2)$ are \emph{isomorphic} if there exist orientation-preserving diffeomorphisms $\Psi: M \to M$ and $\psi: S^2 \to S^2$ such that the following diagram commutes:
\begin{center}
\begin{tikzcd}
M \arrow[r, "\Psi"] \arrow[d, "\pi"]              & M \arrow[d, "\pi"']               \\
S^2 \arrow[r, "\psi"] \arrow[u, "s_1", bend left] & S^2 \arrow[u, "s_2"', bend right]
\end{tikzcd}
\end{center}
A \emph{positive factorization} is a factorization of some mapping class consisting only of right-handed Dehn twists $T_\ell$. According to Baykur--Hayano \cite[Theorem 1.1]{baykur-hayano}, there is a bijection between isomorphism classes of genus-$g$ Lefschetz fibrations with sections and positive factorizations of the identity in $\Mod(\Sigma_{g,1})$ up to Hurwitz equivalence. Two positive factorizations are said to be \emph{Hurwitz equivalent} if one can be obtained from the other by a sequence of two types of moves:
\begin{enumerate}[(a)]
\item \emph{(Elementary transformation)} For any $1 \leq i \leq r$,
\[
	T_{\ell_r} \dots T_{\ell_{i+1}}  T_{\ell_{i}} \dots T_{\ell_1} \longleftrightarrow T_{\ell_r} \dots (T_{\ell_{i+1}} T_{\ell_{i}} T_{\ell_{i+1}}^{-1}) T_{\ell_{i+1}} \dots T_{\ell_1}.
\]
\item \emph{(Global conjugation)} For any $f \in \Mod(\Sigma_{g,1})$,
\[
	T_{\ell_r} \dots T_{\ell_1} \longleftrightarrow (f T_{\ell_r} f^{-1}) \dots  (f T_{\ell_1}f^{-1}).
\]
\end{enumerate}
Above, note the equality of mapping classes $f T_{\ell_i} f^{-1} = T_{f(\ell_i)}$. Let $\sim$ denote Hurwitz equivalence of positive factorizations.

All examples of this section are untwisted fiber sums with infinitely many sections as constructed in Corollary \ref{cor:untwisted}.

\subsection{On the number of isomorphism classes in $\{\sigma_k: k \in \Z\}$}\label{sec:number-isom}

In this section we record examples of Lefschetz fibrations and infinitely many homologically distinct sections which are (1) pairwise isomorphic (Example \ref{ex:surj-monodromy}), and (2) pairwise non-isomorphic (Example \ref{ex:infty-orbits}).

A lemma of Auroux shows that in fact, the monodromy factorizations of the sections constructed in Corollary \ref{cor:untwisted} are often Hurwitz equivalent.
\begin{lem}[{Auroux, \cite[Lemma 6(b)]{auroux}}]\label{lem:surj-monodromy}
Let $g \geq 2$ and consider a positive factorization
\[
	T_{\ell_r} \dots T_{\ell_1} = 1 \in \Mod(\Sigma_{g,1}).
\]
Suppose that $\langle T_{\ell_1}, \dots, T_{\ell_r} \rangle = \Mod(\Sigma_{g,1})$. For any $\gamma \in \pi_1(\Sigma_g, p)$ and any $k \in \Z$, there is a Hurwitz equivalence
\[
	(T_{\ell_r} \dots T_{\ell_1})(T_{\ell_r} \dots T_{\ell_1})\sim (T_{\ell_r} \dots T_{\ell_1})(T_{P_\gamma(\ell_r)} \dots T_{P_\gamma^k(\ell_1)}) .
\]
\end{lem}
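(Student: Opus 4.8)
The plan is to reduce the claim to a single, concrete observation: the mapping class $P_\gamma$ lies in the group generated by the twists $T_{\ell_1}, \dots, T_{\ell_r}$, because by hypothesis that group is all of $\Mod(\Sigma_{g,1})$. Once we know $P_\gamma = T_{\ell_{i_1}}^{\pm 1} \cdots T_{\ell_{i_m}}^{\pm 1}$ for some word in the given twists (or more precisely a word in the $T_{\ell_j}$ and their inverses), we can try to realize the global conjugation by $P_\gamma$ on the second block using Hurwitz moves that only touch that second block, exploiting that the first block multiplies to the identity. This is exactly the content of Auroux's Lemma 6(b), so strictly speaking one could simply cite it; but since the paper wants to keep the exposition self-contained, I would spell out the mechanism.

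First I would observe that a global conjugation of the whole factorization $(T_{\ell_r}\cdots T_{\ell_1})(T_{\ell_r}\cdots T_{\ell_1})$ by $P_\gamma^{-k}$ produces $(T_{P_\gamma^{-k}(\ell_r)}\cdots T_{P_\gamma^{-k}(\ell_1)})(T_{P_\gamma^{-k}(\ell_r)}\cdots T_{P_\gamma^{-k}(\ell_1)})$, which is Hurwitz equivalent to the original; so it suffices to show that one can push a conjugation by $P_\gamma^{-k}$ from the first block "through" to cancel with the conjugation on the second block — equivalently, that $(T_{\ell_r}\cdots T_{\ell_1})(T_{P_\gamma^k(\ell_r)}\cdots T_{P_\gamma^k(\ell_1)})$ is Hurwitz equivalent to $(T_{P_\gamma^{-k}(\ell_r)}\cdots T_{P_\gamma^{-k}(\ell_1)})(T_{\ell_r}\cdots T_{\ell_1})$ after a global conjugation. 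The standard trick here is the following: if $f = T_{\ell_{i}}$ is one of the twists in the factorization, then using elementary transformations one can slide a copy of $T_{\ell_i}$ (or $T_{\ell_i}^{-1}$) from the front of the second block to the back of the first block, which conjugates every entry of the second block by $T_{\ell_i}^{\mp 1}$ while leaving the total product and the Hurwitz class unchanged; iterating along a word for $P_\gamma^{\mp k}$ in the generators $T_{\ell_j}$ realizes conjugation of the second block by $P_\gamma^{\mp k}$. Then, since the first block equals the identity as a mapping class, one can absorb the accumulated prefix of twists back using the relation $T_{\ell_r}\cdots T_{\ell_1} = 1$, and finally undo the leftover global conjugation.

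Concretely, the key step I would isolate as a sub-lemma is: \emph{given a positive factorization $P\cdot Q = 1$ with $P = T_{\ell_r}\cdots T_{\ell_1}$ and any $f \in \langle T_{\ell_1},\dots,T_{\ell_r}\rangle$, the factorization $P\cdot Q$ is Hurwitz equivalent to $P\cdot (f Q f^{-1})$ when $Q$ is also built from the same twists $T_{\ell_j}$} — here $Q$ will be $T_{\ell_r}\cdots T_{\ell_1}$ itself, and $f = P_\gamma^k$, which lies in $\langle T_{\ell_1},\dots,T_{\ell_r}\rangle = \Mod(\Sigma_{g,1})$ by hypothesis. Proving this sub-lemma amounts to: write $f$ as a word $w$ in the $T_{\ell_j}^{\pm1}$; for a single letter $T_{\ell_j}$, use one elementary transformation on the boundary between $P$ and $Q$ to move a $T_{\ell_j}$ across (this replaces $PQ$ by $P'\,T_{\ell_j}\,Q\,T_{\ell_j}^{-1}$ where $P'$ differs from $P$ by an elementary transformation, hence $P' $ is still a positive factorization of the identity into the same multiset of curves up to the Hurwitz moves); then use that $P$ is a factorization of the identity to reorganize, so after processing the full word $w$ one recovers $P$ in front and $fQf^{-1}$ behind, up to Hurwitz moves. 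For the inverse letters $T_{\ell_j}^{-1}$ one moves in the opposite direction. Iterating over the $|k|$ copies handles $f = P_\gamma^k$.

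The main obstacle, and the place where care is needed, is the bookkeeping in the sliding argument: elementary transformations applied at the $P$–$Q$ junction modify $P$ itself (they replace some $T_{\ell_i} T_{\ell_{i-1}}$ by its Hurwitz-conjugate), so one must check that after sliding a full word $w$ for $P_\gamma^k$ across, what remains to the left really is a positive factorization Hurwitz equivalent to the original $P$, rather than some other factorization of the identity. The cleanest way to control this is to work with the global-conjugation description: sliding $T_{\ell_j}$ across the junction and then performing a global conjugation by $T_{\ell_j}^{-1}$ on the \emph{entire} resulting factorization returns the left block to $P$ exactly while multiplying the right block's entries by an extra conjugation — and after $w$ steps the global conjugations compose to conjugation by $f^{-1}$, which one then cancels against the $f$-conjugation, landing on $P \cdot (f Q f^{-1})$ precisely. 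With $Q = P$ and $f = P_\gamma^k$ this is exactly the asserted Hurwitz equivalence $(T_{\ell_r}\cdots T_{\ell_1})(T_{\ell_r}\cdots T_{\ell_1}) \sim (T_{\ell_r}\cdots T_{\ell_1})(T_{P_\gamma^k(\ell_r)}\cdots T_{P_\gamma^k(\ell_1)})$.
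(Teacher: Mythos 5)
Your opening reduction is exactly the paper's proof: the only content the paper supplies is that $P_\gamma^k$ lies in $\langle T_{\ell_1},\dots,T_{\ell_r}\rangle = \Mod(\Sigma_{g,1})$, after which it cites Auroux's Lemma 6(b) verbatim. Had you stopped there, the two arguments would coincide. The problem is with the mechanism you then spell out for Auroux's lemma: as written it does not work, and in two places it invokes operations that are not Hurwitz moves. First, ``absorbing the accumulated prefix of twists back using the relation $T_{\ell_r}\cdots T_{\ell_1}=1$'' is not a legal step: Hurwitz equivalence is an equivalence of ordered tuples of Dehn twists, it preserves the number of factors, and one may never use a group relation to cancel or insert factors. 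Second, your ``cleanest way'' is circular. Concretely: if you slide a factor $T$ from the right end of the first block all the way through the second block $Q$, you arrive at $\tilde P\cdot (TQT^{-1})\cdot T$ with a stray $T$ at the far right; a global conjugation by $T^{-1}$ then sends each entry $TQ_iT^{-1}$ back to $Q_i$, so it is the \emph{second} block that is restored and the \emph{first} block that gets conjugated --- the opposite of what you assert. Tracking this through a whole word for $P_\gamma^k$ just reproduces the original factorization up to a global conjugation; it never produces $P\cdot(fQf^{-1})$.

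The ingredient you are missing is that the second block $Q=T_{\ell_r}\cdots T_{\ell_1}$ is itself a factorization of the \emph{identity}, hence has central product. This is what lets one transport without side effects: moving the entire block $Q$ leftward (or rightward) past any factor $X$, using the elementary transformation that leaves the entries of $Q$ unchanged, conjugates $X$ by the product of $Q$, which is trivial. So the correct single-letter step is: slide $T_{\ell_j}$ rightward through $Q$ (this replaces $Q$ by $T_{\ell_j}QT_{\ell_j}^{-1}$ entrywise and strands $T_{\ell_j}$ at the far right), and then transport $T_{\ell_j}$ back leftward past the conjugated block --- legal with no further conjugation precisely because that block still has product $1$. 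This lands on $P\cdot\bigl(T_{\ell_j}QT_{\ell_j}^{-1}\bigr)$ exactly, with no global conjugation needed, and iterating over a word for $P_\gamma^{k}$ in the $T_{\ell_j}^{\pm1}$ gives the lemma. (The inverse letters are handled symmetrically by first transporting $Q$ to the left of $T_{\ell_j}$.) Your hypothesis ``$Q$ is built from the same twists'' is a red herring; what matters is that $Q$ is an identity factorization and that $f$ is a word in the factors of $P$.
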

\begin{proof}
By assumption, $P_{\gamma} \in \Mod(\Sigma_{g,1})$ is contained in $\langle T_{\ell_1}, \, \dots, \, T_{\ell_r} \rangle$. By \cite[Lemma 6(b)]{auroux}, the factorization $(T_{\ell_r} \dots T_{\ell_1})$ is Hurwitz equivalent to $(T_{P_\gamma^k(\ell_r)} \dots T_{P_{\gamma}^k(\ell_1)})$ through a sequence of elementary transformations.
\end{proof}

\begin{example}\label{ex:surj-monodromy}
For a simple example of a positive factorization of the identity in $\Mod(\Sigma_{g,1})$ whose factors generate $\Mod(\Sigma_{g,1})$, consider the generators $T_{c_i}$ of $\Mod(\Sigma_{g,1})$, where $c_0, \dots, c_{2g+1} \subseteq \Sigma_{g,1}$ are depicted in Figure \ref{fig:curves-isomorphisms} (cf. \cite[Section 4.4.4]{farb-margalit}). Applying the $(2g+1)$-chain relation \cite[Proposition 4.12]{farb-margalit} twice shows that
\[
	(T_{c_1} \dots T_{c_{2g+1}})^{2g+2} (T_{f(c_1)} \dots T_{f(c_{2g+1})})^{2g+2} = 1 \in \Mod(\Sigma_{g,1}),
\]
where $f \in \Mod(\Sigma_{g, 1})$ is such that $f(c_1) = c_0$. Other examples of such factorizations can also be found in \cite[Section 3]{auroux}.

More generally, consider any genus-$g$ Lefschetz fibration $\pi: M \to S^2$ and section $s: S^2 \to M$ whose monodromy representation $\rho_{(\pi, s)}$ is surjective onto $\Mod(\Sigma_{g,1})$. Consider a monodromy factorization
\[
	T_{\ell_r} \dots T_{\ell_1} = 1 \in \Mod(\Sigma_{g,1})
\]
of $(\pi, s)$ and consider the sections $\{\sigma_k: S^2 \to M \#_F M : k \in \Z\}$ of the untwisted fiber sum $\pi \#_F \pi: M \#_F M \to S^2$ obtained in Corollary \ref{cor:untwisted} with monodromy factorizations
\[
	\left(T_{\ell_r} \dots T_{\ell_1}\right)\left(T_{P_\gamma^{k}(\ell_r)} \dots T_{P_\gamma^{k}(\ell_1)} \right)  = 1 \in \Mod(\Sigma_{g,1})
\]
for some $\gamma \in \pi_1(\Sigma_g, p)$. Lemma \ref{lem:surj-monodromy} shows that the monodromy factorizations of $(\pi, \sigma_0)$ and $(\pi, \sigma_k)$ are Hurwitz equivalent for all $k \in \Z$, and so $(\pi, \sigma_0)$ and $(\pi, \sigma_k)$ are isomorphic for all $k \in \Z$ by Baykur--Hayano \cite[Theorem 1.1]{baykur-hayano}.
\end{example}

\begin{figure}
\centering
\includegraphics[width=0.7\textwidth]{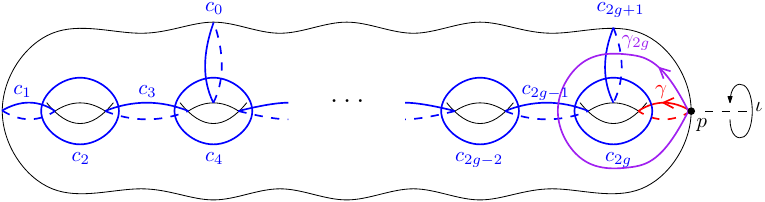}
\caption{The blue curves define the isotopy classes $c_0, \dots, c_{2g+1}$ of the curves in $\Sigma_{g,1}$ where $p$ is the marked point. The loops $\gamma_{2g}$ and $\gamma$ define elements of $\pi_1(\Sigma_g, p)$. The hyperelliptic involution $\iota$ fixes the marked point $p$ and preserves the isotopy classes $c_1, \dots, c_{2g}$.}\label{fig:curves-isomorphisms}
\end{figure}
\begin{rmk} \Cref{lem:surj-monodromy} suggests that the size of the monodromy group of a Lefschetz fibration relative to the point pushing subgroup $\pi_1(\Sigma_g, p) \leq \Mod(\Sigma_{g,1})$ controls the number of isomorphism classes of sections constructed in this paper. In light of this, we study hyperelliptic Lefschetz fibrations (cf. Brendle--Margalit \cite[Theorem 3.1]{brendle-margalit}) below to find examples of Lefschetz fibrations with infinitely many isomorphism classes of sections.
\end{rmk}

Before proceeding with the next example, we state an algebraic lemma whose proof is postponed to the end of this subsection.
\begin{lem}\label{lem:not-conjugate-monodromy}
For any $k \in \Z$, let $G_k \leq \Mod(\Sigma_{g,1})$ denote the subgroup
\[
	G_k := \langle T_{c_1}, \, \dots, \, T_{c_{2g}}, \, P_{\gamma^{-k}(\gamma_{2g}\gamma)^k} \rangle \leq \Mod(\Sigma_{g,1})
\]
where $c_1, \dots, c_{2g}, \gamma$, and $\gamma_{2g}$ are as shown in Figure \ref{fig:curves-isomorphisms}. If $\lvert k_1 \rvert \neq \lvert k_2 \rvert$ then $G_{k_1}$ and $G_{k_2}$ are not conjugate in $\Mod(\Sigma_{g,1})$.
\end{lem}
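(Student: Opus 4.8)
The plan is to distinguish the subgroups $G_k$ by a conjugation-invariant quantity. Since all of the $G_k$ contain the common subgroup $H := \langle T_{c_1}, \dots, T_{c_{2g}}\rangle$, which is (by Brendle--Margalit \cite{brendle-margalit}, cf. the remark preceding the statement) the hyperelliptic mapping class group $\SMod(\Sigma_{g,1})$ — the centralizer of the hyperelliptic involution $\iota$ fixing $p$ — the key will be to understand how the extra generator $P_{\gamma^{-k}(\gamma_{2g}\gamma)^k}$ sits relative to $H$. First I would project to $\Mod(\Sigma_g)$ via the Birman sequence \eqref{eqn:birman}: the image of $G_k$ is $\langle T_{c_1},\dots,T_{c_{2g}}\rangle = \SMod(\Sigma_g)$ independently of $k$, so $G_k \cap \pi_1(\Sigma_g,p)$ is a normal subgroup of $G_k$ containing $P_{\gamma^{-k}(\gamma_{2g}\gamma)^k}$, and any conjugacy $G_{k_1}\to G_{k_2}$ by $f\in\Mod(\Sigma_{g,1})$ must carry this intersection to the corresponding one. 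I would then argue that $f$ normalizes (the relevant part of) $\SMod$ and hence — using that the normalizer of $\SMod(\Sigma_g)$ in $\Mod(\Sigma_g)$ is $\SMod(\Sigma_g)$ itself for $g\geq 2$ (again by Brendle--Margalit-type results, or by noting $\iota$ is determined up to conjugacy and is central in $\SMod$) — that the image $\bar f\in\Mod(\Sigma_g)$ lies in $\SMod(\Sigma_g)$, i.e. commutes with $\iota$.

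The heart of the argument is then the following: the element $\gamma^{-k}(\gamma_{2g}\gamma)^k \in \pi_1(\Sigma_g,p)$, under the action of $\SMod(\Sigma_{g,1})$ on the point-pushing subgroup $\pi_1(\Sigma_g,p)$, has a conjugacy-invariant that detects $|k|$. The natural candidate is the image of this element in $H_1(\Sigma_g;\Z)$, or better, in an appropriate quotient of $\pi_1(\Sigma_g,p)$ on which $\SMod$ acts through a finite or well-understood quotient. Concretely: $\iota$ acts on $H_1(\Sigma_g;\Z)$ as $-\mathrm{Id}$, so $\SMod(\Sigma_g)$ acts on $H_1(\Sigma_g;\Z)$ through the centralizer of $-\mathrm{Id}$, which is all of $\mathrm{Sp}(2g,\Z)$ — not immediately restrictive. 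So instead I would pass to $H_1$ of a \emph{double cover}, or use the Birman--Hilden correspondence: $\SMod(\Sigma_{g,1})$ maps to the mapping class group of the quotient orbifold (a sphere with marked points), and the point-pushing element $P_{\gamma^{-k}(\gamma_{2g}\gamma)^k}$ maps to something whose behavior is controlled. The cleanest route: compute $[\gamma^{-k}(\gamma_{2g}\gamma)^k] = [\gamma_{2g}\gamma] - [\gamma] + (\text{commutator terms})$... actually in $H_1$ this is just $k\cdot[\gamma_{2g}]$ if $[\gamma]$ and $[\gamma_{2g}]$ commute appropriately — I expect $[\gamma^{-k}(\gamma_{2g}\gamma)^k] = k[\gamma_{2g}] \in H_1(\Sigma_g;\Z)$ after a short computation. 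Then for $f$ conjugating $G_{k_1}$ to $G_{k_2}$, the induced map $\bar f_* $ on $H_1$ sends $k_1[\gamma_{2g}]$ to a generator of the cyclic group $G_{k_2}\cap\pi_1(\Sigma_g,p)$ pushed to $H_1$, up to sign — but since $\bar f_*$ is an isomorphism of $H_1(\Sigma_g;\Z)$, it sends the primitive vector $[\gamma_{2g}]$ to a primitive vector, forcing $k_1[\gamma_{2g}] \mapsto \pm k_2 \cdot(\text{primitive})$, whence $|k_1| = |k_2|$.

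The main obstacle I anticipate is the first reduction step — showing that a conjugating element $f$ must (after accounting for the Birman sequence) act compatibly with the hyperelliptic structure, so that its action on $\pi_1(\Sigma_g,p)$ or on $H_1(\Sigma_g;\Z)$ is constrained. One must be careful that $f$ need not itself lie in $\SMod(\Sigma_{g,1})$; only its conjugation action on the specific subgroup $\langle T_{c_1},\dots,T_{c_{2g}}\rangle$ is constrained, and one has to rule out that $f$ mixes the point-pushing part with the $\SMod$ part in a way that changes $|k|$. I would handle this by observing that $G_k \cap \mathrm{Push}(\pi_1(\Sigma_g,p))$ is an \emph{intrinsic} subgroup of $G_k$ (it is $\ker(G_k \to \Mod(\Sigma_g))$, hence preserved by any abstract isomorphism $G_{k_1}\cong G_{k_2}$ induced by conjugation, provided the image in $\Mod(\Sigma_g)$ is the same group $\SMod(\Sigma_g)$ for both — which it is), and then this kernel is an infinite cyclic group generated by $P_{\gamma^{-k}(\gamma_{2g}\gamma)^k}$ up to the $\SMod$-action; comparing generators via the $H_1$-invariant finishes the proof. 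If the kernel turns out to be larger than cyclic (e.g. if $\SMod$-conjugates of the generator also lie in $G_k$), I would instead compare the $\SMod(\Sigma_{g,1})$-orbits of $[\gamma^{-k}(\gamma_{2g}\gamma)^k]$ in $H_1(\Sigma_g;\Z)$ directly, using that $\SMod(\Sigma_g)$ acts on $H_1$ through $\mathrm{Sp}(2g,\Z)$ and that the orbit of a vector under $\mathrm{Sp}(2g,\Z)$ is determined by its divisibility, so $\mathrm{Sp}(2g,\Z)\cdot(k_1 v) = \mathrm{Sp}(2g,\Z)\cdot(k_2 v)$ forces $|k_1| = |k_2|$ for $v$ primitive.
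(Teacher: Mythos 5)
Your overall strategy coincides with the paper's: intersect $G_k$ with the (normal) point-pushing subgroup, pass to $H_1(\Sigma_g;\Z)$ where $\gamma^{-k}(\gamma_{2g}\gamma)^k$ becomes $k[\gamma_{2g}]$ with $[\gamma_{2g}]$ primitive, and use that divisibility is preserved by the induced symplectic automorphism to recover $\lvert k\rvert$. Two remarks on the reduction step, though. First, the detour through the normalizer of $\SMod(\Sigma_g)$ is unnecessary: since $\mathrm{Push}(\pi_1(\Sigma_g,p))$ is normal in all of $\Mod(\Sigma_{g,1})$, any conjugating $f$ automatically carries $G_{k_1}\cap\pi_1(\Sigma_g,p)$ onto $G_{k_2}\cap\pi_1(\Sigma_g,p)$, and $f_*$ preserves divisibility in $H_1(\Sigma_g;\Z)$ whether or not $f$ is hyperelliptic; you never need to locate $\bar f$ inside $\SMod(\Sigma_g)$.

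Second, and this is the one genuine gap: your invariant only works once you know that $G_k\cap\mathrm{Push}(\pi_1(\Sigma_g,p))$ is exactly $H_k$, the normal closure in $G_k$ of $P_{\gamma^{-k}(\gamma_{2g}\gamma)^k}$ under conjugation by $G_0:=\langle T_{c_1},\dots,T_{c_{2g}}\rangle$ --- for then its image in $H_1(\Sigma_g;\Z)$ is generated by elements $k\cdot(\text{primitive})$, lies in $kH_1(\Sigma_g;\Z)$, and has minimal nonzero divisibility $\lvert k\rvert$. The containment $H_k\subseteq G_k\cap\pi_1(\Sigma_g,p)$ is clear, but the reverse containment is equivalent to $G_0\cap\pi_1(\Sigma_g,p)=1$: writing a general element of $G_k$ as $hg_0$ with $h\in H_k$ and $g_0\in G_0$, any excess in the kernel comes precisely from $\pi_1(\Sigma_g,p)\cap G_0$, and such excess elements could have arbitrary $H_1$-classes and destroy the divisibility invariant. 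Your proposal hedges here (``if the kernel turns out to be larger than cyclic\dots''), and your fallback of comparing orbits of the single class $k[\gamma_{2g}]$ does not repair this, since the conjugation-invariant object is the whole subgroup $G_k\cap\pi_1(\Sigma_g,p)$, not one element. The paper closes this exact gap (its Lemma \ref{lem:ker-Gk}) by observing that $G_0$ lies in $\SMod(\Sigma_{g,1})$ and invoking Brendle--Margalit \cite[Theorem 3.1]{brendle-margalit} to conclude that the Birman forgetful map is injective on $\SMod(\Sigma_{g,1})$, hence $G_0\cap\pi_1(\Sigma_g,p)=1$. You cite Brendle--Margalit, but for a different (and inessential) purpose; redirect it to this step and your argument goes through.
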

Using Lemma \ref{lem:not-conjugate-monodromy} the following example exhibits a Lefschetz fibration with infinitely many pairwise non-isomorphic sections.
\begin{example}\label{ex:infty-orbits}
For any $g \geq 2$, consider the genus-$g$ Lefschetz fibration $\pi: M \to S^2$ and section $s: S^2 \to M$ determined by the $(2g)$-chain relation \cite[Proposition 4.12]{farb-margalit}
\[
	(T_{c_1} \dots T_{c_{2g}})^{4g+2} = 1 \in \Mod(\Sigma_{g,1}).
\]
Consider the sections $\{\sigma_k: S^2 \to M \#_F M :  k \in \Z\} $ of the untwisted fiber sum $\pi \#_F \pi: M \#_F M \to S^2$ obtained in Corollary \ref{cor:untwisted} with monodromy factorization
\[
	 (T_{c_1} \dots T_{c_{2g}}) (T_{P_\gamma^{k}(c_1)}\dots T_{P_\gamma^{k}(c_{2g})}) = 1 \in \Mod(\Sigma_{g,1}).
\]
The image of the monodromy representation of $(\pi\#_F \pi, \sigma_k)$ is
\[
	G_k := \langle T_{c_1}, \dots, \, T_{c_{2g}}, \, T_{P_{\gamma}^k(c_{2g})}\rangle = \langle T_{c_1}, \dots, \, T_{c_{2g}}, \, P_{\gamma^{-k}(\gamma_{2g}\gamma)^k}\rangle \leq \Mod(\Sigma_{g,1}),
\]
where the last equality follows because $T_{c_{2g}}T_{P_{\gamma}^k(c_{2g})}^{-1} = P_{\gamma^{-k}(\gamma_{2g}\gamma)^k}$. By Lemma \ref{lem:not-conjugate-monodromy}, the subgroups $G_{k_1}$ and $G_{k_2}$ are not conjugate in $\Mod(\Sigma_{g,1})$ if $\lvert k_1 \rvert \neq \lvert k_2 \rvert$. Therefore the monodromy factorizations of $(\pi \#_F \pi, \sigma_{k_1})$ and $(\pi \#_F \pi, \sigma_{k_2})$ are not Hurwitz equivalent, so the sections $(\pi \#_F \pi, \sigma_{k_1})$ and $(\pi \#_F \pi, \sigma_{k_2})$ are not isomorphic if $\lvert k_1 \rvert \neq \lvert k_2 \rvert$ by Baykur--Hayano \cite[Theorem 1.1]{baykur-hayano}.
\end{example}

It remains to prove Lemma \ref{lem:not-conjugate-monodromy}. To do so, we determine the kernel of the forgetful map $\Mod(\Sigma_{g,1}) \to \Mod(\Sigma_g)$ in the Birman exact sequence (\ref{eqn:birman}) restricted to $G_k$ for each $k\in \Z$.
\begin{lem}\label{lem:ker-Gk}
Let $H_k \trianglelefteq G_k$ be the subgroup normally generated by $P_{\gamma^{-k}(\gamma_{2g}\gamma)^k}$ in $G_k$, i.e.
\[
	H_k := \langle f P_{\gamma^{-k}(\gamma_{2g}\gamma)^k} f^{-1} : f \in G_0 \rangle.
\]
The kernel of the map $\mathrm{Forget}: \Mod(\Sigma_{g,1}) \to \Mod(\Sigma_g)$ of the Birman exact sequence (\ref{eqn:birman}) restricted to $G_k \leq \Mod(\Sigma_{g,1})$ is $H_k$. In other words, the Birman exact sequence restricts to give a split short exact sequence
\[
	1 \to H_k \to G_k \to G_0 \to 1.
\]
\end{lem}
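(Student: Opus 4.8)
The plan is to identify $H_k$ with the kernel of $\mathrm{Forget}$ restricted to $G_k$, and then to produce the splitting by lifting $\mathrm{Forget}(G_k)$ back into $G_k$ through the subgroup $G_0$. Write $F := \mathrm{Forget}$ and $P := P_{\gamma^{-k}(\gamma_{2g}\gamma)^k}$, so that $G_k = \langle G_0, P \rangle$ with $G_0 = \langle T_{c_1}, \dots, T_{c_{2g}} \rangle$. Since each $c_i$ is disjoint from the marked point $p$ and $P$ is a point push, we have $F(G_k) = F(G_0) = \langle T_{c_1}, \dots, T_{c_{2g}} \rangle \leq \Mod(\Sigma_g)$ and $P \in \ker F$. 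As $\ker F = \mathrm{Push}(\pi_1(\Sigma_g, p))$ is normal in $\Mod(\Sigma_{g,1})$ by \eqref{eqn:birman}, the subgroup $H_k$ (whether one takes the normal closure of $P$ in $G_k$ or the $G_0$-conjugates $\langle f P f^{-1} : f \in G_0 \rangle$; these will turn out to coincide) is contained in $\ker(F|_{G_k})$.

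The heart of the argument is the claim that $F|_{G_0} \colon G_0 \to \Mod(\Sigma_g)$ is \emph{injective}. Granting this, I would finish as follows. Any $x \in G_k$ can be written as $x = g_0 P^{\varepsilon_1} g_1 \cdots P^{\varepsilon_m} g_m$ with $g_i \in G_0$ and $\varepsilon_i = \pm 1$; inserting telescoping products of the $g_i$ and collecting the point-push letters to the left yields $x = h \cdot (g_0 g_1 \cdots g_m)$, where $h$ is a product of $G_0$-conjugates of $P^{\pm 1}$, hence $h \in H_k$ and $x \equiv g_0 g_1 \cdots g_m \pmod{H_k}$. If moreover $F(x) = 1$, then $F(g_0 \cdots g_m) = 1$, so injectivity of $F|_{G_0}$ forces $g_0 \cdots g_m = 1$ and $x \in H_k$. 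Thus $\ker(F|_{G_k}) = H_k$ (which in particular shows $H_k \trianglelefteq G_k$ and that the two descriptions of $H_k$ agree), giving exactness of $1 \to H_k \to G_k \xrightarrow{F} F(G_k) \to 1$; identifying $F(G_k) \cong G_0$ via the isomorphism $F|_{G_0}$, the inclusion $G_0 \hookrightarrow G_k$ is a section of $G_k \to G_0$, so the sequence splits.

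It remains to prove injectivity of $F|_{G_0}$, which is where the hyperelliptic structure enters and which I expect to be the main obstacle. Here I would use the hyperelliptic involution $\iota \in \Mod(\Sigma_{g,1})$, which fixes $p$ and preserves each isotopy class $c_i$ (see Figure \ref{fig:curves-isomorphisms}), so that $\iota T_{c_i} \iota^{-1} = T_{c_i}$ and hence $\iota$ centralizes $G_0$. Let $\varphi \in \ker(F|_{G_0})$; by the Birman exact sequence $\varphi = \mathrm{Push}(\alpha)$ for a unique $\alpha \in \pi_1(\Sigma_g, p)$. Equivariance of $\mathrm{Push}$ gives $\mathrm{Push}(\iota_* \alpha) = \iota\, \mathrm{Push}(\alpha)\, \iota^{-1} = \mathrm{Push}(\alpha)$, so $\iota_* \alpha = \alpha$. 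Since $\iota_*$ acts freely on $\pi_1(\Sigma_g, p) \setminus \{1\}$ — a standard consequence of Birman--Hilden theory, equivalently of the fact that the centralizer in $\pi_1^{\mathrm{orb}}(\Sigma_g/\iota)$ of the order-two deck element meets $\pi_1(\Sigma_g, p)$ trivially (cf.\ \cite{brendle-margalit}) — we conclude $\alpha = 1$ and $\varphi = 1$.

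The delicate points in this last step are the equivariance of $\mathrm{Push}$ with respect to $\iota$ (which requires $\iota$ to be represented by a homeomorphism fixing $p$, valid since $p$ is a Weierstrass point) and the freeness of $\iota_*$ on $\pi_1(\Sigma_g,p) \setminus \{1\}$; both are standard, and alternatively one may invoke directly the Birman--Hilden-type statement that the hyperelliptic mapping class group of $\Sigma_g$ with a marked Weierstrass point lifts isomorphically to $\Mod(\Sigma_{g,1})$, which is exactly the assertion that $F|_{G_0}$ is injective. Everything else in the proof is elementary group theory together with the Birman exact sequence \eqref{eqn:birman}.
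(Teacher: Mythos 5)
Your proof is correct and follows essentially the same route as the paper: write each element of $G_k$ as a product of $G_0$-conjugates of $P^{\pm 1}$ times an element of $G_0$ (so $G_k = H_k G_0$), and then use injectivity of $\mathrm{Forget}|_{G_0}$ to identify $\ker(\mathrm{Forget}|_{G_k})$ with $H_k$ and to split the sequence. The only difference is that the paper obtains the injectivity of $\mathrm{Forget}|_{G_0}$ by citing Brendle--Margalit \cite[Theorem 3.1]{brendle-margalit} (since $G_0 \leq \SMod(\Sigma_{g,1})$), whereas you unpack that citation into a direct argument via the centralizer of the hyperelliptic involution and the freeness of $\iota_*$ on $\pi_1(\Sigma_g,p)\setminus\{1\}$ --- which is exactly the content of the cited result, so the two proofs coincide in substance.
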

\begin{proof}
Denote $P_{\gamma^{-k}(\gamma_{2g}\gamma)^k}$ by $P$. Take any $f \in G_k$ and write
\[
	f = g_1  P^{m_1}  g_2  \dots  g_r  P^{m_r}  g_{r+1} \in G_k = \langle G_0, \, P \rangle
\]
for some $m_1, \dots, m_r \in \Z$ and $g_1, \dots, g_{r+1} \in G_0$. By repeatedly applying the identity
\[
	g P^{m}  g' = (g P^{m} g^{-1})(g g')
\]
for any $g, g' \in G_0$, we may move all factors $P^{m_i}$ to the left after possibly first conjugating them by elements of $G_0$. In other words, $G_k = H_k G_0$.

Let $\iota \in \Diff^+(\Sigma_{g,1})$ be the hyperelliptic involution shown in Figure \ref{fig:curves-isomorphisms}. Then $G_0$ is contained in the hyperelliptic mapping class group $\SMod(\Sigma_{g,1}) \leq \Mod(\Sigma_{g,1})$ determined by $\iota$, so the restriction of the forgetful map $\Mod(\Sigma_{g,1})\to \Mod(\Sigma_g)$ to the subgroup $G_0 \leq \Mod(\Sigma_{g,1})$ is injective by a result of Brendle--Margalit \cite[Theorem 3.1]{brendle-margalit}. Identifying $G_0$ with its image in $\Mod(\Sigma_g)$, the restriction of the Birman exact sequence (\ref{eqn:birman}) to $G_k \leq \Mod(\Sigma_{g,1})$ is
\[
	1 \to \pi_1(\Sigma_g,p) \cap G_k \to G_k \to G_0 \to 1.
\]
There is an inclusion $H_k \subseteq \pi_1(\Sigma_g, p)$ by construction. Conversely, take any $f \in \pi_1(\Sigma_g,p) \cap G_k$ and write $f = h g_0$ for some $h \in H_k$, $g_0 \in G_0$. Then $f$ is an element of $H_k$ because $g_0 = h^{-1} f$ is contained in $\pi_1(\Sigma_g, p)\cap G_0 = 1$
\end{proof}

\begin{proof}[Proof of Lemma \ref{lem:not-conjugate-monodromy}]
Identify $\pi_1(\Sigma_g,p)$ with its image under the homomorphism $\mathrm{Push}: \pi_1(\Sigma_g, p)\to \Mod(\Sigma_{g,1})$ in the Birman exact sequence (\ref{eqn:birman}) so that $P_{\gamma^{-k} (\gamma_{2g}\gamma)^k}$ is identified with $ (\gamma_{2g}\gamma)^{-k}\gamma^{k} \in \pi_1(\Sigma_g, p)$. Consider the abelianization map $h: \pi_1(\Sigma_g,p) \to H_1(\Sigma_g; \Z)$. The image $h(H_k)$ lies in $k H_1(\Sigma_g; \Z)$ because $h(f_*((\gamma_{2g}\gamma)^{-k}\gamma^{k}) = -k\left(h(f_*(\gamma_{2g}))\right)$ for any $f \in \Mod(\Sigma_{g,1})$. Moreover, $h(f_*(\gamma_{2g}))$ is a primitive element of $H_1(\Sigma_g; \Z)$ for any $f \in \Mod(\Sigma_{g,1})$ because $h(\gamma_{2g}) = \ell_{2g}$ is primitive. Therefore if $\lvert k_1 \rvert \neq \lvert k_2 \rvert$,
\[
	h(f H_{k_1} f^{-1}) = f_*(h(H_{k_1})) \neq h(H_{k_2}) \leq H_1(\Sigma_g; \Z).
\]
Finally, conclude by noting that $f H_{k_1} f^{-1} = \pi_1(\Sigma_g, p) \cap (f G_{k_1} f^{-1})$ by Lemma \ref{lem:ker-Gk}.
\end{proof}

\subsection{Non-isomorphisms via fiberwise diffeomorphisms covering the identity}

In this subsection we consider isomorphisms of pairs $(\pi, s_1)$ and $(\pi, s_2)$ via fiberwise diffeomorphisms covering the identity $\Id: S^2 \to S^2$, i.e. diffeomorphisms $\Psi \in \Diff^+(M)$ making the following diagram commute:
\begin{figure}[H]
\centering
\begin{tikzcd}
M \arrow[rd, "\pi"] \arrow[rr, "\Psi"] & & M \arrow[ld, "\pi"'] \\
& S^2 \arrow[ru, "s_2"', bend right] \arrow[lu, "s_1", bend left] &
\end{tikzcd}
\end{figure}
The following proposition shows that sections that are not isomorphic via such diffeomorphisms are common.
\begin{prop}\label{prop:covering-identity}
For any $g \geq 2$, let $\pi: M \to S^2$ be a genus-$g$ Lefschetz fibration with a section $s: S^2 \to M$. The untwisted fiber sum $\pi \#_F \pi: M \#_F M \to S^2$ yields infinitely many, homologically distinct sections
\[
	\{\sigma_k: S^2 \to M \#_F M : k \in \Z_{\geq 0}\}
\]
so that if $k_1 \neq k_2 \in \Z_{\geq 0}$ then $(\pi \#_F \pi, \sigma_{k_1})$ and $(\pi \#_F \pi, \sigma_{k_2})$ are not isomorphic via a fiberwise diffeomorphism $\Psi \in \Diff^+(M \#_F M)$ covering the identity of $S^2$.
\end{prop}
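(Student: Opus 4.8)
The plan is to apply Corollary \ref{cor:untwisted} to produce the sections $\sigma_k$ and then obstruct isomorphisms covering the identity by computing a diffeomorphism invariant that distinguishes the sections within a fixed fiber. First I would fix a nonseparating vanishing cycle $\ell_i$ of $\pi$ (which exists by Stipsicz \cite{stipsicz-vanishing-cycles}) together with a loop $\gamma \in \pi_1(\Sigma_g, p)$ meeting $\ell_i$ transversely once, apply Corollary \ref{cor:untwisted} to the pair $(\pi, s)$ to obtain the homologically distinct sections $\{\sigma_k : k \in \Z_{\geq 0}\}$ of $\pi \#_F \pi$, and record from Theorem \ref{thm:infinite-sections}\ref{thm:monodromy-factorization} that the monodromy factorization of $(\pi \#_F \pi, \sigma_k)$ with respect to a fixed identification $\Phi_b : ((\pi \#_F\pi)^{-1}(b), \sigma_k(b)) \to \Sigma_{g,1}$ is
\[
    \left(T_{\ell_r} \dots T_{\ell_1}\right)\left(T_{P_\gamma^k(\ell_r)} \dots T_{P_\gamma^k(\ell_1)}\right) = 1 \in \Mod(\Sigma_{g,1}).
\]

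The key observation is that if $\Psi \in \Diff^+(M \#_F M)$ is a fiberwise diffeomorphism covering $\Id_{S^2}$ with $\Psi \circ \sigma_{k_1} = \sigma_{k_2}$, then $\Psi$ restricts to a diffeomorphism of the fiber $(\pi\#_F\pi)^{-1}(b)$ carrying $\sigma_{k_1}(b)$ to $\sigma_{k_2}(b)$, and conjugating the monodromy representation $\rho_{(\pi\#_F\pi, \sigma_{k_1})}$ to $\rho_{(\pi\#_F\pi, \sigma_{k_2})}$ by the resulting element $\Psi_* \in \Mod(\Sigma_{g,1})$ — not merely up to Hurwitz moves, but with the same chosen generators $\gamma_j$ of $\pi_1(S^2 - \{q_1, \dots\}, b)$, since $\Psi$ covers the identity. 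In particular $\Psi_*$ must conjugate the \emph{ordered} factorization of $(\pi\#_F\pi, \sigma_{k_1})$ to that of $(\pi\#_F\pi, \sigma_{k_2})$, hence it conjugates each image subgroup to the other and, more strongly, $\Psi_*$ normalizes the subgroup $\langle T_{\ell_1}, \dots, T_{\ell_r}\rangle$ (the common left half) while sending $P_\gamma^{k_1}(\ell_j)$ to $P_\gamma^{k_2}(\ell_j)$ for all $j$. I would then extract from this the invariant: passing to the Birman sequence (\ref{eqn:birman}) and the abelianization $\pi_1(\Sigma_g,p) \to H_1(\Sigma_g;\Z)$ exactly as in the proof of Lemma \ref{lem:not-conjugate-monodromy}, the normal closure of $\{T_{\ell_j} T_{P_\gamma^k(\ell_j)}^{-1}\}$ inside the point-pushing subgroup has image landing in $k\cdot H_1(\Sigma_g;\Z)$ but not in $k' \cdot H_1(\Sigma_g;\Z)$ for $k' > k$ (using that $[\ell_i]$ is primitive since $\hat i([\ell_i],[\gamma]) = \pm 1$). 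Since $\Psi_*$ conjugation preserves this subgroup of $\pi_1(\Sigma_g,p)$ and commutes with abelianization up to an automorphism of $H_1(\Sigma_g;\Z)$, the divisibility $k$ is an invariant, forcing $k_1 = k_2$ (here using $k_1, k_2 \geq 0$ to drop absolute values).

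The main obstacle I anticipate is justifying rigorously that an isomorphism $\Psi$ covering the identity really does induce an honest conjugation of monodromy \emph{representations} (as homomorphisms on $\pi_1(S^2 \setminus \{q_j\}, b)$ with the \emph{same} basepoint, generators, and singular fiber identification), rather than something only well-defined up to Hurwitz equivalence: one must check that $\Psi$ preserves the singular values setwise, can be isotoped rel fibers to fix $b$ and the chosen vanishing paths, and that the induced $\Psi_* \in \Mod(\Sigma_{g,1})$ is well-defined independent of these isotopies. Once this is set up, the algebra is essentially the same computation as in Lemmas \ref{lem:ker-Gk} and \ref{lem:not-conjugate-monodromy} — indeed for the chain-relation example the subgroups $G_k$ there literally are the monodromy images — so the remaining steps are routine. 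A secondary point to handle carefully is that the left factor $(T_{\ell_r}\dots T_{\ell_1})$ is the \emph{same} word for all $k$, which is what pins $\Psi_*$ down enough to run the abelianization argument; this is exactly where the untwisted fiber sum structure (Corollary \ref{cor:untwisted}) is used.
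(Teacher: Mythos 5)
Your proposal is correct, and it shares with the paper the crucial structural step: a fiberwise diffeomorphism covering $\Id_{S^2}$ forces the \emph{ordered} monodromy factorizations of $(\pi\#_F\pi,\sigma_{k_1})$ and $(\pi\#_F\pi,\sigma_{k_2})$ to be globally conjugate term by term, yielding a single $f \in \Mod(\Sigma_{g,1})$ with $f(\ell_j) = \ell_j$ and $f(P_\gamma^{k_2}(\ell_j)) = P_\gamma^{k_1}(\ell_j)$ for all $j$. (The paper asserts this reduction with essentially the same brevity you do, so the ``main obstacle'' you flag is not treated in more detail there either.) Where you genuinely diverge is the final invariant. The paper chooses $\ell_1$ nonseparating, puts it and $\gamma$ in a standard position, and shows by an explicit bigon-criterion casework that $i(P_\gamma^k(\ell_1),\ell_1) = 2k$; since $f$ simultaneously fixes $\ell_1$ and carries $P_\gamma^{k_2}(\ell_1)$ to $P_\gamma^{k_1}(\ell_1)$, the geometric intersection number forces $k_1 = k_2$. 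You instead observe that $T_{\ell_1}T_{P_\gamma^{k}(\ell_1)}^{-1} = T_{\ell_1}P_\gamma^k T_{\ell_1}^{-1}P_\gamma^{-k}$ is a point-push whose image under $\pi_1(\Sigma_g,p) \to H_1(\Sigma_g;\Z)$ is $\pm k\,\hat i([\gamma],[\ell_1])[\ell_1] = \pm k[\ell_1]$, a $k$-multiple of a primitive class; since conjugation by $f$ carries the $k_2$-element to the $k_1$-element and acts on $H_1$ by an automorphism fixing $[\ell_1]$ up to sign, divisibility gives $k_1 = |{\pm}k_2| = k_2$ for $k_1,k_2 \geq 0$. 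This is exactly the mechanism of Lemmas \ref{lem:ker-Gk} and \ref{lem:not-conjugate-monodromy}, recycled at the level of a single element rather than a monodromy subgroup; it buys you a shorter, purely algebraic conclusion that avoids the change-of-coordinates normalization and the minimal-position casework, at the cost of only detecting $|k|$ (which suffices here, as you note, and is the same limitation the paper's homological Lemma \ref{lem:not-conjugate-monodromy} has). Your invocation of a ``normal closure'' is an unnecessary elaboration --- tracking the single element $T_{\ell_1}T_{P_\gamma^{k}(\ell_1)}^{-1}$ for the chosen $\ell_1$ with $\hat i([\gamma],[\ell_1]) = \pm 1$ already closes the argument --- but it is not an error.
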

\begin{proof}
Let $T_{\ell_r} \dots T_{\ell_{1}} = 1 \in \Mod(\Sigma_{g,1})$ be a monodromy factorzation of the pair $(\pi, s)$. By work of Stipsicz \cite[Theorem 1.3]{stipsicz-vanishing-cycles}, $\pi: M \to S^2$ has a nonseparating vanishing cycle. By the change-of-coordinates principle \cite[Section 1.3]{farb-margalit} and after possibly applying some Hurwitz moves, we may assume that $\ell_1 \subseteq \Sigma_{g,1}$ is the curve shown in Figure \ref{fig:geom-intersection}. Pick $\gamma \in \pi_1(\Sigma_g, p)$ to intersect $\ell_1$ exactly once as shown in Figure \ref{fig:geom-intersection}.
\begin{figure}
\centering
\includegraphics[width=0.3\textwidth]{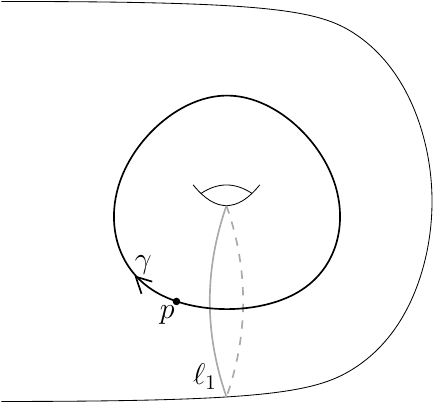}\qquad
\includegraphics[width=0.3\textwidth]{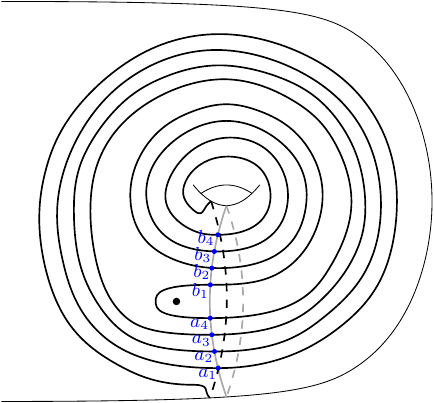}\qquad
\includegraphics[width=0.3\textwidth]{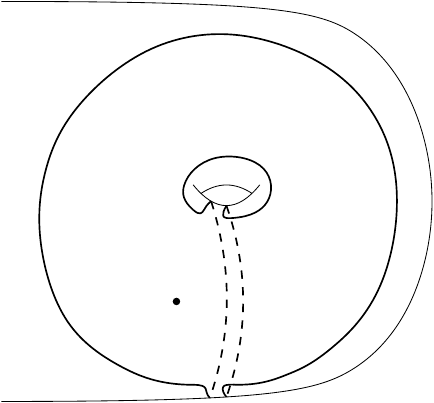}
\caption{Left: The curve $\ell_1 \subseteq \Sigma_{g,1}$ and loop $\gamma \in \pi_1(\Sigma_g, p)$ intersect once transversely as shown. Middle: The case of $k = 4$. The curves $P_\gamma^4(\ell_1)$ (in black) and $\ell_1$ (in grey) intersect in the points $a_1, \dots, a_k, b_1, \dots, b_k$. Right: The shown curve is isotopic to the curve $\alpha' \cup \beta'$ formed in case (c) of the proof of Proposition \ref{prop:covering-identity}.}\label{fig:geom-intersection}
\end{figure}

For any $k \geq 0$, consider the curves $\alpha$ and $\beta$ representing the isotopy classes $P_\gamma^k(\ell_1)$ and $\ell_1$ respectively and the intersection points $a_1, \dots, a_k$, $b_1, \dots, b_k \in \Sigma_{g,1}$ as shown in Figure \ref{fig:geom-intersection}. Consider pairs of intersection points that are adjacent in both $\alpha$ and $\beta$, and let $\alpha' \subseteq \alpha$ and $\beta' \subseteq \beta$ be the subarcs connecting them.
\begin{enumerate}[(a)]
\item If $\partial \alpha' = \partial \beta' = \{a_k, b_1\}$ then $\alpha' \cup \beta'$ bounds a punctured disk $S$. The complement $\Sigma_{g,1}- S$ is homeomorphic to the punctured surface $\Sigma_{g,1}$.
\item If $\partial \alpha' =\partial \beta' = \{a_i, a_{i+1}\}$ or $\{b_i, b_{i+1}\} $ then $\alpha' \cup \beta'$ is a nonseparating curve in $\Sigma_{g,1}$.
\item If $\partial \alpha' =\partial \beta' = \{a_1, b_k\}$ then $\alpha' \cup \beta'$ is a separating curve in $\Sigma_{g,1}$ shown in Figure \ref{fig:geom-intersection}. The subsurface $S \subseteq \Sigma_{g,1}$ bounded by $\alpha' \cup \beta'$ containing the marked point $p$ deformation retracts onto $\gamma \cup \ell_1$, and so $S$ is homeomorphic to $\Sigma_{1,1}^1$. The complement $\Sigma_{g,1} - S$ is then homeomorphic to $\Sigma_{g-1, 1}$.
\end{enumerate}
The casework above shows that $\alpha$ and $\beta$ do not form any bigons in $\Sigma_{g,1}$. By the bigon criterion \cite[Proposition 1.7]{farb-margalit}, the curves $\alpha$ and $\beta$ are in minimal position and the geometric intersection number $i(P_\gamma^k(\ell_1), \ell_1)$ is
\[
	i(P_\gamma^k(\ell_1), \ell_1) = \#(\alpha \cap \beta) = 2k.
\]

On the other hand, let $\sigma_k: S^2 \to M \#_F M$ be the section of $\pi \#_F \pi: M \#_F M \to S^2$ constructed in Corollary \ref{cor:untwisted} with monodromy factorization
\[
	\left(T_{\ell_r} \dots T_{\ell_1}\right) \left(T_{P_\gamma^k(\ell_r)} \dots T_{P_\gamma^k(\ell_1)} \right)  = 1 \in \Mod(\Sigma_{g,1}).
\]

If $\Psi \circ \sigma_{k_1} = \sigma_{k_2}$ for some fiberwise diffeomorphism $\Psi \in \Diff^+(M \#_F M)$ and $k_1, k_2 \in \Z_{\geq 0}$ then the monodromy factorizations of $(\pi \#_F\pi, \sigma_{k_1})$ and $(\pi\#_F \pi,\sigma_{k_2})$ are conjugate in $\Mod(\Sigma_{g,1})$, i.e. there exists $f \in \Mod(\Sigma_{g,1})$ so that the following equality of positive factorizations holds:
\[
	\left(T_{\ell_r} \dots T_{\ell_1}\right) \left(T_{P_\gamma^{k_1}(\ell_r)} \dots T_{P_\gamma^{k_1}(\ell_1)} \right) = \left((fT_{\ell_r}f^{-1}) \dots (fT_{\ell_1}f^{-1})\right) \left((f T_{P_\gamma^{k_2}(\ell_r)} f^{-1}) \dots (fT_{P_\gamma^{k_2}(\ell_1)}f^{-1}) \right) .
\]
In particular, there are equalities of isotopy classes of curves in $\Sigma_{g,1}$
\[
	P_\gamma^{k_1}(\ell_1) = f(P_\gamma^{k_2}(\ell_1)), \qquad \ell_1 = f(\ell_1).
\]
Then $k_1 = k_2$ because
\[
	2 k_1 = i(P_\gamma^{k_1}(\ell_1), \ell_1) = i(P_\gamma^{k_2}(\ell_1), \ell_1) = 2k_2. \qedhere
\]
\end{proof}

\bibliographystyle{alpha}
\bibliography{sections}

\begin{thebibliography}{McM00}

\bibitem[Ara71]{arakelov}
S.~Ju. Arakelov.
\newblock Families of algebraic curves with fixed degeneracies.
\newblock {\em Izv. Akad. Nauk SSSR Ser. Mat.}, 35:1269--1293, 1971.

\bibitem[Aur05]{auroux}
Denis Auroux.
\newblock A stable classification of {L}efschetz fibrations.
\newblock {\em Geom. Topol.}, 9:203--217, 2005.

\bibitem[Bay12]{baykur-noncomplex}
R.~\.{I}nan\c{c} Baykur.
\newblock Non-holomorphic surface bundles and {L}efschetz fibrations.
\newblock {\em Math. Res. Lett.}, 19(3):567--574, 2012.

\bibitem[BH16]{baykur-hayano}
R.~\.{I}nan\c{c} Baykur and Kenta Hayano.
\newblock Hurwitz equivalence for {L}efschetz fibrations and their
  multisections.
\newblock In {\em Real and complex singularities}, volume 675 of {\em Contemp.
  Math.}, pages 1--24. Amer. Math. Soc., Providence, RI, 2016.

\bibitem[BKS24]{baykur-korkmaz-simone}
R.~\.{I}nanç Baykur, Mustafa Korkmaz, and Jonathan Simone.
\newblock {Geography of symplectic Lefschetz fibrations and rational
  blow-downs}.
\newblock {\em Trans. Amer. Math. Soc., to appear}, 2024.
\newblock arXiv:2201.11728.

\bibitem[BM13]{brendle-margalit}
Tara~E. Brendle and Dan Margalit.
\newblock Point pushing, homology, and the hyperelliptic involution.
\newblock {\em Michigan Math. J.}, 62(3):451--473, 2013.

\bibitem[Don99]{donaldson}
S.~K. Donaldson.
\newblock Lefschetz pencils on symplectic manifolds.
\newblock {\em J. Differential Geom.}, 53(2):205--236, 1999.

\bibitem[EN05]{endo-nagami}
Hisaaki Endo and Seiji Nagami.
\newblock Signature of relations in mapping class groups and non-holomorphic
  {L}efschetz fibrations.
\newblock {\em Trans. Amer. Math. Soc.}, 357(8):3179--3199, 2005.

\bibitem[End00]{endo}
Hisaaki Endo.
\newblock Meyer's signature cocycle and hyperelliptic fibrations.
\newblock {\em Math. Ann.}, 316(2):237--257, 2000.

\bibitem[FM12]{farb-margalit}
Benson Farb and Dan Margalit.
\newblock {\em A primer on mapping class groups}, volume~49 of {\em Princeton
  Mathematical Series}.
\newblock Princeton University Press, Princeton, NJ, 2012.

\bibitem[Gom05]{gompf-symp}
Robert~E. Gompf.
\newblock Locally holomorphic maps yield symplectic structures.
\newblock {\em Comm. Anal. Geom.}, 13(3):511--525, 2005.

\bibitem[Gra65]{grauert}
Hans Grauert.
\newblock Mordells {V}ermutung \"uber rationale {P}unkte auf algebraischen
  {K}urven und {F}unktionenk\"orper.
\newblock {\em Inst. Hautes \'Etudes Sci. Publ. Math.}, (25):131--149, 1965.

\bibitem[GS99]{gompf-stipsicz}
Robert~E. Gompf and Andr\'as~I. Stipsicz.
\newblock {\em {$4$}-manifolds and {K}irby calculus}, volume~20 of {\em
  Graduate Studies in Mathematics}.
\newblock American Mathematical Society, Providence, RI, 1999.

\bibitem[Hay12]{hayano}
Kenta Hayano.
\newblock A note on sections of broken {L}efschetz fibrations.
\newblock {\em Bull. Lond. Math. Soc.}, 44(4):823--836, 2012.

\bibitem[IS88]{imayoshi-shiga}
Y\^oichi Imayoshi and Hiroshige Shiga.
\newblock A finiteness theorem for holomorphic families of {R}iemann surfaces.
\newblock In {\em Holomorphic functions and moduli, {V}ol. {II} ({B}erkeley,
  {CA}, 1986)}, volume~11 of {\em Math. Sci. Res. Inst. Publ.}, pages 207--219.
  Springer, New York, 1988.

\bibitem[Kod67]{kodaira}
K.~Kodaira.
\newblock A certain type of irregular algebraic surfaces.
\newblock {\em J. Analyse Math.}, 19:207--215, 1967.

\bibitem[Kor01]{korkmaz}
Mustafa Korkmaz.
\newblock Noncomplex smooth 4-manifolds with {L}efschetz fibrations.
\newblock {\em Internat. Math. Res. Notices}, (3):115--128, 2001.

\bibitem[Man63]{manin}
Ju.~I. Manin.
\newblock Proof of an analogue of {M}ordell's conjecture for algebraic curves
  over function fields.
\newblock {\em Dokl. Akad. Nauk SSSR}, 152:1061--1063, 1963.

\bibitem[McM00]{mcmullen}
Curtis~T. McMullen.
\newblock From dynamics on surfaces to rational points on curves.
\newblock {\em Bull. Amer. Math. Soc. (N.S.)}, 37(2):119--140, 2000.

\bibitem[MS17]{mcduff-salamon}
Dusa McDuff and Dietmar Salamon.
\newblock {\em Introduction to symplectic topology}.
\newblock Oxford Graduate Texts in Mathematics. Oxford University Press,
  Oxford, third edition, 2017.

\bibitem[OS00]{ozbaci-stipsicz}
Burak Ozbagci and Andr\'as~I. Stipsicz.
\newblock Noncomplex smooth {$4$}-manifolds with genus-{$2$} {L}efschetz
  fibrations.
\newblock {\em Proc. Amer. Math. Soc.}, 128(10):3125--3128, 2000.

\bibitem[Par68]{parshin}
A.~N. Par\v{s}in.
\newblock Algebraic curves over function fields. {I}.
\newblock {\em Izv. Akad. Nauk SSSR Ser. Mat.}, 32:1191--1219, 1968.

\bibitem[Smi01]{smith}
Ivan Smith.
\newblock Geometric monodromy and the hyperbolic disc.
\newblock {\em Q. J. Math.}, 52(2):217--228, 2001.

\bibitem[ST05]{siebert-tian}
Bernd Siebert and Gang Tian.
\newblock On the holomorphicity of genus two {L}efschetz fibrations.
\newblock {\em Ann. of Math. (2)}, 161(2):959--1020, 2005.

\bibitem[Sti99]{stipsicz-vanishing-cycles}
Andr\'as~I. Stipsicz.
\newblock On the number of vanishing cycles in {L}efschetz fibrations.
\newblock {\em Math. Res. Lett.}, 6(3-4):449--456, 1999.

\bibitem[Sti00]{stipsicz-chern-numbers}
Andr\'as~I. Stipsicz.
\newblock Chern numbers of certain {L}efschetz fibrations.
\newblock {\em Proc. Amer. Math. Soc.}, 128(6):1845--1851, 2000.

\bibitem[Sti01]{stipsicz-indec}
Andr\'as~I. Stipsicz.
\newblock Indecomposability of certain {L}efschetz fibrations.
\newblock {\em Proc. Amer. Math. Soc.}, 129(5):1499--1502, 2001.

\bibitem[Tau00]{taubes}
Clifford~Henry Taubes.
\newblock {$\rm SW\Rightarrow Gr$}: from the {S}eiberg-{W}itten equations to
  pseudo-holomorphic curves.
\newblock In {\em Seiberg {W}itten and {G}romov invariants for symplectic
  4-manifolds}, volume~2 of {\em First Int. Press Lect. Ser.}, pages 1--97.
  Int. Press, Somerville, MA, 2000.

\bibitem[Ush06]{usher}
Michael Usher.
\newblock Minimality and symplectic sums.
\newblock {\em Int. Math. Res. Not.}, pages Art. ID 49857, 17, 2006.

\bibitem[Voi07]{Voisin-HT2}
Claire Voisin.
\newblock {\em Hodge theory and complex algebraic geometry. {II}}, volume~77 of
  {\em Cambridge Studies in Advanced Mathematics}.
\newblock Cambridge University Press, Cambridge, english edition, 2007.
\newblock Translated from the French by Leila Schneps.

\end{thebibliography}

\end{document}